\title[Hölder Regularity of Distributional Volume Forms]{\bf \textsc{Hölder Regularity of \\ Distributional Volume Forms}}
\author{Thomas Jaffard}
\address{Sorbonne Université, Université Paris Cité, CNRS, Laboratoire de Probabilités, Statistique et Modélisation, LPSM, F-75005, France}
\date{\today}
\subjclass[2020]{Primary 46F10; Secondary 49Q15; 42C40.}
\keywords{Hölder regularity, distributional differential forms, Besov spaces, wavelets, fractional dimension}
\email{jaffard@lpsm.paris}
\begin{document}

\begin{abstract}
Let $f, g^1, \dots, g^d : \R^d \longrightarrow \R$ be Hölder continuous functions. If the Hölder exponents of these functions are less than $1$ but sufficiently large, we use the integral introduced by Züst in \cite{Zu11} to construct  a distribution, denoted by
$\fdgd,$ which depends continuously on the functions $f, g^1, \dots, g^d$ in a sense that we shall  specify, and which coincides with the function $f\det(\dg)$
when the functions $g^i$ are Lipschitz.
We show that this distribution  is entirely characterized by these properties and determine its Hölder regularity.

We use this distribution to define the integral $
\int_{\Omega} \fdgd$
by duality, for general domains $\Omega \subset \R^d$.
When $\Omega$ is a rectangle, this integral coincides with Züst’s construction. 
We then establish a new criterion on the domain $\Omega$ ensuring that the integral is well defined.
This criterion allows to recover a condition of Bouafia \citep{BOU24}  on the perimeter of the domain, and in the case when $d = 2$, the condition of Alberti–Stepanov–Trevisan \citep{AlSTTRE24} on the upper box dimension of the boundary.
\end{abstract}

\maketitle
\setcounter{tocdepth}{1}
\tableofcontents

\section{Introduction}

\subsection{Presentation of the problem and Züst's integral}

Let $d \ge 1$ be an integer. We consider real-valued functions $f, g^1, \dots, g^d$ defined on $\R^d$, and we denote by $g$ the function 
$$
\begin{array}{rrcl}
g : & \R^d & \longrightarrow & \R^d  \\
& x & \longmapsto & (g^1(x), \dots, g^d(x)).
\end{array}
$$
Assuming that the function $f$ is continuous and that the functions $g^i$ are of class $\C^1$, the quantity $\fdgd$ is a continuous $d$-form on $\R^d$, defined by
$$
\fdgd = f\det(\dg) \dx^1 \wedge \dots \wedge \dx^d.
$$
The integral of this differential form over the unit cube in $\R^d$ is given by  
\begin{equation}
\label{eq : intégrale sur [0,1]^d de fdg1^..^dgd dans le cas lisse}
\int_{[0,1]^d} \fdgd = \int_{[0,1]^d} f(x)\det(\dg(x)) \dx,
\end{equation} 
and it can be integrated over any $d$-cell $c : [0,1]^d \longrightarrow \R^d$ of class $\C^1$ by pulling it back:
$$
\int_c \fdgd = \int_{[0,1]^d} (f\circ c)\d(g^1\circ c) \wedge \dots \wedge \d(g^d\circ c).
$$
We can then, for instance, extend this integral into a cochain defined on the set of $\C^1$ $d$-chains in $\R^d$, or, in more general geometric contexts, consider the case where the functions are defined on an oriented $\C^1$ manifold of dimension $d$.

Adopting now a distributional point of view on the object $\fdgd$, we can identify it with the continuous function $f\det(\dg)$ and use Lebesgue integration theory to give meaning to the integrals
$$
\int_{\Omega} f(x)\det(\dg(x)) \dx, \quad \Omega \text{ bounded Borel subset of } \R^d.
$$
Note that it is still possible to make sense of these integrals under weaker regularity assumptions. Indeed, if the functions $g^i$ are only assumed to be locally Lipschitz, a theorem of Lebesgue asserts that the function $g$ is differentiable almost everywhere and that its Jacobian is locally bounded. It is then sufficient to assume that the function $f$ is locally integrable for these integrals to be well-defined.

When the function $g$ is no longer differentiable, it is not clear to give a meaning to the distribution $\fdgd$, or even to $\dgd$. In \citep{BRNG11}, Brezis and Nguyen study this problem and provide optimal results for defining the distribution 
$$ \det(\nabla g)  = \det(\dg) = \dg^1 \wedge \dots \wedge \dg^d $$
on a regular open bounded subset of $\R^d$, 
when the function $g$ belongs to a fractional Sobolev space. To establish their results, they show that the mapping $g \mapsto \det(\nabla g)$ is continuous with respect to a Sobolev norm on the space $\C^1(\overline\Omega)$ and extend this mapping by density.
In \cite{Zu11}, Züst also addresses this problem but adopts a different approach. Rather than constructing a distribution $\fdgd$, the integrals
$$\int \fdgd$$
 are defined directly when
the functions $f, g^1, \dots, g^d$ are Hölder continuous but not Lipschitz. If the sum of their Hölder exponents is sufficiently large, he manages to define the above integral over a rectangle, or more generally over the image of a rectangle under a Lipschitz mapping. We recall below the part of his work we will need in the sequel.

Let $\alpha, \beta_1, \dots, \beta_d$ be real numbers satisfying
\begin{equation}
\label{eq : alpha + beta > d}
\alpha, \beta_1, \dots, \beta_d \in (0,1) \quad \text{and} \quad \alpha + \sum_{i=1}^d \beta_i > d.
\end{equation}
Let $a = (a_1, \dots, a_d)$ and $b = (b_1, \dots, b_d)$ in $\R^d$ such that $a_i \le b_i$ for each $i \in \{1, \dots, d\}$.
 We denote by $R$ the rectangle $\prod_{i=1}^d [a_i, b_i]$. 
The space $\C^{\alpha}(R)$ consists of real-valued functions $f$ on the rectangle $R$ that are $\alpha$-Hölder continuous, i.e. such that the quantity
$$
\|f\|_{\alpha,R} = \sup_{\substack{x,y \in R \\ x \neq y}}
\;\frac{|f(y) - f(x)|}{|y - x|^{\alpha}}
$$
is finite. Recall that for any real number $\alpha' \in (0,\alpha]$, we have the continuous embedding
 $$\C^{\alpha}(R) \hookrightarrow C^{\alpha'}(R).$$
For every $(d+1)$-tuple of functions 
$$ (f, g^1, \dots, g^d) \in \Calpha(R) \times \C^{\beta_1}(R) \times \dots \times \C^{\beta_d}(R),$$
Züst constructs the integral
$$ \int_R \fdgd $$
by means of the discrete approximation
$$ \int_R \fdgd \approx f(a_1, \dots, a_d) \int_R \dg^1 \wedge \dots \wedge \dg^d, $$
inspired by Stokes’ formula
$$ \int_R \dg^1 \wedge \dots \wedge \dg^d = \int_{\partial R} g^1 \dg^2 \wedge \cdots \wedge \dg^d, $$
where the right-hand side is defined as a signed sum of $2^{d+1}$ well-defined integrals in dimension $d-1.$
More precisely, he proves the following theorem.

\begin{theo}[Züst]
\label{theo : intégrale de Zust}
Under assumption \eqref{eq : alpha + beta > d} on the exponents $\alpha, \beta_1, \dots, \beta_d$, there exists a unique $(d+1)$-linear mapping
$$
\begin{array}{rrcl}
\displaystyle\int_{R} : & \Calpha(R) \times \C^{\beta_1}(R) \times \dots \times \C^{\beta_d}(R) & \longrightarrow & \R  \\
& (f, g^1, \dots, g^d) & \longmapsto & \displaystyle\int_{R} f \dg^1 \wedge \dots \wedge \dg^d
\end{array}
$$
satisfying the following properties:
\begin{enumerate}[label=\arabic*.]
\item If the functions $g^1, \dots, g^d$ are Lipschitz on $R$, then 
\[
\int_{R} \fdgd = \int_{R} f(x)\,\det(\dg(x)) \dx.
\]
\item The mapping $\int_{R}$ is continuous in the following sense: for all sequences of functions $(f_n)_{n\in\N}$ and $(g^i_n)_{n\in\N}$, $i \in \{1,\dots,d\}$, defined on $R$ such that
\begin{enumerate}
\item the sequences $(f_n)_{n\in\N}$ and $(g^i_n)_{n\in\N}$ converge uniformly to $f$ and $g^i$, respectively;
\item $\sup_{n \in \N} \big(\|f_n\|_{\alpha} + \|g^i_n\|_{\beta_i}\big) < \infty$,
\end{enumerate}
we have
\[
\lim_{n\to \infty} \int_{R} f_n \dg^1_n \wedge \dots \wedge \dg^d_n
= \int_{R} \fdgd.
\]
\end{enumerate}
\end{theo}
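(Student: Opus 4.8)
The plan is to establish uniqueness first, then existence by induction on the dimension $d$; the induction carries an extra a priori estimate that is never stated in the theorem but is what makes the argument close.

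\textbf{Uniqueness.} Suppose $\int_R$ and a second $(d+1)$-linear map $T$ both satisfy properties 1 and 2. Given Hölder data $(f,g^1,\dots,g^d)$ on $R$, extend each function to $\R^d$ without enlarging its Hölder seminorm --- a McShane-type extension in the relevant snowflaked metric $(x,y)\mapsto|x-y|^{\beta_i}$ does this --- mollify to get $f_n=f*\rho_{1/n}$ and $g^i_n=g^i*\rho_{1/n}$, and restrict to $R$. These functions are $\C^\infty$ (hence Lipschitz on $R$), converge uniformly to $f$ and the $g^i$, and have uniformly bounded Hölder seminorms. Property 1 gives $\int_R f_n\,\dg^1_n\wedge\cdots\wedge\dg^d_n=\int_R f_n\det(\dg_n)\,\dx=T(f_n,g^1_n,\dots,g^d_n)$, and property 2 forces both sides to the same limit, so $\int_R=T$.

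\textbf{Existence: induction on the dimension.} I would prove existence \emph{together with the a priori estimate} (writing $a_R$ for the corner $(a_1,\dots,a_d)$ of $R$)
\[
\Big|\int_R\fdgd-f(a_R)\int_R\dgd\Big|\le C(d)\,\|f\|_\alpha\Big(\prod_{i=1}^d\|g^i\|_{\beta_i}\Big)(\diam R)^{\alpha+\beta_1+\cdots+\beta_d},
\]
by induction on $d$. For $d=1$ this is Young's integral: $\int_{[a,b]}f\,\dg$ is the limit of Riemann--Stieltjes sums when $\alpha+\beta>1$, is bilinear, equals $\int f g'$ for Lipschitz $g$, is continuous in the sense of property 2, and satisfies the displayed bound. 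Assume the statement in dimension $d-1$ and fix $d\ge2$ and exponents as in \eqref{eq : alpha + beta > d}. For a sub-rectangle $Q\subseteq R$ I set $\mu(Q)=\int_{\partial Q}g^1\,\dg^2\wedge\cdots\wedge\dg^d$, the signed sum over the $2d$ faces of $Q$ of the $(d-1)$-dimensional integrals furnished by the induction hypothesis; this is legitimate because the exponents $\beta_1$ (for $g^1$ restricted to a face), $\beta_2,\dots,\beta_d$ all lie in $(0,1)$ and $\beta_1+\cdots+\beta_d>d-1$ since $\alpha<1$. The $(d-1)$-dimensional a priori bound applied on each face gives $|\mu(Q)|\le C(d)\big(\prod_i\|g^i\|_{\beta_i}\big)(\diam Q)^{\beta_1+\cdots+\beta_d}$. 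Since the lower-dimensional integrals are continuous under uniform convergence with bounded Hölder seminorms, so is $Q\mapsto\mu(Q)$; mollifying the $g^i$ and using the classical Stokes formula then identifies $\mu(Q)$ with $\int_Q\det(\dg)\,\dx$ when $g\in\C^1$, and in particular shows that $\mu$ is additive over adjacent sub-rectangles.

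\textbf{The Riemann sums.} Now fix $f\in\Calpha(R)$. Let $\pi_k$ be the partition of $R$ into $2^{kd}$ congruent sub-rectangles, write $a_P$ for the corner of $P$ analogous to $a_R$, and set $S_k=\sum_{P\in\pi_k}f(a_P)\,\mu(P)$. Additivity of $\mu$ gives $S_{k+1}-S_k=\sum_{P\in\pi_k}\sum_{P'\subseteq P}\big(f(a_{P'})-f(a_P)\big)\,\mu(P')$, and bounding $|f(a_{P'})-f(a_P)|\le\|f\|_\alpha(\diam P)^\alpha$, $|\mu(P')|\le C\prod_i\|g^i\|_{\beta_i}(\diam P')^{\beta_1+\cdots+\beta_d}$, and counting $2^{(k+1)d}$ pairs $(P,P')$ yields
\[
|S_{k+1}-S_k|\le C'\,\|f\|_\alpha\Big(\prod_i\|g^i\|_{\beta_i}\Big)(\diam R)^{\alpha+\beta_1+\cdots+\beta_d}\,2^{-k(\alpha+\beta_1+\cdots+\beta_d-d)}.
\]
Because $\alpha+\beta_1+\cdots+\beta_d-d>0$, the series $\sum_k(S_{k+1}-S_k)$ converges; I define $\int_R\fdgd:=\lim_k S_k$. (This is a hands-on two-scale ``sewing'' estimate; alternatively one can invoke a multi-parameter sewing lemma.) Summing the same bound from scale $0$ to $\infty$ produces the a priori estimate displayed above, closing the induction. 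The map is $(d+1)$-linear since each $S_k$ is ($\mu$ being multilinear in $(g^1,\dots,g^d)$ by induction and $S_k$ linear in $f$). For property 1: when $g\in\C^1$ one has $\mu(P)=\int_P\det(\dg)\,\dx$, hence $S_k=\sum_P f(a_P)\int_P\det(\dg)\,\dx\to\int_R f\det(\dg)\,\dx$ by uniform continuity of $f$; the case of merely Lipschitz $g$ follows by one more mollification together with $L^1$-convergence of $\det(\dg_n)$. For property 2: all constants above depend only on the Hölder seminorms, so the estimates are uniform along the given sequences; for fixed $k$, $S_k$ at $(f_n,g^1_n,\dots,g^d_n)$ converges to its value at $(f,g^1,\dots,g^d)$ by uniform convergence and the continuity of $\mu$; a three-$\epsilon$ argument using the uniform Cauchy estimate on the tails $\sum_{j\ge k}(S_{j+1}-S_j)$ then gives the claimed convergence of the integrals.

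\textbf{Main difficulty.} I expect the a priori bound on $\mu$ and the two-scale estimate to be the heart of the matter; everything else is bookkeeping. These steps rely on having arranged the induction so that the exponents restricted to faces still satisfy \eqref{eq : alpha + beta > d} one dimension down, and on the fact that Stokes' formula is available only classically --- which is precisely why $\mu$ must first be approximated on the $\C^1$ locus and shown to be continuous and additive before it can be used as an honest set function in the Riemann sums.
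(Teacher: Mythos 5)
The paper gives no proof of this theorem: it is quoted from Züst \citep{Zu11}, and the construction it sketches (germ $f(a_P)\,\mu(P)$ with $\mu$ obtained from a Stokes-type boundary reduction and an induction on the dimension, a dyadic sewing estimate, uniqueness via Lipschitz approximation with bounded Hölder seminorms) is exactly the one you propose. Your uniqueness argument, the two-scale estimate, the treatment of properties~1 and~2, and the multilinearity are all sound. There is, however, one genuine gap, and it sits precisely at the point you yourself identify as the heart of the matter.

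The sentence ``the $(d-1)$-dimensional a priori bound applied on each face gives $|\mu(Q)|\le C\prod_i\|g^i\|_{\beta_i}(\diam Q)^{\beta_1+\cdots+\beta_d}$'' is not a valid derivation. The estimate you carry through the induction only controls, on each face $F$, the deviation
\[
\int_F g^1\,\dg^2\wedge\cdots\wedge\dg^d \;-\; g^1(a_F)\int_F \dg^2\wedge\cdots\wedge\dg^d ,
\]
and the surviving leading term $g^1(a_F)\int_F \dg^2\wedge\cdots\wedge\dg^d$ is proportional to $\|g^1\|_\infty$, not to $\|g^1\|_{\beta_1}$, and carries no factor $(\diam Q)^{\beta_1}$; a face-by-face bound therefore cannot produce the claimed estimate (take $g^1$ a nonzero constant: then $\|g^1\|_{\beta_1}=0$ while the individual face integrals need not vanish, so $\mu(Q)=0$ only because of cancellation across opposite faces). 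To close this step you need two further ingredients: (i) the identity $\sum_F\pm\int_F\dg^2\wedge\cdots\wedge\dg^d=0$ (classical Stokes for smooth $g$, then pass to the limit), which lets you replace $g^1$ by $g^1-g^1(a_Q)$, whose sup norm on $Q$ is at most $\|g^1\|_{\beta_1}(\diam Q)^{\beta_1}$; and (ii) a \emph{second} inductive estimate, $\bigl|\int_F\dg^2\wedge\cdots\wedge\dg^d\bigr|\le C\prod_{i\ge2}\|g^i\|_{\beta_i}(\diam F)^{\beta_2+\cdots+\beta_d}$, which is not the deviation bound you carry but the ``volume bound'' for the $(d-1)$-form, admissible since $\beta_2+\cdots+\beta_d>d-2$, and itself proved by the same cancellation-plus-sewing mechanism one dimension further down. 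In short, the induction must carry two a priori estimates (the sewing bound for $\int_R \fdgd$ and the volume bound for $\int_R\dgd$), not one; with both in place the rest of your argument goes through as written.
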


The uniqueness of the mapping $\int_{R}$ follows from a general result in metric spaces which allows us to approximate uniformly any Hölder function by a sequence of Lipschitz functions bounded for the Hölder norm considered. We derive the following result from this property. Let us temporarily denote by $\int_{R, \alpha, \beta_1, \dots, \beta_d}$ the mapping defined in the framework of Theorem  \ref{theo : intégrale de Zust}. For all real numbers 
$$\alpha' \in (0, \alpha], \beta_1'\in (0, \beta_1], \dots, \beta_d'\in (0, \beta_d]$$ 
satisfying \eqref{eq : alpha + beta > d}, the restriction of 
${\int_{R,\alpha', \beta_1', \dots, \beta_d'}}$ to the space $\Calpha(R) \times \C^{\beta_1}(R) \times \dots \times \C^{\beta_d}(R)$ coincides with the mapping $\int_{R,\alpha, \beta_1, \dots, \beta_d}.$
This property will allow us to talk freely about the integral $\int_{R} \fdgd$ for all real-valued Hölder functions $f, g^1, \dots, g^d$ defined on $R$, as long as their Hölder exponents satisfy the condition \eqref{eq : alpha + beta > d}. With this clarification, we can now state the general sewing inequality proved by Züst during the construction of his integral.
For all real numbers $\alpha' \in (0,\alpha], \beta_1' \in (0,\beta_1], \dots, \beta_d' \in (0,\beta_d]$ satisfying \eqref{eq : alpha + beta > d}, the following estimate holds:
\begin{equation}
\label{eq : inégalité de couture}
\left| \int_{R} \fdgd - f(a_1,\dots,a_d) \int_{R} \dgd \right|
\le C\, \delta(R)^{\alpha' + \beta'}\, \|f\|_{\alpha',R} \prod_{i=1}^d \|g^i\|_{\beta_i',R},
\end{equation}
where 
$$ 
\beta' = \sum_{i=1}^d \beta_i' \quad \text{and} \quad \delta(R) = \max_{1 \le i \le d} (b_i - a_i),
$$ and the constant $C$ depends only on $d, \alpha', \beta_1', \dots, \beta_d'$.

The approximation result recalled above allows Züst to deduce from the smooth case, i.e. the case where the functions $g^i$ are Lipschitz, several natural properties satisfied by his integral. These include, for example, the antisymmetry of the integral with respect to the functions $g^i$, the fact that the integral vanishes if one of the $g^i$ is constant on the support of $f$, and the following additivity property which we will need to define our distribution.

\begin{prop}[Additivity over rectangles]
\label{prop : additivité sur les rectangles}
Let $P, Q \subset \R^d$ be two rectangles with disjoint interiors whose union $R = P \cup Q$ is a rectangle. Then,
$$\int_R \fdgd = \int_P \fdgd + \int_Q \fdgd.$$
\end{prop}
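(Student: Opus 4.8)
The plan is to reduce to the Lipschitz case and then let the continuity property of Züst's integral (property 2 of Theorem \ref{theo : intégrale de Zust}) do the work. First I would check the identity when the functions $g^1, \dots, g^d$ are Lipschitz on $R$: by property 1 of Theorem \ref{theo : intégrale de Zust}, each of $\int_R \fdgd$, $\int_P \fdgd$, $\int_Q \fdgd$ is then the ordinary Lebesgue integral of the locally bounded function $f\det(\dg)$ over the corresponding rectangle. Since $P$ and $Q$ have disjoint interiors, $P \cap Q$ is contained in an affine hyperplane, hence is $d$-dimensional Lebesgue-null, and additivity of the Lebesgue integral over sets overlapping in a null set gives the claim in this case.

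For general Hölder data $(f, g^1, \dots, g^d) \in \Calpha(R) \times \C^{\beta_1}(R) \times \dots \times \C^{\beta_d}(R)$ with exponents satisfying \eqref{eq : alpha + beta > d}, I would approximate. Using the approximation result recalled after Theorem \ref{theo : intégrale de Zust} — uniform approximation of Hölder functions by Lipschitz functions with Hölder seminorm bounded independently of $n$; concretely, one may Hölder-extend each function to $\R^d$ and mollify, an operation that does not increase the Hölder seminorm — choose Lipschitz functions $f_n \to f$ and $g^i_n \to g^i$, uniformly on $R$, with
$$M := \sup_{n \in \N}\Big(\|f_n\|_{\alpha,R} + \sum_{i=1}^d \|g^i_n\|_{\beta_i,R}\Big) < \infty.$$
Since $\|\cdot\|_{\alpha,R}$ is a supremum over pairs of points of $R$, restriction to a subrectangle can only decrease it; hence $\|f_n\|_{\alpha,P} \le M$, $\|g^i_n\|_{\beta_i,P} \le M$, and likewise on $Q$, while the restricted sequences still converge uniformly on $P$ and on $Q$. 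Thus the hypotheses of property 2 of Theorem \ref{theo : intégrale de Zust} are met simultaneously on $R$, on $P$ and on $Q$.

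Applying the Lipschitz case to the tuple $(f_n, g^1_n, \dots, g^d_n)$ gives, for every $n$,
$$\int_R f_n\,\dg^1_n \wedge \dots \wedge \dg^d_n = \int_P f_n\,\dg^1_n \wedge \dots \wedge \dg^d_n + \int_Q f_n\,\dg^1_n \wedge \dots \wedge \dg^d_n,$$
and letting $n \to \infty$, the three terms converge to $\int_R \fdgd$, $\int_P \fdgd$, $\int_Q \fdgd$ respectively by property 2 of Theorem \ref{theo : intégrale de Zust}, which proves the proposition. I do not expect a real obstacle: the only points requiring care are the existence of approximating sequences with uniformly bounded Hölder seminorms (the "general result in metric spaces" already invoked in the text) and the monotonicity of the seminorm under restriction, which is immediate from its definition. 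The proposition is essentially a soft consequence of the two defining properties of Züst's integral.
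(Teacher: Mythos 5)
Your proof is correct and follows exactly the route the paper indicates for this result: it is attributed to Züst and obtained by verifying the identity in the Lipschitz case via ordinary Lebesgue integration (where $P\cap Q$ is Lebesgue-null) and then transferring it to Hölder data through the uniform-approximation-with-bounded-seminorms result and the continuity property 2 of Theorem \ref{theo : intégrale de Zust}. Your observations that the Hölder seminorm only decreases under restriction to a subrectangle and that uniform convergence passes to $P$ and $Q$ are precisely the points needed to apply the continuity simultaneously on all three rectangles.
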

After showing that his integral is invariant under bi-Lipschitz reparametrization, Züst extends it to the case where the functions are defined on an oriented Lipschitz manifold of dimension $d.$ 
He then generalizes his previous constructions and shows that, under suitable assumptions, any current defined on a locally compact metric space extends uniquely to a mapping defined on a product of Hölder function spaces, provided that condition \eqref{eq : alpha + beta > d} is satisfied.

In dimension $1$, Züst's integral coincides with Young's integral \cite{YOU36,KON37}, which has regained interest with the introduction of rough path theory by Lyons \cite{LYO98}, and of the sewing lemma by Feyel–de La Pradelle \citep{FLP06} and Gubinelli \citep{GUB04}. 
These developments have motivated the generalization of Young’s integral to higher dimensions~$d.$

In \citep{AlSTTRE24}, in dimension $2$ and still under assumption \eqref{eq : alpha + beta > d} on the exponents $\alpha, \beta_1, \beta_2,$ the authors directly define the integral of $f \dg^1 \wedge \dg^2$ over any oriented triangle $[pqr] \subset \R^2$,
$$
\int_{[pqr]} f \dg^1 \wedge \dg^2,
$$
without first constructing the one-dimensional integral. Their construction relies on the notion of a \emph{germ}, namely a real-valued function defined on the set of triangles, playing the role of a discrete approximation of the integral. The integral is then defined as the limit of sums of germs evaluated over increasingly fine triangular subdivisions.  
Dimension $2$ plays a crucial role here, and it is not clear how to adapt their construction to oriented simplices of arbitrary dimension. Nevertheless, their definition of the integral in dimension $2$ is particularly robust: they show that several natural germs lead to the same integral, and that the way triangles are subdivided has little influence.  
Then they extend their integral by additivity to any oriented simple polygon, and more generally to any finite disjoint union of such polygons.
Finally, they establish a new condition on the upper box dimension of the boundary of a set $\Omega \subset \R^2$, namely
$$
\dimb(\partial \Omega) < \beta_1 + \beta_2,
$$
see Definition~\ref{defi : dim boite sup},
which allows one to define the integral
$$
\int_{\Omega} f \dg^1 \wedge \dg^2,
$$
from the integrals over the dyadic cubes intersecting $\Omega.$  
Stepanov and Trevisan also develop the notion of germs in great detail in \cite{STTR21}. In particular, they propose a generalization of the sewing lemma in dimension~$2$ and apply it to recover both Young’s integral and Züst’s integral.

In \citep{BOU24}, Bouafia also investigates the problem of generalizing Young’s integral to higher dimensions. He works on the cube $[0,1]^d$ and employs the concept of a \emph{charge}, which has already been successfully used in geometric measure theory to give meaning to certain integrals.  
A charge is a map that assigns a real number to each set of finite perimeter, is finitely additive, and satisfies a certain continuity property.  
Despite their apparent similarity, a charge is a different object from a measure. However, any measure with a density with respect to the Lebesgue measure is a charge, and in dimension $1$, it can be shown that every charge is the distributional derivative of a continuous function.  
Bouafia further develops this concept by introducing the notion of \emph{Hölder charge}, which quantifies the regularity of a charge, and studies the decomposition of such objects over a $d$-dimensional generalization of the Faber–Schauder basis.
He then uses this decomposition to construct the integral
$$
\int_{[0,1]^d} f \, \omega,
$$
when $f$ is a Hölder function and $\omega$ is a Hölder charge whose Hölder exponents sum to a sufficiently large value.  
He further associates to such a pair $(f,\omega)$ a new Hölder charge $f \cdot \omega$, which allows to extend the definition of his integral to any set of finite perimeter.  
Under assumption \eqref{eq : alpha + beta > d}, he shows that one can naturally associate to the functions $g^1, \dots, g^d$ a Hölder charge $\dg^1 \wedge \dots \wedge \dg^d$, which coincides with the measure $\det(\dg(x)) \, \d x$ when the functions $g^1, \dots, g^d$ are Lipschitz.  
This construction extends Züst’s integral to bounded sets of finite perimeter in arbitrary dimension.
This work is closely related to \cite{BouDeP25}, where De~Pauw and the previous author pursue the theoretical study of charges and apply their results to fractional Brownian sheets.

In \citep{ChSi24}, the authors consider a more general geometric setting, that of a $d$-dimensional manifold.  
For each integer $1 \le k \le d$, their goal is to give a rigorous meaning to the integral of a \emph{distributional $k$-form} over a smooth submanifold of dimension~$k$.  
Their work is motivated by the study of random fields in probability theory, where one seeks to make sense of the integration of random distributions defined on~$\R^d$ over $k$-dimensional geometric objects when $k < d$.  
Examples include the study of \emph{circle averages of the Gaussian Free Field} and that of \emph{Wilson loop observables} in gauge theory. Their approach is based on geometric integration techniques using simplices and following the seminal ideas introduced by Whitney and Harrison.

\medskip

Our aim in this paper is to give a meaning to the integral
\[
\int_{\Omega} \fdgd
\]
within a classical distributional framework, under more general geometric conditions on the domain~$\Omega$.  
Our approach takes place in a traditional analytical setting: we provide a natural and explicit construction of the distribution $\fdgd$ in the sense of~Schwartz on~$\R^d$, study its Hölder regularity through its wavelet coefficients, and define its integral by duality, via the standard correspondence between dual Besov spaces. We further connect the analytical condition to measure geometric properties of the domain.  

\subsection{Our contribution}
Let $\alpha, \beta_1, \dots, \beta_d$ be real numbers satisfying \eqref{eq : alpha + beta > d}. Set
\begin{equation}
\label{eq : def de beta et gamma}
\beta = \sum_{i=1}^d \beta_i \quad \text{and} \quad  \gamma = d - \beta.
\end{equation}
The definitions of globally Hölder function and distribution spaces on $\R^d$ are recalled in Section \ref{section : Notations globales} (see in particular \eqref{def : Espace d'Hölder global} and \eqref{eq : norme Cgamma}). The main result of Section \ref{section : Construction et étude de la distribution : cas globalement höldérien} is the construction, from Hölder functions $f, g^1, \dots, g^d,$ a Hölder distribution $\fdgd$ and to clarify how this distribution depends on these functions.

\begin{theo}
\label{theo : distribution globale f dg1 ^....^ dgd}
Under the above assumptions, there exists a unique $(d+1)$-linear mapping 
$$\begin{array}{rrcl}
\M :  &\Calpha(\R^d) \times \C^{\beta_1}(\R^d) \times \dots \times \C^{\beta_d}(\R^d) & \longrightarrow & \C^{-\gamma}(\R^d)  \\
& (f, g^1, \dots, g^d) & \longmapsto & \fdgd
\end{array}$$
satisfying the following properties:
\begin{enumerate}[label=\arabic*.]
\item If the functions $g^1, \dots, g^d$ are Lipschitz, then 
\begin{equation}
\label{eq : prop 1 theo distribution globale f dg1 ^....^ dgd}
\fdgd = f \det(\dg).
\end{equation}
\item Let $\alpha' \le \alpha, \, \beta_1' \le \beta_1, \, \dots \, , \, \beta_d' \le \beta_d$ satisfy \eqref{eq : alpha + beta > d}, and set $\gamma' = d - \sum_{i=1}^d \beta_i'$.  
Then, there exists a constant $C > 0$ such that, for all $f \in \C^{\alpha}(\R^d), g^1 \in \C^{\beta_1}(\R^d), \dots, g^d \in \C^{\beta_d}(\R^d)$, we have
\begin{equation}
\label{eq : prop 2 theo distribution globale f dg1 ^....^ dgd}
\|\fdgd\|_{\C^{-\gamma'}} \le C\, \|f\|_{\C^{\alpha'}}\prod_{i=1}^d\|g^i\|_{\beta_i'}.
\end{equation}
\end{enumerate}
\end{theo}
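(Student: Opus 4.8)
The plan is to construct $\M$ on rectangles first, using Züst's integral as the local data, and then glue these pieces together into a global distribution by means of a wavelet (or Faber–Schauder) decomposition. Concretely, for each dyadic rectangle $R$ of $\R^d$ I would set $\mu(R) = \int_R \fdgd$ using Theorem~\ref{theo : intégrale de Zust}; Proposition~\ref{prop : additivité sur les rectangles} shows this assignment is additive over rectangles, so it defines a finitely additive set function on the algebra generated by dyadic rectangles. The sewing inequality \eqref{eq : inégalité de couture} gives the crucial quantitative control: for a dyadic cube $R$ of side $2^{-j}$,
\[
\left| \mu(R) - f(a_R)\int_R \dgd \right| \le C\, 2^{-j(\alpha'+\beta')}\|f\|_{\alpha'}\prod_i \|g^i\|_{\beta_i'},
\]
and one also controls $\int_R \dgd$ itself (again by the sewing inequality applied with a different base point, or by the $d=1$ fact that this is a one-dimensional Young integral over $\partial R$) by $C\, 2^{-j\beta'}\prod_i\|g^i\|_{\beta_i'}$. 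Together these say that $\mu(R) = O(2^{-j\beta'})$ uniformly, with an $O(2^{-j(\alpha'+\beta')})$ refinement when compared to the constant $f(a_R)$.

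Next I would turn $\mu$ into a tempered distribution. The natural route is to pair $\mu$ against the $d$-dimensional Faber–Schauder / Haar-type hierarchical basis: write any test function $\varphi$ in terms of its averages and increments over dyadic cubes, and define $\langle \fdgd, \varphi\rangle$ as the corresponding sum of increments of $\varphi$ weighted by the $\mu$-masses of dyadic cubes. The decay estimate $\mu(R) = O(2^{-j\beta'})$ with $\beta' > d - 1$, combined with the fact that at scale $j$ there are $O(2^{jd})$ cubes but the increments of a smooth (or merely $\gamma'$-Hölder-tested) function carry a compensating factor, is exactly what makes this sum converge and defines an element of $\C^{-\gamma'} = \B^{-\gamma'}_{\infty,\infty}$. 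More precisely, the wavelet characterization of $\C^{-\gamma'}$ requires the wavelet coefficients $c_{j,k}$ of the distribution to satisfy $|c_{j,k}| \lesssim 2^{-j(d/2 - \gamma')}$ (with the appropriate $L^2$ normalization); since a wavelet $\psi_{j,k}$ has $O(2^{-jd})$... rather, has integral zero and is supported in a cube of side $O(2^{-j})$, pairing it with $\mu$ and using the sewing estimate against the constant term (which the zero mean of $\psi_{j,k}$ kills) yields $|c_{j,k}| \lesssim 2^{-jd/2} \cdot 2^{-j(\alpha'+\beta')} \cdot 2^{jd}$... this needs the bookkeeping done carefully, but the upshot is a bound of the form $2^{j\gamma'}\cdot(\text{normalization})$, giving exactly membership in $\C^{-\gamma'}$ with the norm bound \eqref{eq : prop 2 theo distribution globale f dg1 ^....^ dgd}. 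The $(d+1)$-linearity is inherited from that of Züst's integral on each rectangle and passes to the limit.

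For property~1, when the $g^i$ are Lipschitz, Theorem~\ref{theo : intégrale de Zust}(1) gives $\mu(R) = \int_R f\det(\dg)\dx$ for every dyadic rectangle, so $\mu$ agrees with the measure $f\det(\dg)\dx$ on the generating algebra, hence the distribution $\fdgd$ we build coincides with the locally integrable function $f\det(\dg)$. For uniqueness of $\M$: any $(d+1)$-linear map into $\C^{-\gamma}(\R^d)$ satisfying 1 and 2 is determined by the Lipschitz approximation argument — approximate $f, g^i$ uniformly by Lipschitz functions with bounded Hölder norms (the same metric-space lemma Züst uses), apply continuity \eqref{eq : prop 2 theo distribution globale f dg1 ^....^ dgd} with slightly smaller exponents $\alpha', \beta_i'$ still satisfying \eqref{eq : alpha + beta > d} to control the error in $\C^{-\gamma'} \hookrightarrow \C^{-\gamma}$, and conclude that the value on $(f, g^1,\dots,g^d)$ is forced.

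I expect the main obstacle to be the gluing step: verifying that the Faber–Schauder/wavelet series actually defines a distribution of the claimed negative Hölder regularity, i.e. tracking the interplay between the number of cubes at scale $j$ (growing like $2^{jd}$), the mass decay $\mu(R) = O(2^{-j\beta'})$, and the cancellation in the test function, so as to land precisely in $\C^{-\gamma'}$ with $\gamma' = d - \beta'$ and not merely in some cruder space. Establishing that $\gamma'$ is sharp, and that the construction is genuinely independent of the choice of dyadic grid (translation/dilation), will require the additivity and sewing estimates to be used in tandem, and this is where the technical heart of Section~\ref{section : Construction et étude de la distribution : cas globalement höldérien} lies.
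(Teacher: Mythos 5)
Your overall architecture (sewing inequality for the wavelet coefficients, wavelet characterization of $\C^{-\gamma'}$, smooth approximation for uniqueness) is the right one, and your treatment of property~1 and of uniqueness essentially coincides with the paper's. The construction step is where you diverge and where the proposal is incomplete. You build a finitely additive set function $\mu(R)=\int_R\fdgd$ on dyadic rectangles and then try to pair it with a test function through a hierarchical Faber--Schauder/Haar summation of increments; you correctly identify the convergence of that sum as ``the main obstacle'', but you do not resolve it --- and resolving it amounts to proving a $d$-dimensional sewing lemma, which is essentially the route of Bouafia's charges or the Alberti--Stepanov--Trevisan germs, not of this paper. The paper sidesteps the gluing entirely with one observation you missed: $\Calpha(\R^d)$ is a Banach algebra, so for $\phi\in\D(\R^d)$ the product $f\phi$ is again $\alpha$-H\"older with compact support, and one simply sets $\langle\fdgd,\phi\rangle=\int_R(f\phi)\dgd$ for \emph{any single} rectangle $R$ containing $\supp(\phi)$; this is well defined by Lemma~\ref{lem : prolongement de l'intégrale quand la fonction vaut 0}, which rests on Proposition~\ref{prop : additivité sur les rectangles}. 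No dyadic decomposition of the test function is needed to define the distribution, and $(d+1)$-linearity is inherited directly from Z\"ust's integral on a fixed rectangle.

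With that definition the ``careful bookkeeping'' you leave open becomes a two-line computation: $\cijk=2^{dj}\int_{\kNj}\bigl(f\psiijk\bigr)\dgd$, the term $f(a)\int\dgd$ in the sewing inequality \eqref{eq : inégalité de couture} vanishes because $f\psiijk$ is zero at the corner of $\kNj$ --- note that this is a support property of the wavelet, \emph{not} its zero mean; in your set-function framework the ``constant term'' $f(a_Q)\int_Q\dgd$ varies with $Q$ and is not killed by $\int\psi^{(i)}=0$, so that part of your heuristic does not go through as stated --- and the bound $\|f\psiijk\|_{\alpha'}\le 2^{\alpha' j}\|f\|_{\C^{\alpha'}}\|\psi^{(i)}\|_{\C^{\alpha'}}$ yields $|\cijk|\lesssim 2^{dj}\,2^{-j(\alpha'+\beta')}\,2^{\alpha' j}=2^{\gamma' j}$, which is exactly \eqref{eq : prop 2 theo distribution globale f dg1 ^....^ dgd}; the coefficients $c_k$ are bounded in the same way. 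For uniqueness, the paper only needs to approximate the $g^i$ by smooth functions (Lemma~\ref{lem : lemme d'approximation global}), since property~1 already applies for H\"older $f$ and Lipschitz $g^i$; approximating $f$ as well, as you propose, is harmless but unnecessary.
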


To prove the uniqueness of such a mapping $\M,$ we shall use the fact that, by virtue of Lemma $\ref{lem : lemme d'approximation global},$ any function in the space $\Calpha(\R^d)$ can be approximated in the norm $\C^{\alpha'}$ by a sequence of arbitrarily smooth functions, whenever $\alpha' \in (0,\alpha)$. 
This approximation result also allows us to deduce from the smooth case several properties satisfied by the mapping $\M$:
\begin{enumerate}[label=\arabic*.]
\item the mapping $\M$ is antisymmetric with respect to the functions $(g^i)_{i=1}^d,$
\item if one of the functions $g^i$ is constant, then 
$ \fdgd = 0.$
\end{enumerate}

\medskip

Let us now fix
$$ (f, g^1, \dots, g^d) \in \Calpha(\R^d) \times  \C^{\beta_1}(\R^d)\times \dots \times \C^{\beta_d}(\R^d).$$
In Section \ref{section : Extension de l'intégrale de Züst à un domaine borné général} we use the distribution $\fdgd$ to give  meaning to integrals of the form 
$\int_{\Omega} \fdgd$, for general Borel sets $\Omega$ in $\R^d$.
Since the space $\C^{-\gamma}(\R^d)$ is the topological dual of the Besov space $\B^{\gamma}_{1,1}(\R^d)$ (see definition \ref{def : espace de Besov globaux}), we define by duality the integral by setting
 $$\int_{\Omega} \fdgd = \langle \fdgd, \mathbb{1}_{\Omega} \rangle$$
for every Borel set $\Omega \in \B(\R^d)$ satisfying the condition 
\begin{equation}
\label{eq : condition besov pour l'intégral}
\mathbb{1}_{\Omega} \in \B^{\gamma}_{1,1}(\R^d);
\end{equation}
note that this condition does not imply that $\Omega$ is a bounded set.
We shall verify that this integral coincides with the one introduced by Züst in Theorem \ref{theo : intégrale de Zust} when the set $\Omega$ is a rectangle in $\R^d,$ and that this condition generalizes the condition on the perimeter of $\Omega$ used by Bouafia in \citep{BOU24}.

Subsequently, we will provide a simple geometric criterion on the Borel set $\Omega$ ensuring that condition \eqref{eq : condition besov pour l'intégral} is satisfied. More precisely, we introduce the notion of  \emph{Lebesgue boundary} of the set $\Omega$ (a subset of the topological boundary, see Definition \ref{def : frontière de Lebesgue}), which allows to establish a geometric criterion  which guarantees that the function $\mathbb{1}_{\Omega}$ belongs to a given Besov space, see Proposition \ref{prop : Espace de Besov indicatrice en dim d}. This criterion refines the classical one involving the upper box dimension of the usual boundary (see Definition \ref{defi : dim boite sup}), and relies crucially on the fact that it is always possible to construct a Borel set $\widetilde\Omega$ such that 
$$\lambda \left( \Omega \triangle \widetilde\Omega \right) = 0  \quad \text{ and } \quad \partial^{\ast} \Omega = \partial \widetilde\Omega,$$
see Theorem \ref{theo : frontière de Lebesgue}. 
For any subset $A \subset \R^d$ and any integer $j \in \N,$ we will use the notation  
\begin{equation}
\label{eq : def de Nj(A)}
 N_j(A)  = \Card\Big( \big\lbrace (k_1, \dots, k_d) \in \Z^d : A \cap \prod_{i=1}^d\left[ \tfrac{k_i}{2^j}, \tfrac{k_i+1}{2^j}\right] \neq \varnothing \big\rbrace\Big).
\end{equation}
Proposition \ref{prop : Espace de Besov indicatrice en dim d} then yields the following criterion:
\begin{equation}
\label{eq : indicatrice appartient à B(gamma,1,1)}
\sum_{j=0}^{\infty} 2^{- \beta j}N_j(\partial^{\ast} \Omega) < \infty \implies \mathbb{1}_{\Omega} \in \B^{\gamma}_{1,1}(\R^d).
\end{equation}
We will observe in Corollary \ref{cor : dim boîte indicatrice appartient à B(gamma,1,1)} that a condition on the upper box dimension of the Lebesgue boundary of the set $\Omega$ implies the geometric convergence of the above series. This fact, together with the inclusion $\partial^{\ast} \Omega  \subset \partial \Omega$, allows to recover the condition of Alberti, Stepanov, and Trevisan in dimension $2,$ as stated in \citep{AlSTTRE24}. We construct a class of functions, defined as Schauder series, such that their epigraphs satisfy \eqref{eq : indicatrice appartient à B(gamma,1,1)} in a sharp way.

\medskip

In Section \ref{section : Towards more general assumptions}, we relax the Hölder regularity assumptions on the functions $f,g^1, \dots, g^d,$ so that it remains possible to define the Hölder distribution $\fdgd,$ and to use it to define the integral 
$$\int_{\Omega} \fdgd$$
for any Borel set $\Omega$ satisfying condition \eqref{eq : condition besov pour l'intégral}.

First, we highlight the fact that the function $f$  on the one hand, and the functions $g^1, \dots, g^d$ on the other hand,  play distinct roles in the above construction of this distribution. More precisely, we show that if the function $f$ belongs to the inhomogeneous Hölder space $\Calpha(\R^d),$ it is sufficient to assume that the functions $g^1, \dots, g^d$ belong respectively to the homogeneous Hölder spaces $\dot \C^{\beta_1}(\R^d), \dots, \dot \C^{\beta_d}(\R^d),$ in order to define the distribution $\fdgd$ through \eqref{eq : def de la distribution fdg1^...^dgd}. We refer to the beginning of Section \ref{section : espace de hölder homogène} for the precise definition of these spaces. We then characterize this distribution in Theorem \ref{theo : distribution globale f dg1 ^....^ dgd homogène}, which provides a homogeneous version of Theorem \ref{theo : distribution globale f dg1 ^....^ dgd}. Finally, we note that in this setting it is still possible to define the integral $\int_{\Omega} \fdgd$ via \eqref{eq : Int_Omega fdgd = < fdgd, 1_Omega> }, for any Borel set $\Omega$ satisfying condition \eqref{eq : condition besov pour l'intégral}.

Secondly, we consider the case where the functions $f,g^1, \dots, g^d$ are only locally Hölder continuous.  
In this case, it is still possible to define the distribution $\fdgd$ via \eqref{eq : def de la distribution fdg1^...^dgd}, but this distribution now belongs to a space of locally Hölder distributions. We refer to the beginning of Section \ref{section : Construction et étude de la distribution : cas localement höldérien} for the precise definition of these spaces.  
We then characterize this distribution by means of a local version of Theorem \ref{theo : distribution globale f dg1 ^....^ dgd}.  

\begin{theo}
\label{theo : distribution locale f dg1 ^....^ dgd}
There exists a unique $(d+1)$-linear map
$$\begin{array}{rrcl}
\M :  &\Calphaloc(\R^d) \times  \C^{\beta_1}_{\loc}(\R^d) \times \dots \times \C^{\beta_d}_{\loc}(\R^d) & \longrightarrow & \C^{-\gamma}_{\loc}(\R^d)  \\
& (f, g^1, \dots, g^d) & \longmapsto &  \fdgd
\end{array}$$
satisfying the following properties:
\begin{enumerate}[label=\arabic*.]
\item If the functions $g^1, \dots, g^d$ are locally Lipschitz, then
$$ \fdgd = f\det(\dg).$$
\item Let $\alpha' \le \alpha$ and $\beta_i'\le \beta_i,$ $i \in \{1,\dots,d\},$ satisfying \eqref{eq : alpha + beta > d}.  
Set $\gamma' = d-\sum_{i=1}^d \beta_i'.$  
Then, there exists a constant $C > 0$ such that, for every $f\in \Calphaloc(\R^d), g^1 \in \C^{\beta_1}_{\loc}(\R^d), \dots, g^d \in \C^{\beta_d}_{\loc}(\R^d)$ and every integer $n \in \N,$ one has
$$ \|\fdgd\|_{\C^{-\gamma'}(\nn)} \le C\, \|f\|_{\C^{\alpha'}([-n,n+N]^d)} \prod_{i=1}^d \|g_i\|_{\beta_i', [-n,n+N]^d}.$$
\end{enumerate}
\end{theo}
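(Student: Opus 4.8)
We plan to deduce Theorem~\ref{theo : distribution locale f dg1 ^....^ dgd} from its global counterpart, Theorem~\ref{theo : distribution globale f dg1 ^....^ dgd}, by localisation, and the step we expect to be the main obstacle is a \emph{locality} property of the global construction: writing $\M_{\glob}$ for the global map, if $f,\tilde f\in\Calpha(\R^d)$ agree on $[-n,n+N]^d$ and $g^i,\tilde g^i\in\C^{\beta_i}(\R^d)$ agree on $[-n,n+N]^d$ for each $i$, then $\M_{\glob}(f,g^1,\dots,g^d)$ and $\M_{\glob}(\tilde f,\tilde g^1,\dots,\tilde g^d)$ coincide as distributions on the open cube $(-n,n)^d$. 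This should be read off from the construction~\eqref{eq : def de la distribution fdg1^...^dgd} itself: the pairing of $\M_{\glob}(f,g^1,\dots,g^d)$ with a test object localised in $(-n,n)^d$ is assembled from finitely many Züst integrals $\int_R\fdgd$ over dyadic cubes $R$, all contained in $[-n,n+N]^d$ — this containment, dictated by the one-sided intervals $\kNj$ underlying the construction, is precisely why the asymmetric cube $[-n,n+N]^d$ appears — and each $\int_R\fdgd$ depends only on the restrictions of $f,g^1,\dots,g^d$ to $R$. (Failing a direct argument, one reduces by $(d+1)$-linearity to the assertion that $\M_{\glob}$ annihilates a tuple one of whose entries vanishes on $[-n,n+N]^d$, which follows from the Lipschitz case of Theorem~\ref{theo : distribution globale f dg1 ^....^ dgd} together with the sewing inequality~\eqref{eq : inégalité de couture} and the approximation Lemma~\ref{lem : lemme d'approximation global}.)

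\textbf{Construction and Properties~1--2.} Granting this, let $\Pi_n$ be the nearest-point projection of $\R^d$ onto $[-n,n+N]^d$; since $\Pi_n$ is $1$-Lipschitz, $u\mapsto u\circ\Pi_n$ is a linear extension operator, equal to the identity on $[-n,n+N]^d$, which does not increase the norm $\|\cdot\|_{\C^{\alpha'}}$, the seminorms $\|\cdot\|_{\beta_i'}$, or the Lipschitz constant, with bounds uniform in $n$. For $f\in\Calphaloc(\R^d)$, $g^i\in\C^{\beta_i}_{\loc}(\R^d)$ and $\varphi\in C_c^\infty(\R^d)$ with $\supp\varphi\subset(-n,n)^d$ we set
$$\langle\fdgd,\varphi\rangle:=\big\langle \M_{\glob}\big(f\circ\Pi_n,\,g^1\circ\Pi_n,\dots,g^d\circ\Pi_n\big),\,\varphi\big\rangle;$$
by the locality property this is independent of the $n$ with $\supp\varphi\subset(-n,n)^d$, so it defines a distribution on $\R^d$, visibly $(d+1)$-linear in $(f,g^1,\dots,g^d)$. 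Since the seminorm $\|\cdot\|_{\C^{-\gamma'}(\nn)}$ only tests against objects localised in $\nn$, on which $\fdgd$ agrees with $\M_{\glob}(f\circ\Pi_n,\dots,g^d\circ\Pi_n)$, the global estimate~\eqref{eq : prop 2 theo distribution globale f dg1 ^....^ dgd} and the boundedness of $u\mapsto u\circ\Pi_n$ give
$$\|\fdgd\|_{\C^{-\gamma'}(\nn)}\le\|\M_{\glob}(f\circ\Pi_n,\dots)\|_{\C^{-\gamma'}(\R^d)}\le C\,\|f\|_{\C^{\alpha'}([-n,n+N]^d)}\prod_{i=1}^d\|g^i\|_{\beta_i',[-n,n+N]^d},$$
which is Property~2, and in particular shows $\fdgd\in\C^{-\gamma}_{\loc}(\R^d)$. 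For Property~1, if the $g^i$ are locally Lipschitz then the $g^i\circ\Pi_n$ are globally Lipschitz, so Property~1 of Theorem~\ref{theo : distribution globale f dg1 ^....^ dgd} yields $\M_{\glob}(f\circ\Pi_n,g^1\circ\Pi_n,\dots)=(f\circ\Pi_n)\det(\nabla(g\circ\Pi_n))$; restricting to $(-n,n)^d$, which lies in the interior of $[-n,n+N]^d$ where $\Pi_n$ is the identity, gives $\fdgd=f\det(\dg)$ there, and $n$ is arbitrary.

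\textbf{Uniqueness.} Let $\M'$ be a second map with Properties~1 and~2. By Property~1, $\M=\M'$ whenever $g^1,\dots,g^d$ are locally Lipschitz; for a general tuple we argue by density. Since~\eqref{eq : alpha + beta > d} holds strictly for $\alpha',\beta_1',\dots,\beta_d'$, fix $\beta_i''\in(0,\beta_i')$ with $\alpha'+\sum_i\beta_i''>d$ still, and set $\gamma''=d-\sum_i\beta_i''$. Mollifying, $g^i_k:=g^i*\rho_{1/k}$ is of class $\C^{\infty}$ (hence locally Lipschitz), converges to $g^i$ uniformly on compacts with uniformly bounded local $\|\cdot\|_{\beta_i'}$-seminorms, so by interpolation $g^i_k\to g^i$ in $\C^{\beta_i''}_{\loc}(\R^d)$ — this is the local form of Lemma~\ref{lem : lemme d'approximation global}. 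Telescoping $\M(f,g^1_k,\dots,g^d_k)-\M(f,g^1,\dots,g^d)$ into $d$ terms, each with exactly one entry of the form $g^i_k-g^i$, and bounding each by Property~2 with exponents $\alpha',\beta_1'',\dots,\beta_d''$ (using the uniform local bounds on the remaining factors), we get $\M(f,g^1_k,\dots,g^d_k)\to\M(f,g^1,\dots,g^d)$ in $\C^{-\gamma''}_{\loc}(\R^d)$, hence in $\D'(\R^d)$; likewise for $\M'$. As $\M(f,g^1_k,\dots,g^d_k)=\M'(f,g^1_k,\dots,g^d_k)$ for every $k$ by Property~1, we conclude $\M=\M'$. (The antisymmetry in the $g^i$ and the vanishing of $\fdgd$ when some $g^i$ is constant then follow from the smooth case by the same mollification-and-interpolation scheme, as in the global setting.)
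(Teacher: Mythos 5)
Your proof is correct in substance but takes a genuinely different route from the paper's. The paper does not pass through the global theorem at all: it observes that the defining formula \eqref{eq : def de la distribution fdg1^...^dgd}, namely $\langle \fdgd,\phi\rangle=\int_R (f\phi)\,\dgd$ with $R$ any rectangle containing $\supp\phi$, only ever sees the restrictions of $f,g^1,\dots,g^d$ to $R$, so the construction goes through verbatim for locally Hölder inputs (Lemma \ref{lem : prolongement de l'intégrale quand la fonction vaut 0} and Proposition \ref{prop : fdg forme linéaire sur Calpha} adapt directly); it then reproves the regularity estimate in local form (Proposition \ref{prop : régularité de la distribution localement höldérien}), bounding $\cijk$ for $|k2^{-j}|\le n$ via the sewing inequality over $\kNj\subset[-n,n+N]^d$, and obtains uniqueness from a local approximation lemma (Lemma \ref{lem : lemme d'approximation local}), which plays exactly the role of your mollification-and-interpolation step. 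Your alternative --- extend by the $1$-Lipschitz nearest-point projection $\Pi_n$, apply the global map, and glue via a locality property --- buys reuse of Theorem \ref{theo : distribution globale f dg1 ^....^ dgd} as a black box for the quantitative estimate, at the price of having to open that box anyway to justify locality (which, as you note, is immediate from \eqref{eq : def de la distribution fdg1^...^dgd}, but is not a formal consequence of Properties 1--2 of the global theorem alone without a further approximation argument). One place to tighten: the wavelets indexed by $|k2^{-j}|\le n$ entering the seminorm $\|\cdot\|_{\C^{-\gamma'}(\nn)}$ are supported in $[-n,n+N]^d$, not in $(-n,n)^d$, so to identify $\cijk(\fdgd)$ with $\cijk\bigl(\M_{\glob}(f\circ\Pi_n,\dots)\bigr)$ you should invoke locality for test functions supported in $[-n,n+N]^d$ (for instance, compute the pairing using $\Pi_{n+N+1}$ and observe that $f\circ\Pi_{n+N+1}$ and $f\circ\Pi_n$, and likewise the $g^i$, agree on $[-n,n+N]^d\supset\kNj$); with that adjustment the cube $[-n,n+N]^d$ in Property 2 comes out exactly as stated, and the rest of your argument, including the telescoping for uniqueness, is sound.
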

Finally, we show that the integral  
\[
\int_{\Omega} \fdgd
\]
can still be defined by means of a localization argument, for any bounded Borel set $\Omega$ satisfying condition~\eqref{eq : condition besov pour l'intégral} (see~\eqref{eq : Int_Omega fdgd = < fdgd, 1_Omega> cas local}).

\section{Construction and study of the distribution: the globally Hölder case}
\label{section : Construction et étude de la distribution : cas globalement höldérien}
\subsection{Notations and background}
\label{section : Notations globales}

Let $d \in \N^{\ast}.$ In this text, we will consider real-valued functions or distributions defined on the whole space $\R^d$. Let us begin by recalling some definitions and notations used in what follows.

Let $f : \R^d \longrightarrow \R$ be a function. For any multi-index $k = (k_1, \dots, k_d) \in \N^d$, the partial derivative of order $k$ of the function $f$ at a point $x \in \R^d$ is denoted by
$$
\partial^{k} f (x) = \frac{\partial^{|k|} f}{\partial x_1^{k_1} \dots \partial x_d^{k_d}}(x),
$$
where $|k| = \sum_{i=1}^d k_i$ is the length of the multi-index $k$. We adopt the convention  $\partial^{k}f = f$ if the multi-index $k$ is zero.
For any integer $r \in \N$, the function $f$ is of class $\C^r$ on $\R^d$ if all partial derivatives $\partial^{k}f$, for any $k \in \N^d$ with $|k| \le r$, exist and are continuous in $\R^d$.
We denote by $\C^r(\R^d)$ the space of functions of class $\C^r$ on $\R^d$ that are bounded on $\R^d$, as well as all their partial derivatives of order less than or equal to $r$. We will denote
$$ \|f\|_{\infty} = \sup_{x \in \R^d} |f(x)|, $$
and endow the space $\C^r(\R^d)$ with the norm
$$ \|f\|_{\C^r} = \sum_{\substack{ k \in \N^d \\ |k|\le r}} \|\partial^{k}f\|_{\infty}.$$
The function $f$ is said to be infinitely differentiable on $\R^d$, or of class $\C^{\infty}$, if it is of class $\C^r$ on $\R^d$ for all integers $r \in \N$. The space of functions of class $\C^{\infty}$ on $\R^d$ whose derivatives, as well as the function itself, are bounded on $\R^d$ is denoted by $\C^{\infty}(\R^d).$
A test function is any function of class $\C^{\infty}$ with compact support in $\R^d.$ We write $\D(\R^d)$ for the space consisting of such functions. For any compact set $K \subset \R^d$, let $\D(K) = \lbrace \phi \in \D(\R^d) : \supp(\phi) \subset K \rbrace$.
The space of distributions on $\R^d$ will be denoted by $\D'(\R^d)$. It consists of linear forms $T : \D(\R^d) \longrightarrow \R$ satisfying the following continuity property: for any compact set $K \subset \R^d$, there exist a constant $C = C_K > 0$ and an integer $r = r_K \in \N$ such that, for any test function $\phi \in \D(K)$,
$$ |T(\phi)| \le C\, \|\phi\|_{\C^r}.$$
If there exists an integer $r$ that does not depend on the compact set $K$ in the above inequality, we say that the distribution $T$ is of finite order, and we define the order of $T$ as the smallest such integer $r$.
We will sometimes denote $\langle T, \phi \rangle$ the quantity $T(\phi)$. We now recall the definition of globally Hölder functions and distributions on $\R^d$.

Let $\alpha \in (0,1)$. We define the $\alpha$-Hölder seminorm of the function $f$ by  
$$ \|f\|_{\alpha} = \sup_{\substack{x, y \in \R^d \\ x \neq y}} \frac{|f(y) - f(x)|}{|y - x|^{\alpha}},$$  
and we say that $f$ is $\alpha$-Hölder (continuous) on $\R^d$ if this quantity is finite.  
We denote by  
\begin{equation}
\label{def : Espace d'Hölder global}
\Calpha(\R^d) = \left\{ f : \R^d \to \R \;:\; \|f\|_{\alpha} + \|f\|_{\infty} < \infty \right\}
\end{equation}  
the inhomogeneous Hölder space of bounded $\alpha$-Hölder functions on $\R^d$, endowed with the norm  
$$ \|f\|_{\Calpha} = \|f\|_{\alpha} + \|f\|_{\infty}. $$  
Recall that for every $\beta \in (0,\alpha)$ we have the continuous embeddings  
$$ \C^{1}(\R^d) \hookrightarrow \Calpha(\R^d) \hookrightarrow \Cbeta(\R^d) \hookrightarrow \C^{0}(\R^d). $$  

We now introduce the (global) Besov spaces $\Bspq(\R^d),$ where $s \in \R$ and $p,q \in (0,\infty].$
There are several ways to define these spaces, and we refer the reader to Triebel’s monographs \citep{Tri1, Tri3} for a comprehensive presentation. 
Here we choose to define them through their wavelet characterization, following Meyer’s book \citep{ME92}. 
We begin by recalling some results and notations concerning wavelets. For more details on this topic, we refer to the standard references \citep{Dau92}, \citep{Ma09}, and \citep{ME92}. 
The following theorem, due to Daubechies, states the existence of wavelet bases with compact support and arbitrarily high (but finite) smoothness (see Section $4.C$, p.$978$ of \citep{Dau88}).

\begin{theo}
\label{theo : Base d'ondelettes de Daubechies}
For any integer $r \ge 1$, there exist real-valued functions 
$\phi, \psi^{(1)}, \dots, \psi^{(2^d-1)}$ defined on $\R^d$ and an integer $N \in \N$ 
satisfying the following properties:
\begin{enumerate}[label=\arabic*.]
\item The functions $\phi, \psi^{(1)}, \dots, \psi^{(2^d-1)}$ are of class $\C^r$ with compact support, and all have the same support exactly equal to the cube $[0,N]^d.$
\item For every multi-index $m \in \N^d$ such that $|m| \le r - 1$ and every integer $i \in \{1, \dots, 2^d-1\}$, one has
\[
\int_{\R^d} \phi(x) \dx = 1,
\qquad 
\int_{\R^d} x^m \psi^{(i)}(x) \dx = 0.
\]
\item The family 
\[
\{\phi(\cdot - k),\; 2^{\frac{dj}{2}} \psi^{(i)}(2^j \cdot - k) 
: j \in \N,\; i \in \{1, \dots, 2^d - 1\},\; k \in \Z^d \}
\]
forms an orthonormal basis of $L^2(\R^d).$
\end{enumerate}
\end{theo}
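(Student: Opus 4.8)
The plan is to obtain the $d$-dimensional family by tensorizing the one-dimensional Daubechies wavelet, so that the only substantial input is the classical one-dimensional construction and everything else is tensor-product bookkeeping.

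\emph{One-dimensional input.} First I would invoke the one-dimensional theorem of Daubechies \citep{Dau88}: for every integer $r \ge 1$ there are an integer $K \ge r$ (taken large enough relative to $r$) and real-valued functions $\phi_0, \psi_0 \in \C^r(\R)$ with compact support — the scaling function and mother wavelet of the Daubechies wavelet with $K$ vanishing moments — such that $\{\phi_0(\cdot - k) : k \in \Z\} \cup \{2^{j/2}\psi_0(2^j\cdot - k) : j \in \N,\, k \in \Z\}$ is an orthonormal basis of $L^2(\R)$, with $\int_\R \phi_0(t)\,\dt = 1$, $\int_\R t^m\psi_0(t)\,\dt = 0$ for $0 \le m \le K-1$, and $\supp\phi_0 = [0,2K-1]$, $\supp\psi_0$ an interval of the same length $2K-1$. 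This is exactly the content of \citep{Dau88}, Section~4.C; the delicate part there is the spectral (Fejér--Riesz) factorization producing $\phi_0$ together with the regularity estimates guaranteeing $\C^r$ smoothness once $K$ is large. Set $N = 2K-1$. Since translating $\psi_0$ by an integer changes neither its smoothness, nor its vanishing moments, nor — after an integer reindexing of $k$ — the orthonormal family, I may assume $\supp\phi_0 = \supp\psi_0 = [0,N]$.

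\emph{Tensorization and properties 1--2.} Put $\theta_0 = \phi_0$, $\theta_1 = \psi_0$, define $\phi(x) = \prod_{i=1}^d \phi_0(x_i)$ and, for each $\epsilon \in \{0,1\}^d\setminus\{0\}$, $\psi^{\epsilon}(x) = \prod_{i=1}^d \theta_{\epsilon_i}(x_i)$; there are $2^d-1$ of these, which I enumerate as $\psi^{(1)},\dots,\psi^{(2^d-1)}$. Each is a finite product of $\C^r$ functions with support exactly $[0,N]$, hence lies in $\C^r$ and has support $\prod_{i=1}^d[0,N] = [0,N]^d$, since the support of a tensor product is the product of the supports; this is property~1. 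For property~2, Fubini gives $\int_{\R^d}\phi = \prod_i \int_\R \phi_0 = 1$, while for $|m| \le r-1$ and $\epsilon \ne 0$ one has
\[
\int_{\R^d} x^m \psi^{\epsilon}(x)\,\dx \;=\; \prod_{i=1}^d \int_\R t^{m_i}\theta_{\epsilon_i}(t)\,\dt \;=\; 0 ,
\]
because choosing $i_0$ with $\epsilon_{i_0}=1$ the factor $\int_\R t^{m_{i_0}}\psi_0(t)\,\dt$ vanishes, as $m_{i_0} \le |m| \le r-1 \le K-1$.

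\emph{Property 3 (orthonormal basis) and the main obstacle.} Here I would run the standard multiresolution argument, e.g. as in Meyer's book \citep{ME92}. Let $V^0_j = \overline{\mathrm{span}}\{2^{j/2}\phi_0(2^j\cdot - k):k\in\Z\}$ and $W^0_j = \overline{\mathrm{span}}\{2^{j/2}\psi_0(2^j\cdot - k):k\in\Z\}$; the one-dimensional statement says the $V^0_j$ are nested with $V^0_{j+1} = V^0_j \oplus W^0_j$ orthogonally, $\bigcup_j V^0_j$ dense in $L^2(\R)$ and $\bigcap_j V^0_j = \{0\}$. Taking $d$-fold Hilbert tensor products, $V_j := (V^0_j)^{\otimes d} \subset L^2(\R^d) = L^2(\R)^{\otimes d}$ is the closed span of the orthonormal system $\{2^{dj/2}\phi(2^j\cdot - k):k\in\Z^d\}$, and distributing the tensor product over the sum yields
\[
V_{j+1} \;=\; \bigotimes_{i=1}^d (V^0_j \oplus W^0_j) \;=\; V_j \,\oplus\! \bigoplus_{\epsilon \in \{0,1\}^d\setminus\{0\}} W^{\epsilon}_j, \qquad W^{\epsilon}_j = \bigotimes_{i=1}^d U_{\epsilon_i},\ U_0=V^0_j,\ U_1=W^0_j ,
\]
where $W_j := \bigoplus_{\epsilon\neq 0} W^{\epsilon}_j$ is the closed span of the orthonormal system $\{2^{dj/2}\psi^{\epsilon}(2^j\cdot - k):\epsilon\neq 0,\,k\in\Z^d\}$. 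The $W_j$ are pairwise orthogonal and orthogonal to $V_0$ (for $j<j'$ one has $W_j \subset V_{j+1}\subset V_{j'}\perp W_{j'}$), $\bigcup_j V_j$ is dense in $L^2(\R^d)$ and $\bigcap_j V_j=\{0\}$, so $L^2(\R^d) = V_0 \oplus \bigoplus_{j\in\N}W_j$; reading off the orthonormal bases of the summands is precisely property~3. I expect the main obstacle to be entirely the one-dimensional, external input \citep{Dau88} (spectral factorization and the regularity estimates); the $d$-dimensional step is routine, the only slightly delicate point being the normalization of all supports to $[0,N]^d$, handled by the integer translation above together with the fact that it leaves the orthonormal family unchanged.
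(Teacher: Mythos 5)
The paper does not prove this statement: it is quoted as background and attributed to Daubechies, with a pointer to Section~4.C of \citep{Dau88} (and to \citep{ME92} for the multidimensional form). Your proposal supplies the standard derivation of the $d$-dimensional statement from the one-dimensional Daubechies construction by tensorization, which is exactly how this result is obtained in the references the paper cites, so there is no conflict of method --- you have simply filled in the reduction that the paper leaves implicit. The argument is correct: the only genuinely hard input is the one-dimensional theorem (spectral factorization plus the regularity estimates showing that enough vanishing moments force $\C^r$ smoothness), which you correctly isolate as external; the tensor-product bookkeeping for properties 1--2 (supports multiply, moments factor through Fubini, and one factor with a vanishing moment kills the whole product) and the multiresolution argument for property~3 ($V_{j+1}=\bigotimes_i(V^0_j\oplus W^0_j)$ expanded over $\epsilon\in\{0,1\}^d\setminus\{0\}$, then $L^2=V_0\oplus\bigoplus_{j\ge 0}W_j$ from density of $\bigcup_j V_j$) are all sound. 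Two small remarks: the hypothesis $\bigcap_j V_j=\{0\}$ is not needed for the inhomogeneous decomposition starting at $V_0$, only density of $\bigcup_{j\ge 0}V_j$ is; and your normalization step --- replacing $\psi_0$ by an integer translate so that $\supp\phi_0=\supp\psi_0=[0,N]$, which merely reindexes $k$ at each scale --- is exactly the point one must check to get the clause ``all have the same support exactly equal to $[0,N]^d$'' in the form the paper states it, and you handle it correctly.
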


In the following, we fix an integer $r \in \N$ large enough so that the wavelet characterizations of the functional spaces considered are valid, and wavelets  $\phi, \psi^{(1)}, \dots, \psi^{(2^d-1)}$ of class $\C^r$ satisfying the properties of the Theorem \ref{theo :  Base d'ondelettes de Daubechies}. 
For every integer $i \in \{1, \dots, 2^d - 1\}$, every integer $j \in \N$, and every multi-index $k = (k_1, \dots, k_d) \in \Z^d$, we use the notations

\begin{equation}
\label{eq : notation phik psiijk}
\phi_k = \phi(\cdot-k) \quad \text{and} \quad \psiijk = \psi^{(i)}(2^j \cdot-k).
\end{equation}
We then have the following equalities for the supports of these wavelets:
\[
\supp(\phi_k) = \prod_{l=1}^d [k_l, k_l + N]
\quad \text{and} \quad  
\supp\bigl(\psiijk\bigr) = \prod_{l=1}^d \left[ \tfrac{k_l}{2^j}, \tfrac{k_l+N}{2^j}\right],
\]
and, for simplicity, we shall write
\[
[k, k+N] = \prod_{l=1}^d [k_l, k_l + N]
\quad \text{and} \quad 
\kNj = \prod_{l=1}^d \left[ \tfrac{k_l}{2^j}, \tfrac{k_l+N}{2^j}\right].
\]
We now recall the definition of wavelet coefficients of a distribution of order less than or equal to $r$ on $\R^d$ on this wavelet basis. 
\begin{defi}
\label{def : coeff d'ondelettes dim d}
Let $f \in \D'(\R^d)$ be a distribution of order less than or equal to $r$. For all integers $j \in \N$, $i \in \{1, \dots, 2^d-1\}$ and all multi-indices $k \in \Z^d$, we define the wavelet coefficients of $f$ by
$$c_k = \ck(f) = \langle f , \phi_k \rangle \qquad \text{ and } \qquad \cijk = \cijk(f) = 2^{dj} \langle f, \psiijk \rangle.$$
\end{defi}
Although the wavelet coefficients of a distribution depend on the chosen wavelet basis, we will see that in practice the choice of basis has little impact; for this reason, we shall not indicate the dependence of these coefficients on the basis.
We are now in a position to state the definition of the global Besov spaces on $\R^d$ that we will use.
\begin{defi}
\label{def : espace de Besov globaux}
Let $s \in \R,$ and $p, q \in (0,\infty].$ A distribution $f \in \D'(\R^d)$ of order less than or equal to $r$ belongs to the space $\Bspq(\R^d)$ if its wavelets coefficients satisfy
$$\sum_{k\in \Z^d} |c_k|^p < \infty \quad  \text{and} \qquad 
\sum_{j=0}^{\infty} \left(\sum_{k\in \Z^d} \sum_{i=1}^{2^d-1} 
\left| 2^{(s-\frac{d}{p})j}\cijk\right|^p \right)^{\frac{q}{p}} <\infty,$$
with the usual modifications if $p$ or $q$ is infinite.
\end{defi}

We mention that this definition does not depend on the particular (sufficiently regular) wavelet basis chosen, and that the quantities
\begin{equation}
\label{eq : normes Bspq}
\left( \sum_{k\in \Z^d} |c_k|^p \right)^{\frac{1}{p}} +  
\left( \sum_{j=0}^{\infty} \left( 
\sum_{k\in \Z^d} \sum_{i=1}^{2^d-1} \left| 2^{(s-\frac{d}{p})j}\cijk\right|^p \right)^{\frac{q}{p}} \right)^{\frac{1}{q}},
\end{equation}
while depending on the chosen wavelet basis, define equivalent norms whenever $p,q \in [1,\infty]$. 
For further details, we refer the reader to \citep{ME92}. 
In what follows, we shall denote by $\|f\|_{\Bspq(\R^d)}$ any of the norms defined in \eqref{eq : normes Bspq}.

Let $\alpha \in (0,1)$. We recall that the space $\Calpha(\R^d)$ coincides with the space $\B^{\alpha}_{\infty, \infty}(\R^d)$ and the corresponding norms are equivalent (see Chapter 6 of \citep{ME92}). This motivates the following definition.

Let $\gamma \in \R \setminus \N$.  
A distribution $f$ is said to have global $\gamma$–Hölder regularity if $f \in \B^{\gamma}_{\infty, \infty}(\R^d)$.  
We denote this space by $\Cgamma(\R^d)$, and recall that the norm $\|f\|_{\Cgamma(\R^d)}$ is the infimum of the constants $C > 0$ such that, for all $i \in \{1, \dots, 2^d - 1\}$, $j \in \N$, and $k \in \Z^d$,
\begin{equation}
\label{eq : norme Cgamma}
|c_k| \le C, 
\qquad \text{and} \qquad 
\big|\cijk\big| \le C\, 2^{-\gamma j}.
\end{equation}

\subsection{Construction of the distribution}
\label{section : construction de la distribution globale}
In what follows, we fix real numbers $\alpha, \beta_1, \dots, \beta_d$ satisfying \eqref{eq : alpha + beta > d}, and denote by $\beta$ and $\gamma$ the reals defined by \eqref{eq : def de beta et gamma}.
The aim of this section is to prove Theorem  \ref{theo : distribution globale f dg1 ^....^ dgd}. To this end, we begin by constructing, for all Hölder functions
$$ f \in \Calpha(\R^d), g^1 \in \C^{\beta_1}(\R^d), \dots, g^d \in \C^{\beta_d}(\R^d), $$
a distribution denoted by $\fdgd$, and satisfying
\begin{equation}
\label{eq :  f dg1 ^...^dgd = fdet(dg)}
\fdgd = f \det(\dg)
\end{equation}
when the functions $g^i$ are Lipschitz.
We then determine the global Hölder regularity of this distribution and show that:
$$ \fdgd \in \C^{-\gamma}(\R^d). $$
At this stage, we will be able to define the mapping $\M$ of Theorem  \ref{theo : distribution globale f dg1 ^....^ dgd}:
$$\begin{array}{rrcl}
\M : & \Calpha(\R^d) \times \C^{\beta_1}(\R^d) \times \dots \times \C^{\beta_d}(\R^d) & \longrightarrow & \C^{-\gamma}(\R^d) \\
& (f, g^1, \dots, g^d) & \longmapsto & \fdgd.
\end{array}$$
We will then study the properties of this mapping and complete the proof of the theorem.

Let us emphasise right away that neither the space $\C^{\infty}(\R^d),$ nor even the space of bounded Lipschitz functions on $\R^d,$ is dense in $\C^{\alpha}(\R^d)$. There is therefore no a priori reason to expect the existence of a unique mapping $\M$ as above. However, $\C^{\infty}(\R^d)$ is dense in the space $\left(\Calpha(\R^d), \|\cdot\|_{\C^{\alpha'}}\right)$ for any $\alpha' \in\, (0,\alpha)$. This fact, and a continuity property of the mapping $\M$ stronger than usual continuity, which we will establish, will allow us to show that $\M$ is the only mapping which verifies this continuity property and identity \eqref{eq :  f dg1 ^...^dgd = fdet(dg)}.

\medskip

Let $f \in \Calpha(\R^d)$ and $g^1 \in \C^{\beta_1}(\R^d), \dots, g^d \in \C^{\beta_d}(\R^d)$. 
For any rectangle $R = [a_1, b_1] \times \dots \times [a_d, b_d]$ in $\R^d$, we denote by 
$\delta(R)$ the quantity $\max_{1 \le i \le d} (b_i - a_i)$. 
We begin by recalling the sewing inequality \eqref{eq : inégalité de couture} in this setting:
\begin{equation}
\label{eq : borne de couture dans le cas globalement hölder}
\left| \int_{R} \fdgd - f(a_1, \dots, a_d) \int_{R} \dgd \right|
\le C\, \delta(R)^{\alpha' + \beta'}\, \|f\|_{\alpha',R} 
\prod_{i=1}^d \|g^i\|_{\beta_i',R},
\end{equation}
where we recall that $\beta_1' \in (0, \beta_1], \dots, \beta_d' \in (0, \beta_d]$ 
satisfy condition~\eqref{eq : alpha + beta > d}, that $\beta' = \sum_{i=1}^d \beta_i'$, 
and that the constant $C$ depends only on $d, \alpha', \beta_1', \dots, \beta_d'$.
Moreover, the Hölder seminorms on $R$ are bounded by those on $\R^d$.
From the additivity property on rectangles (Proposition~ \ref{prop : additivité sur les rectangles}), we deduce the following result.

\begin{lemme}
\label{lem : prolongement de l'intégrale quand la fonction vaut 0}
If the function $f$ has compact support, then for any rectangles $P$ and $Q$ containing the support of $f$, we have
$$ \int_P \fdgd = \int_Q \fdgd. $$
\end{lemme}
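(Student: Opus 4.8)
The plan is to reduce the statement to the case of nested rectangles, and then to combine the additivity over rectangles (Proposition \ref{prop : additivité sur les rectangles}) with the sewing inequality \eqref{eq : borne de couture dans le cas globalement hölder}.

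First I would note that it is enough to treat the case where one rectangle is contained in the other. Indeed, if $P$ and $Q$ both contain $\supp(f)$, then $R_0 := P \cap Q$ is again a rectangle, it still contains $\supp(f)$, and it is contained in both $P$ and $Q$; knowing the nested case for the pairs $(R_0,P)$ and $(R_0,Q)$ would give $\int_P \fdgd = \int_{R_0}\fdgd = \int_Q \fdgd$. I would also dispose immediately of the trivial case $f \equiv 0$: then \eqref{eq : borne de couture dans le cas globalement hölder} (whose right-hand side vanishes, since $\|f\|_{\alpha',R}=0$, and whose subtracted term vanishes as well) yields $\int_R \fdgd = 0$ for every rectangle $R$. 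If $f$ is not identically zero, then the open set $\{f \neq 0\}$ is nonempty, so $\supp(f)$ — hence any rectangle containing it — has nonempty interior, and all rectangles appearing below may be taken nondegenerate.

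Next, given nondegenerate rectangles $R_0 \subseteq P$ with $\supp(f) \subseteq R_0$, write $R_0 = \prod_{i=1}^d[c_i,d_i]$ and $P = \prod_{i=1}^d[a_i,b_i]$. I would slice $P$ along the finitely many hyperplanes $\{x_i = c_i\}$ and $\{x_i = d_i\}$ that lie strictly inside $P$; this produces a finite subdivision of $P$ into nondegenerate sub-rectangles with pairwise disjoint interiors, exactly one of which is $R_0$. Applying Proposition \ref{prop : additivité sur les rectangles} finitely many times gives $\int_P \fdgd = \sum_R \int_R \fdgd$, the sum running over the blocks $R$ of the subdivision, so it remains to show $\int_R \fdgd = 0$ for every block $R \neq R_0$. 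For such an $R$ there is a coordinate $i$ whose corresponding edge of $R$ is $[a_i,c_i]$ or $[d_i,b_i]$; in either case the projection of $\mathrm{int}(R)$ to the $i$-th axis misses $[c_i,d_i]$, so $\mathrm{int}(R) \cap R_0 = \varnothing$, hence $\mathrm{int}(R) \cap \supp(f) = \varnothing$. Thus $f$ vanishes on $\mathrm{int}(R)$, and since $f$ is continuous and $R = \overline{\mathrm{int}(R)}$, it vanishes on all of $R$; then $\|f\|_{\alpha',R} = 0$ and $f$ is zero at the base corner of $R$, so \eqref{eq : borne de couture dans le cas globalement hölder} forces $\int_R \fdgd = 0$. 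Summing, $\int_P \fdgd = \int_{R_0}\fdgd$, which is what we want.

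The only point requiring care — the rest being bookkeeping with additivity and with \eqref{eq : borne de couture dans le cas globalement hölder} — is the boundary: since $\supp(f)$ may well meet $\partial R_0$, it is \emph{not} true a priori that $f$ vanishes on the complementary blocks viewed as closed sets, so the argument genuinely passes through the interiors of those blocks (disjoint from $R_0$ because the blocks touch $R_0$ only along faces) and only then upgrades to the closed blocks by continuity of $f$. The handling of degenerate rectangles is the other minor nuisance, which is why I isolate the case $f \equiv 0$ at the outset.
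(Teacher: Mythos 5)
Your proof is correct and follows essentially the same route as the paper: reduce to nested rectangles via $P\cap Q$, decompose the larger rectangle into the smaller one plus finitely many complementary blocks with disjoint interiors, apply Proposition \ref{prop : additivité sur les rectangles}, and show the integral over each complementary block vanishes. The only cosmetic difference is that the paper deduces the vanishing directly from linearity of Züst's integral in $f$ (since $f$ restricts to zero on each block), whereas you invoke the sewing inequality \eqref{eq : borne de couture dans le cas globalement hölder}; your extra care about $f$ vanishing on the \emph{closed} blocks (via continuity, since $\supp(f)$ may meet the shared faces) is a point the paper passes over silently.
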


\begin{proof}
The set $P \cap Q$ is a rectangle that contains the support of $f$ so it sufficient to prove the equality when $P$ is strictly contained in $Q$. We decompose $Q$ as a finite union of disjoint interior rectangles:
$$ Q = P \cup \bigcup_{j=1}^m P_j. $$
For each $j \in \{1,\dots,m\}$, the restriction of $f$ to $P_j$ is zero, so by linearity of the integral, $\int_{P_j} \fdgd = 0$. Using Proposition~ \ref{prop : additivité sur les rectangles}, we obtain
$$ \int_Q \fdgd = \int_P \fdgd + \sum_{j=1}^m \int_{P_j} \fdgd = \int_P \fdgd, $$
which proves the result.
\end{proof}

We now have all the tools needed to define our distribution.
Züst’s integral allows to define this distribution as a continuous linear form on a space of functions larger than the space of test functions. Denote by
$$ \Calpha_c(\R^d) = \left\{ h \in \C^{\alpha}(\R^d) : \supp(h) \text{ is compact} \right\} $$
the space consisting of real-valued $\alpha$-Hölder functions on $\R^d$ with compact support. 
For every integer $n \in \N,$ we shall use the notations $\| \cdot\|_{\alpha, [-n, n]^d}$ and $\| \cdot\|_{\Calpha([-n,n]^d)}$ introduced at the beginning of Section \ref{section : Construction et étude de la distribution : cas localement höldérien}.
Recall that the space $\Calpha_c(\R^d)$ endowed with the family $\{ \|\cdot\|_{\Calpha(\nn)} : n \in \N \}$ is a Fréchet space.
Since $\Calpha(\R^d)$ is a Banach algebra, Lemma  \ref{lem : prolongement de l'intégrale quand la fonction vaut 0} allows us to define the mapping
$$\begin{array}{rlcl}
\fdgd : & \Calpha_c(\R^d) & \longrightarrow & \R \\
& h & \longmapsto & \int_{\R^d} (fh) \dgd
\end{array}$$
where this integral can in fact be taken over any rectangle containing the support of $h$.

\begin{prop}
\label{prop : fdg forme linéaire sur Calpha}
The mapping $\fdgd$ defines a continuous linear form on the space $\Calpha_c(\R^d)$.
\end{prop}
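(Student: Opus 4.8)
The plan is to establish linearity and continuity separately. Linearity in $h$ is immediate: fix a rectangle $R$ large enough to contain the supports of $h_1$ and $h_2$; since $\C^{\alpha}(\R^d)$ is a Banach algebra, $fh_1$ and $fh_2$ both lie in $\C^{\alpha}(R)$, and the $(d+1)$-linearity of Züst's integral $\int_R$ (Theorem \ref{theo : intégrale de Zust}) gives $\int_R f(h_1+\mu h_2)\,\dgd = \int_R (fh_1)\,\dgd + \mu \int_R (fh_2)\,\dgd$. By Lemma \ref{lem : prolongement de l'intégrale quand la fonction vaut 0} the value does not depend on the choice of $R$, so $\fdgd$ is a well-defined linear form on $\C^{\alpha}_c(\R^d)$.

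For continuity with respect to the Fréchet topology, I would fix $n \in \N$ and show that $\fdgd$ restricted to $\{ h \in \C^{\alpha}_c(\R^d) : \supp(h) \subset \nn \}$ is bounded by a constant times $\|h\|_{\C^{\alpha}(\nn)}$. Take $R = \nn$. Apply the sewing inequality \eqref{eq : borne de couture dans le cas globalement hölder} to the function $fh$ in place of $f$, with some choice of exponents $\alpha' \le \alpha$, $\beta_i' \le \beta_i$ still satisfying \eqref{eq : alpha + beta > d}: this yields
$$
\left| \int_R (fh)\,\dgd \right| \le \left| (fh)(-n,\dots,-n) \int_R \dgd \right| + C\,\delta(R)^{\alpha'+\beta'}\, \|fh\|_{\alpha',R} \prod_{i=1}^d \|g^i\|_{\beta_i',R}.
$$
The first term on the right is bounded by $\|f\|_{\infty}\|h\|_{\infty}\, |\int_R \dgd|$, and $\int_R \dgd$ is a fixed finite number depending only on $g^1,\dots,g^d$ and $n$. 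For the second term, the Banach algebra estimate gives $\|fh\|_{\alpha',R} \le C_\alpha \|f\|_{\C^{\alpha'}(R)} \|h\|_{\C^{\alpha'}(R)}$, and by the embedding $\C^{\alpha}(R) \hookrightarrow \C^{\alpha'}(R)$ this is $\le C_{\alpha,n}\|f\|_{\C^{\alpha}}\|h\|_{\C^{\alpha}(\nn)}$. Collecting terms and absorbing all quantities depending only on $f, g^1,\dots,g^d$ and $n$ into a single constant $C(n, f, g)$, we get $|\langle \fdgd, h\rangle| \le C(n,f,g)\,\|h\|_{\C^{\alpha}(\nn)}$, which is exactly the continuity of the linear form on the Fréchet space $\C^{\alpha}_c(\R^d)$.

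The only mildly delicate point is making sure the sewing inequality can legitimately be applied with $fh$ in the role of the ``$f$'' argument: one must check that $fh \in \C^{\alpha}(R)$ (guaranteed by the Banach algebra property, since both $f|_R$ and $h|_R$ are in $\C^{\alpha}(R)$) and that the exponents $\alpha', \beta_1', \dots, \beta_d'$ can be chosen simultaneously $\le$ the given ones and satisfying $\alpha' + \sum \beta_i' > d$ — which is possible because \eqref{eq : alpha + beta > d} is an open condition, so one may even take $\alpha' = \alpha$, $\beta_i' = \beta_i$. Everything else is bookkeeping: the supremum defining $\|\cdot\|_{\alpha, R}$ over $R \subset \R^d$ is dominated by the one over all of $\R^d$, and $\delta(\nn) = 2n$ is a harmless constant. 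This completes the argument.
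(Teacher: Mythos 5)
Your proof is correct and follows essentially the same route as the paper: linearity from the multilinearity of Züst's integral together with Lemma \ref{lem : prolongement de l'intégrale quand la fonction vaut 0}, and continuity from the sewing inequality applied to $fh$ on $\nn$ combined with the Banach algebra estimate $\|fh\|_{\alpha,\nn}\le\|f\|_{\Calpha}\|h\|_{\Calpha(\nn)}$. The only cosmetic difference is that the paper observes that $h$ (hence $fh$) vanishes at the corner $(-n,\dots,-n)$ of the cube, so the germ term $f(a)\int_R\dgd$ drops out entirely, whereas you bound it by $\|f\|_{\infty}\|h\|_{\infty}\,\bigl|\int_R\dgd\bigr|$ — both are valid.
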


\begin{proof}
The linearity of this mapping follows from the linearity of Züst’s integral (see Theorem  \ref{theo : intégrale de Zust}) with respect to the first function, when the rectangle over which we integrate is fixed. We now prove continuity.
Let $n \in \N.$ Let $h \in \Calpha_c(\R^d)$ be a function whose support is contained in the cube $\nn.$
By definition, we have
$$ \langle \fdgd , h \rangle = \int_{\nn} (fh) \dgd.$$
Since the function $h$ vanishes on the boundary of the cube $\nn,$ we deduce from the sewing inequality $\eqref{eq : borne de couture dans le cas globalement hölder}$ the estimate
$$ \left| \int_{\nn} (fh) \dgd \right| \le C_n \|fh\|_{\alpha, \nn} \, ,$$
with $C_n = C\, n^{\alpha + \beta}  \prod_{i=1}^d \|g^i\|_{\beta_i}.$
The conclusion follows from the inequality
$$ \|fh\|_{\alpha,\nn} \le \|f\|_{\Calpha} \|h\|_{\Calpha(\nn)},$$
which is straightforward.
\end{proof}

Since $\D(\R^d)$ is included in $\Calpha_{c}(\R^d)$, the inequality $\|\cdot\|_{\Calpha(\R^d)} \le 3 \|\cdot\|_{\C^1(\R^d)}$ shows that the restriction of the mapping $\fdgd$ to $\D(\R^d)$ defines a distribution of order at most $1$ on $\R^d$. We will also denote by $\fdgd$ this distribution and recall that it is defined for any test function $\phi \in \D(\R^d),$ by
\begin{equation}
\label{eq : def de la distribution fdg1^...^dgd}
\langle \fdgd, \phi \rangle = \int_{\R^d} (f\phi) \dgd,
\end{equation}
where the integral is taken over any rectangle containing the support of $\phi$.
We now show that our distribution satisfies the desired identity \eqref{eq :  f dg1 ^...^dgd = fdet(dg)}.

\begin{lemme}
\label{lem : f dg1 ^...^dgd = fdet(dg) globalement hölder}
Suppose that the functions $g^1, \dots, g^d$ are Lipschitz and bounded on $\R^d$.
Then the distribution $\fdgd$ identifies with the essentially bounded function $f \det\left(\dg\right)$, in the sense that for every test function $\phi \in \D(\R^d)$, one has
$$ \langle \fdgd, \phi \rangle = \int_{\R^d} f(x) \det(\dg(x)) \phi(x) \dx. $$
\end{lemme}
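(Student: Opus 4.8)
The plan is to unwind the definition \eqref{eq : def de la distribution fdg1^...^dgd} of the distribution $\fdgd$ and then to invoke the first property of Züst's Theorem \ref{theo : intégrale de Zust}, but applied with the product $f\phi$ playing the role of the first function rather than $\phi$ alone.

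Concretely, I would fix a test function $\phi \in \D(\R^d)$ and a rectangle $R \subset \R^d$ containing $\supp(\phi)$. Since $\Calpha(\R^d)$ is a Banach algebra and $\D(\R^d) \subset \C^1(\R^d) \subset \Calpha(\R^d)$, the product $f\phi$ lies in $\Calpha(\R^d)$ and has compact support contained in $R$; combined with the facts that the $g^i$, being Lipschitz, lie in $\C^{\beta_i}(R)$ and that the exponents satisfy \eqref{eq : alpha + beta > d}, this makes $(f\phi, g^1, \dots, g^d)$ an admissible input for Züst's integral $\int_R$. By \eqref{eq : def de la distribution fdg1^...^dgd}, and since this integral does not depend on the chosen rectangle containing the support of the integrand (Lemma \ref{lem : prolongement de l'intégrale quand la fonction vaut 0}), one has $\langle \fdgd, \phi\rangle = \int_R (f\phi)\dgd$. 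Then the first property of Theorem \ref{theo : intégrale de Zust}, applied on $R$ to the Lipschitz functions $g^1, \dots, g^d$, yields
\[
\int_R (f\phi)\dgd = \int_R f(x)\phi(x)\det(\dg(x))\dx,
\]
where $\det(\dg)$ is defined $\lambda$-almost everywhere by Rademacher's theorem. Since $\phi$ vanishes outside $R$, the right-hand side equals $\int_{\R^d} f(x)\phi(x)\det(\dg(x))\dx$, which is the claimed identity; moreover $f\det(\dg)$ is essentially bounded because $f$ is bounded and each partial derivative $\partial_j g^i$ is bounded almost everywhere by the Lipschitz constant of $g^i$, so the distribution is indeed represented by this $L^\infty$ function.

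I do not expect a genuine obstacle here: the argument is essentially bookkeeping layered on top of results already available. The only points that deserve a little care are that it is $f\phi$, and not $\phi$ alone, that must be fed to Züst's integral as the first argument, and that the value $\langle \fdgd, \phi\rangle$ is unambiguous — both handled by the construction carried out just above, in particular by Lemma \ref{lem : prolongement de l'intégrale quand la fonction vaut 0} and \eqref{eq : def de la distribution fdg1^...^dgd}.
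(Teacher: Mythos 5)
Your proposal is correct and follows essentially the same route as the paper: unwind the definition \eqref{eq : def de la distribution fdg1^...^dgd} to write $\langle \fdgd,\phi\rangle=\int_R (f\phi)\dgd$ with $f\phi$ as the first argument, apply the Lipschitz-case property of Theorem \ref{theo : intégrale de Zust} on $R$, and use that $\phi$ vanishes outside $R$ together with almost-everywhere differentiability of $g$ to identify the result with the essentially bounded function $f\det(\dg)$. No gaps.
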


\begin{proof}
For all $i \in \{1,\dots,d\}$, the function $g^i$ is Lipschitz and bounded on $\R^d$, so it belongs to the space $\C^{\beta_i}(\R^d)$ and the distribution $\fdgd$ is well-defined. Moreover, a theorem of Lebesgue ensures that the function $g$ is differentiable almost everywhere and its Jacobian is bounded on $\R^d$. In particular, we deduce that the function $f\det\left(\dg\right)$ is bounded on $\R^d$.

Let $\phi \in \D(\R^d)$ and $R$ be a rectangle in $\R^d$ containing the support of $\phi$. By definition, we have
$$ \langle \fdgd , \phi \rangle = \int_{R} (f\phi) \dgd. $$
The second property of Züst’s theorem (Theorem~ \ref{theo : intégrale de Zust}) gives
$$ \int_{R} (f\phi) \dgd = \int_{R} f(x)\det(\dg(x)) \phi(x) \dx $$
and we conclude using the fact that the support of $\phi$ is included in $R$.
\end{proof}

\subsection{Regularity of the distribution}
\label{section : régularité de la distribution globale}
Our goal now is to study the regularity of the distribution $\fdgd.$
We show that this distribution belongs to a globally Hölder-type distribution space defined in Section \ref{section : Notations globales} (Definition \ref{def : espace de Besov globaux}), and we provide an upper bound on its Hölder norm in terms of those of the functions $f, g^1, \dots, g^d.$
\begin{prop}
\label{prop : régularité de la distribution globalement höldérien}
Let $\alpha' \in (0, \alpha], \beta_1'\in (0, \beta_1], \dots, \beta_d'\in (0, \beta_d]$
be real numbers satisfying $\eqref{eq : alpha + beta > d}.$ Set $\gamma' =  d - \sum_{i=1}^d \beta_i'.$
Then, the distribution $\fdgd$ belongs to the space $\C^{-\gamma'}(\R^d).$
Moreover, 
there exists a constant $C > 0$ such that, for all functions $f\in \Calpha(\R^d), g^1 \in \C^{\beta_1}(\R^d)$, $\dots, g^d \in \C^{\beta_d}(\R^d),$ we have
$$ \|\fdgd\|_{\C^{-\gamma'}} \le C\,\|f\|_{\C^{\alpha'}}\prod_{i=1}^d \|g_i\|_{\beta_i'}.$$
\end{prop}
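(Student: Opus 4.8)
The plan is to estimate the wavelet coefficients of the distribution $\fdgd$ directly, using the definition $\cijk(\fdgd) = 2^{dj}\langle \fdgd, \psiijk\rangle = 2^{dj}\int_{\R^d}(f\psiijk)\dgd$ and the coarse coefficients $c_k(\fdgd) = \int_{\R^d}(f\phi_k)\dgd$, and to control each of these via the sewing inequality \eqref{eq : borne de couture dans le cas globalement hölder}. Recall from \eqref{eq : norme Cgamma} that membership in $\C^{-\gamma'}(\R^d)$ with the stated norm bound amounts to showing $|c_k(\fdgd)| \le C\|f\|_{\C^{\alpha'}}\prod_i\|g^i\|_{\beta_i'}$ uniformly in $k$, and $|\cijk(\fdgd)| \le C\, 2^{\gamma' j}\|f\|_{\C^{\alpha'}}\prod_i\|g^i\|_{\beta_i'}$ uniformly in $i,j,k$ (note $-\gamma' $ gives the exponent $2^{-(-\gamma')j} = 2^{\gamma' j}$ on the right-hand side). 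So the whole proposition reduces to two one-line applications of the sewing bound, once we handle the domain of integration correctly.

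First I would treat the fine coefficients. Fix $i,j,k$. The function $h = f\psiijk$ has support in the cube $Q = \kNj$, whose side length is $\delta(Q) = N 2^{-j}$, and $h$ vanishes on $\partial Q$ since $\psiijk$ does. Applying the sewing inequality \eqref{eq : borne de couture dans le cas globalement hölder} on $R = Q$ with the $(f,g)$-slot occupied by $(h,g^1,\dots,g^d)$ — more precisely using that $\int_Q h\dgd$ is bounded by $C\delta(Q)^{\alpha'+\beta'}\|h\|_{\alpha',Q}\prod_i\|g^i\|_{\beta_i',Q}$ because the boundary term $h(\text{corner of }Q)\int_Q\dgd$ vanishes — gives
\begin{equation}
\label{eq : plan borne coeff fin}
\bigl|\cijk(\fdgd)\bigr| = 2^{dj}\Bigl|\int_Q h\dgd\Bigr| \le C\, 2^{dj}\,(N2^{-j})^{\alpha'+\beta'}\,\|f\psiijk\|_{\alpha',Q}\prod_{i=1}^d\|g^i\|_{\beta_i'}.
\end{equation}
It remains to estimate $\|f\psiijk\|_{\alpha',Q}$. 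Since $\psiijk(x) = \psi^{(i)}(2^jx-k)$ we have $\|\psiijk\|_\infty = \|\psi^{(i)}\|_\infty$ and, by the chain rule for Hölder seminorms, $\|\psiijk\|_{\alpha',Q} \le 2^{\alpha' j}\|\psi^{(i)}\|_{\alpha'}$; combined with $\|f\|_{\infty,Q}\le\|f\|_\infty$ and $\|f\|_{\alpha',Q}\le\|f\|_{\alpha'}$ and the Leibniz-type estimate $\|f\psiijk\|_{\alpha',Q}\le \|f\|_{\alpha',Q}\|\psiijk\|_{\infty}+\|f\|_{\infty,Q}\|\psiijk\|_{\alpha',Q}$, this yields $\|f\psiijk\|_{\alpha',Q} \le C\, 2^{\alpha' j}\|f\|_{\C^{\alpha'}}$ (absorbing the $\C^r$-norms of the fixed wavelets into $C$, using $\alpha'\le 1$ so that $2^{\alpha' j}$ dominates the constant term). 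Plugging this into \eqref{eq : plan borne coeff fin} and collecting powers of $2^j$ gives the exponent $dj - (\alpha'+\beta')j + \alpha' j = (d-\beta')j = \gamma' j$, which is exactly the required bound $|\cijk(\fdgd)|\le C\, 2^{\gamma' j}\|f\|_{\C^{\alpha'}}\prod_i\|g^i\|_{\beta_i'}$.

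For the coarse coefficients $c_k(\fdgd) = \int_{\R^d}(f\phi_k)\dgd$, the same computation applies on the unit-scale cube $[k,k+N]$ (side length $N$, independent of $k$): the boundary term again vanishes because $\phi_k$ vanishes on the boundary of its support, the sewing inequality gives $|c_k(\fdgd)|\le C\, N^{\alpha'+\beta'}\|f\phi_k\|_{\alpha',[k,k+N]}\prod_i\|g^i\|_{\beta_i'}$, and $\|f\phi_k\|_{\alpha',[k,k+N]}\le C\|f\|_{\C^{\alpha'}}$ with a constant independent of $k$ by translation invariance; hence $|c_k(\fdgd)|\le C\|f\|_{\C^{\alpha'}}\prod_i\|g^i\|_{\beta_i'}$ uniformly in $k$, as needed. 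The main technical point — and the only place any care is required — is the bookkeeping in the Hölder–product estimate $\|f\psiijk\|_{\alpha',Q}\lesssim 2^{\alpha' j}\|f\|_{\C^{\alpha'}}$ and checking that the constant genuinely does not depend on $i,j,k$ (it depends only on $d,r$ and the fixed wavelet functions, plus on $\alpha',\beta_1',\dots,\beta_d'$ through the sewing constant); everything else is a direct substitution. One should also remark that $\gamma'\notin\N$, so that $\C^{-\gamma'}(\R^d)=\B^{-\gamma'}_{\infty,\infty}(\R^d)$ is the space described in \eqref{eq : norme Cgamma}: this follows from $\beta' = \sum\beta_i' < d$ together with $\alpha'+\beta' > d$, which forces $\gamma' = d-\beta'\in(0,\alpha')\subset(0,1)$, hence $-\gamma'\in(-1,0)$ is indeed not an integer.
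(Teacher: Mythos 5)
Your proposal is correct and follows essentially the same route as the paper's proof: bound each wavelet coefficient by applying the sewing inequality on the cube $\kNj$ (the boundary term vanishing because $\psiijk$ vanishes on $\partial\kNj$), estimate $\|f\psiijk\|_{\alpha'}\le 2^{\alpha' j}\|f\|_{\C^{\alpha'}}\|\psi^{(i)}\|_{\C^{\alpha'}}$ by the Leibniz-type inequality, and collect the powers of $2^j$ to get $2^{\gamma' j}$, with the coarse coefficients $c_k$ handled identically at unit scale. The added remark that $\gamma'\in(0,\alpha')\subset(0,1)$ is a harmless supplement.
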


\begin{proof}
Set 
$$\beta' = \sum_{i=1}^{d} \beta_i' \quad \text{and} \quad K = N^{\alpha'+\beta'} \max_{i=1,\dots 2^d-1}\left( \|\phi\|_{\C^{\alpha'}(\R^d)}, \|\psi^{(i)}\|_{\C^{\alpha'}(\R^d)}\right).$$ 
Let $i \in \{1, \dots, 2^d -1\}, j \in \N \text{ and } k \in \Z^d.$ We shall now bound the wavelet coefficient $\cijk.$
The cube $\kNj$ contains the support of $\psiijk,$ hence we have the equality
$$\cijk = 2^{dj} \int_{\kNj} \left(f\psiijk\right) \dgd.$$
The sewing inequality \eqref{eq : borne de couture dans le cas globalement hölder}, the fact that the function $\psiijk$ vanishes on the boundary of the cube $\kNj,$ and the equality $\delta\big (\kNj\big) = \frac{N}{2^j}$ yield the inequality  
\begin{equation}
\label{eq : preuve régularité distribution global}
\big|\cijk\big| 
\le C\, N^{\alpha' + \beta'}\, 2^{(\gamma' - \alpha')j} 
\|f \psiijk\|_{\alpha'} 
\prod_{m=1}^d \|g^m\|_{\beta_m'}.
\end{equation}
where the constant $C > 0$ depends only on $d, \alpha',\beta_1',\dots, \beta_d'.$ 
From the inequality
$$ \|f \psiijk\|_{\alpha'} \le \|f\|_{\infty}\|\psiijk\|_{\alpha'} + \|\psiijk\|_{\infty}\|f\|_{\alpha'}$$
and the equalities 
$$ \|\psiijk\|_{\infty} = \|\psi^{(i)}\|_{\infty}, \qquad \|\psiijk\|_{\alpha'} = 2^{\alpha' j}\|\psi^{(i)}\|_{\alpha'}$$
we deduce the inequality 
$$\|f \psiijk\|_{\alpha'} \le 2^{\alpha' j} \|f\|_{\C^{\alpha'}} \|\psi^{(i)}\|_{\C^{\alpha'}}.$$
Inserting this inequality into \eqref{eq : preuve régularité distribution global}, we obtain
$$
\big|\cijk\big|
\le C\, K\, 2^{\gamma' j}
\|f\|_{\C^{\alpha'}} 
\prod_{m=1}^d \|g^m\|_{\beta_m'}.
$$ 
One proves in the same way the bound 
$$|c_k| \le C\, K\|f\|_{\C^{\alpha'}} \prod_{m=1}^d \|g^m\|_{\beta_i'}.$$
This implies
$$ \fdgd \in \C^{-\gamma'}(\R^d) \quad \text{ and } \quad \|\fdgd\|_{\C^{-\gamma'}} \le C\, K \|f\|_{\C^{\alpha'}}\prod_{i=1}^d \|g^i\|_{\beta_i'},$$
which completes the proof of the proposition.
\end{proof}

\subsection{Uniqueness and properties of the mapping $\M$}
\label{section : Unicité cas global}
In this section, we summarize the results obtained in the previous sections and complete the proof of Theorem $\ref{theo : distribution globale f dg1 ^....^ dgd}$.
Recall that for all functions 
$$ f\in \Calpha(\R^d), \, g^1 \in \C^{\beta_1}(\R^d), \dots,\, g^d \in \C^{\beta_d}(\R^d),$$ 
we have constructed a distribution $\fdgd$, defined in $\eqref{eq : def de la distribution fdg1^...^dgd}$, and shown in Proposition $\ref{prop : régularité de la distribution globalement höldérien}$ that this distribution belongs to the space $\C^{-\gamma}(\R^d).$
This implies that the mapping
 $$\begin{array}{rrcl}
\M :  &\Calpha(\R^d) \times  \C^{\beta_1}(\R^d) \times \dots \times \C^{\beta_d}(\R^d) & \longrightarrow & \C^{-\gamma}(\R^d)  \\
& (f, g^1, \dots, g^d) & \longmapsto &  \fdgd
\end{array}$$
is well defined.
This mapping is $(d+1)$-linear by virtue of the $(d+1)$-linearity of Züst’s integral on a fixed rectangle, recalled in Theorem $\ref{theo : intégrale de Zust}$.
On the one hand, we deduce from Lemma $\ref{lem : f dg1 ^...^dgd = fdet(dg) globalement hölder}$ that property \eqref{eq : prop 1 theo distribution globale f dg1 ^....^ dgd} of Theorem \ref{theo : distribution globale f dg1 ^....^ dgd} is satisfied.
On the other hand, Proposition \ref{prop : régularité de la distribution globalement höldérien} implies that the mapping $\M$ also satisfies property \eqref{eq : prop 2 theo distribution globale f dg1 ^....^ dgd} of Theorem \ref{theo : distribution globale f dg1 ^....^ dgd}.
It remains to prove the uniqueness of such a mapping.
To this end, we shall need the following approximation result.

\begin{lemme}[Global approximation lemma 1]
\label{lem : lemme d'approximation global}
Let $\alpha, \alpha' \in ( 0 ,1) $ with $\alpha' < \alpha.$ For every function $f\in \C^{\alpha}(\R^d),$ there exists a sequence of functions $(f_n)_{n\in\N} \in \C^{\infty}(\R^d)^{\N}$ such that
$$\|f_n\|_{\Calpha} \le \|f\|_{\Calpha} \quad \text{ and } \quad \underset{n \to \infty}{\lim} \|f-f_n\|_{\C^{\alpha'}} = 0.$$
\end{lemme}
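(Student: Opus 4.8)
The plan is to regularize $f$ by convolution with a mollifier. First I would fix a function $\rho \in \D(\R^d)$ with $\rho \ge 0$, $\supp(\rho) \subset B(0,1)$ and $\int_{\R^d}\rho = 1$; writing $\rho_{\epsilon} = \epsilon^{-d}\rho(\cdot/\epsilon)$ for $\epsilon > 0$, I would set $f_{\epsilon} = f * \rho_{\epsilon}$ and ultimately take $f_n = f_{\epsilon_n}$ for a sequence $\epsilon_n \downarrow 0$ to be specified. Since $f$ is bounded and $\rho_{\epsilon} \in \D(\R^d)$, differentiation under the integral sign gives $\partial^{k} f_{\epsilon} = f * \partial^{k}\rho_{\epsilon}$, which is bounded for every multi-index $k$; hence $f_{\epsilon} \in \C^{\infty}(\R^d)$ in the sense of Section~\ref{section : Notations globales}.

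Next I would check the norm bound $\|f_{\epsilon}\|_{\Calpha} \le \|f\|_{\Calpha}$. Because $\rho_{\epsilon} \ge 0$ integrates to $1$, we have $\|f_{\epsilon}\|_{\infty} \le \|f\|_{\infty}$, and from
$$f_{\epsilon}(x) - f_{\epsilon}(y) = \int_{\R^d}\bigl(f(x - z) - f(y - z)\bigr)\rho_{\epsilon}(z)\,\d z$$
one reads off $|f_{\epsilon}(x) - f_{\epsilon}(y)| \le \|f\|_{\alpha}\,|x - y|^{\alpha}$, hence $\|f_{\epsilon}\|_{\alpha} \le \|f\|_{\alpha}$; adding the two estimates yields the claim. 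This non-increase of the Hölder seminorm under mollification is the one slightly delicate point, and it is exactly why convolution (rather than, say, truncation) is the right tool here: it is what allows us to meet the requirement $\|f_n\|_{\Calpha} \le \|f\|_{\Calpha}$ imposed in the statement.

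Then I would establish uniform convergence: since $\rho_{\epsilon}$ is supported in $B(0,\epsilon)$,
$$|f(x) - f_{\epsilon}(x)| = \Bigl|\int_{\R^d}\bigl(f(x) - f(x - z)\bigr)\rho_{\epsilon}(z)\,\d z\Bigr| \le \|f\|_{\alpha}\int_{\R^d}|z|^{\alpha}\rho_{\epsilon}(z)\,\d z \le \|f\|_{\alpha}\,\epsilon^{\alpha},$$
so $\eta_{\epsilon} := \|f - f_{\epsilon}\|_{\infty} \to 0$ as $\epsilon \to 0$.

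Finally I would upgrade this to convergence in $\|\cdot\|_{\alpha'}$ by interpolation. For any bounded $g \in \Calpha(\R^d)$ and any $\delta > 0$, splitting the pairs $x \neq y$ according to whether $|x - y| \le \delta$ or $|x - y| > \delta$, and using $\alpha - \alpha' > 0$ and $\alpha' > 0$, one gets
$$\|g\|_{\alpha'} \le \max\bigl(\delta^{\alpha - \alpha'}\|g\|_{\alpha},\ 2\,\delta^{-\alpha'}\|g\|_{\infty}\bigr).$$
Applying this with $g = f - f_{\epsilon}$, and noting $\|f - f_{\epsilon}\|_{\alpha} \le \|f\|_{\alpha} + \|f_{\epsilon}\|_{\alpha} \le 2\|f\|_{\alpha}$, I would choose $\delta = \delta_{\epsilon} \to 0$ with $\eta_{\epsilon}\,\delta_{\epsilon}^{-\alpha'} \to 0$ (for instance $\delta_{\epsilon} = \sqrt{\eta_{\epsilon}}$): then both terms on the right tend to $0$, so $\|f - f_{\epsilon}\|_{\alpha'} \to 0$. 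Taking any $\epsilon_n \downarrow 0$ and $f_n = f_{\epsilon_n}$ completes the proof. No step presents a genuine obstacle; the argument is the classical mollification-plus-interpolation scheme, the one substantive observation being the contraction property of convolution on the sup and Hölder seminorms.
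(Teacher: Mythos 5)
Your proof is correct: the mollification gives $f_\epsilon\in\C^\infty(\R^d)$ with $\|f_\epsilon\|_{\Calpha}\le\|f\|_{\Calpha}$ by Jensen-type contraction, and the interpolation inequality $\|g\|_{\alpha'}\le\max\bigl(\delta^{\alpha-\alpha'}\|g\|_{\alpha},\,2\delta^{-\alpha'}\|g\|_{\infty}\bigr)$ combined with $\|f-f_\epsilon\|_\infty\le\|f\|_\alpha\,\epsilon^\alpha$ yields convergence in $\C^{\alpha'}$. The paper states this lemma without proof, and your argument is exactly the standard mollification-plus-interpolation scheme it relies on, so there is nothing further to compare.
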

We now consider an arbitrary $(d+1)$-linear mapping
$$\begin{array}{rrcl}
\B :  &\Calpha(\R^d) \times  \C^{\beta_1}(\R^d) \times \dots \times \C^{\beta_d}(\R^d) & \longrightarrow & \C^{-\gamma}(\R^d)  \\
& (f, g^1, \dots, g^d) & \longmapsto &  \B(f, g^1,\dots ,g^d)
\end{array}$$
satisfying properties \eqref{eq : prop 1 theo distribution globale f dg1 ^....^ dgd} and \eqref{eq : prop 2 theo distribution globale f dg1 ^....^ dgd} of Theorem \ref{theo : distribution globale f dg1 ^....^ dgd}.
For all functions $f\in \Calpha(\R^d)$, $ g^1 \in \C^{\beta_1}(\R^d)$, $\dots,\, g^d \in \C^{\beta_d}(\R^d),$ we shall prove that 
$$\B(f,g^1,\dots,g^d) = \M(f,g^1,\dots,g^d).$$
For each $i \in \{1, \dots, d\},$ let us choose a sequence of functions $(g^i_n)_{n\in\N}$ approximating the function $g^i$ in the sense of Lemma $\ref{lem : lemme d'approximation global}.$ 
The condition $\eqref{eq : alpha + beta > d}$ on the real numbers $\alpha, \beta_1, \dots, \beta_d$ implies that it is always possible to find real numbers 
$$\alpha' \in (0, \alpha), \beta_1'\in (0, \beta_1), \dots, \beta_d'\in (0, \beta_d) $$
satisfying the same condition. Set $\gamma ' = d - \sum_{i=1}^d \beta_i'.$  
For every $n \in \N,$ define 
$$\begin{array}{rlcl}
g_n :& \R^d & \longrightarrow & \R^d  \\
& x & \longmapsto & (g^1_n(x), \dots, g^d_n(x)) .
\end{array}.$$
On the one hand, each function $g^i_n$ is Lipschitz, hence property \eqref{eq : prop 1 theo distribution globale f dg1 ^....^ dgd} of Theorem $\ref{theo : distribution globale f dg1 ^....^ dgd}$ yields the equality 
$$ \M(f,g^1_n\dots,g^d_n) = f\det(\dg_n) = \B(f,g^1_n\dots,g^d_n).$$
On the other hand, each sequence of functions $(g^i_n)_{n\in\N}$ converges to the function $g^i$ in the space $\C^{\beta_i'}(\R^d).$ 
We therefore deduce from the $(d+1)$-linearity of the mappings $\M$ and $\B,$ and from property \eqref{eq : prop 2 theo distribution globale f dg1 ^....^ dgd} of Theorem $\ref{theo : distribution globale f dg1 ^....^ dgd},$ that the sequence of functions $(f\det(\d g_n))_{n\in\N}$ converges in the space $(\C^{-\gamma}(\R^d), \|\cdot\|_{\C^{-\gamma'}}) $ both to the distribution $\M(f,g^1,\dots,g^d)$ and to the distribution $\B(f,g^1,\dots,g^d),$ which yields the desired result.  

To conclude, we state several natural properties satisfied by the mapping $\M.$ These properties follow from the case where the functions $g^1,\dots, g^d$ are smooth and are proved using an approximation argument similar to the one we have just used. 
\begin{prop}
\label{prop : prop de l'appli M global}
The mapping $\M$ satisfies the following properties:
\begin{enumerate}[label=\arabic*.]
\item It is antisymmetric with respect to the functions $(g^i)_{i=1}^d.$
\item For all functions $f\in \C^{\alpha}(\R^d), g^1 \in \C^{\beta_1}(\R^d), \dots, g^d \in \C^{\beta_d}(\R^d),$ if one of the functions $g^i$ is constant, then 
$ \fdgd = 0.$
\end{enumerate}
\end{prop}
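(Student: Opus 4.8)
The plan is to deduce both properties from the smooth case, by the very same approximation scheme used to prove the uniqueness of $\M$ in Section~\ref{section : Unicité cas global}. Fix $f\in\Calpha(\R^d)$ and $g^1\in\C^{\beta_1}(\R^d),\dots,g^d\in\C^{\beta_d}(\R^d)$, and choose, as there, exponents $\alpha'\in(0,\alpha)$ and $\beta_i'\in(0,\beta_i)$ still satisfying \eqref{eq : alpha + beta > d}; put $\gamma'=d-\sum_{i=1}^d\beta_i'$. By Lemma~\ref{lem : lemme d'approximation global}, for each $i$ I would take a sequence $(g^i_n)_{n\in\N}\in\C^{\infty}(\R^d)^{\N}$ with $\|g^i_n\|_{\C^{\beta_i}}$ bounded uniformly in $n$ and $g^i_n\to g^i$ in $\C^{\beta_i'}(\R^d)$, and set $g_n=(g^1_n,\dots,g^d_n)$. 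Since each $g^i_n$ is Lipschitz, property~\eqref{eq : prop 1 theo distribution globale f dg1 ^....^ dgd} of Theorem~\ref{theo : distribution globale f dg1 ^....^ dgd} gives
\[
\M(f,g^1_n,\dots,g^d_n)=f\det(\dg_n),
\]
and the $(d+1)$-linearity of $\M$ together with the continuity estimate~\eqref{eq : prop 2 theo distribution globale f dg1 ^....^ dgd} (a standard telescoping over the $d$ arguments $g^i$) shows that $\M(f,g^1_n,\dots,g^d_n)\to\fdgd$ in $\C^{-\gamma'}(\R^d)$, hence in $\D'(\R^d)$.

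For the antisymmetry it is enough to treat a single transposition $\sigma$ exchanging two indices $i$ and $j$, since transpositions generate the symmetric group and $\sgn$ is multiplicative. Permuting the arguments of $\M$ according to $\sigma$ amounts, at the level of the approximations, to exchanging the rows $i$ and $j$ of the Jacobian matrix of $g_n$, which multiplies $\det(\dg_n)$ by $-1$; applying the displayed identity to both orderings and letting $n\to\infty$ then yields $\M(f,g^{\sigma(1)},\dots,g^{\sigma(d)})=-\fdgd$. For the vanishing property, if $g^{i_0}$ is constant I would simply keep $g^{i_0}_n=g^{i_0}$ for all $n$ (a constant function is of class $\C^{\infty}$, in particular Lipschitz) and approximate only the remaining $g^i$, $i\neq i_0$, as above; then the Jacobian matrix of $g_n$ has a vanishing $i_0$-th row, so $f\det(\dg_n)=0$ for every $n$, and passing to the limit gives $\fdgd=0$.

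The argument involves no genuine obstacle: the only subtlety is that the approximating sequences converge only in the weaker norms $\C^{\beta_i'}$, not in $\C^{\beta_i}$, so one must pass to the limit in $\C^{-\gamma'}(\R^d)$ rather than in $\C^{-\gamma}(\R^d)$ — this is exactly the loss of regularity built into the estimate~\eqref{eq : prop 2 theo distribution globale f dg1 ^....^ dgd}. Since both conclusions are identities between distributions, convergence in $\C^{-\gamma'}(\R^d)$ is more than enough to conclude.
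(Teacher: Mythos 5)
Your proof is correct and follows exactly the route the paper intends: the paper only remarks that these properties ``follow from the case where the functions $g^1,\dots,g^d$ are smooth and are proved using an approximation argument similar to the one we have just used'' for uniqueness, and your write-up supplies precisely those details (telescoping via the $(d+1)$-linearity and estimate~\eqref{eq : prop 2 theo distribution globale f dg1 ^....^ dgd}, antisymmetry of the determinant under row exchange, and freezing the constant $g^{i_0}$ so the Jacobian has a vanishing row). No gaps.
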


\section{Extension of Züst's integral to a general domain}
\label{section : Extension de l'intégrale de Züst à un domaine borné général}
Let $\alpha, \beta_1, \dots, \beta_d$ be real numbers satisfying \eqref{eq : alpha + beta > d}. Let $\beta$ and $\gamma$ be the reals defined by \eqref{eq : def de beta et gamma}.
In this section, we fix the functions
$$ f\in \Calpha(\R^d), \, g^1 \in \C^{\beta_1}(\R^d), \dots,\, g^d \in \C^{\beta_d}(\R^d), $$
and we define $\fdgd = \M(f,g^1,\dots,g^d)$ as the distribution obtained in Theorem  \ref{theo : distribution globale f dg1 ^....^ dgd}.
The goal of this section is to use this distribution to a give meaning to the integrals
$$ \int_{\Omega} \fdgd, $$
for certain Borel sets $\Omega \in \B(\R^d).$

\subsection{Definition of the integral}
We begin by recalling the following duality result between Besov spaces (Theorem $2.11.2$ of \citep{Tri1}).
\begin{theo}[Duality $\Bspq$]
For all $s \in \R, p \text{ and } q \in [1,\infty),$ 
$$ \left(\Bspq(\R^d)\right)' = \B^{-s}_{p',q'}(\R^d),$$
where $p'$ and $q'$ are the conjugate exponents of $p$ and $q.$
\end{theo}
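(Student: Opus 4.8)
The plan is to deduce this from the wavelet description of Besov spaces that we have taken as their definition, by passing to sequence spaces. Write $b^s_{p,q}$ for the space of pairs of real sequences $\lambda = \big((c_k)_{k\in\Z^d},\,(\cijk)_{j\in\N,\,1\le i\le 2^d-1,\,k\in\Z^d}\big)$ for which the quantity \eqref{eq : normes Bspq} is finite, equipped with that norm. By Definition \ref{def : espace de Besov globaux} the coefficient map $f \longmapsto \big((\ck(f)),(\cijk(f))\big)$ is, for the norm \eqref{eq : normes Bspq}, an isometry of $\Bspq(\R^d)$ into $b^s_{p,q}$; conversely, given $\lambda\in b^s_{p,q}$, I would check that the wavelet synthesis series $\sum_k c_k\phik + \sum_{j,i,k}\cijk\psiijk$ converges in $\D'(\R^d)$ to a distribution of order at most $r$ whose coefficients are $\lambda$, using the compact support and $\C^r$-smoothness of the wavelets together with the vanishing moments of the $\psi^{(i)}$ to make the tails summable against a fixed test function. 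Thus the coefficient map is an isometric isomorphism $\Bspq(\R^d)\xrightarrow{\ \sim\ } b^s_{p,q}$, and likewise $\B^{-s}_{p',q'}(\R^d)\cong b^{-s}_{p',q'}$, so it suffices to prove $(b^s_{p,q})' = b^{-s}_{p',q'}$.

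For $\lambda=((c_k),(\cijk))\in b^s_{p,q}$ and $\mu=((d_k),(e^{(i)}_{j,k}))\in b^{-s}_{p',q'}$ I would introduce the pairing $\langle\lambda,\mu\rangle = \sum_k c_k d_k + \sum_{j,i,k} 2^{-dj}\,\cijk\, e^{(i)}_{j,k}$. The arithmetic identity $(s-\tfrac dp)+(-s-\tfrac d{p'}) = -d$ lets one write $2^{-dj} = 2^{(s-d/p)j}\cdot 2^{(-s-d/p')j}$; Hölder's inequality in the index $(i,k)$ with exponents $(p,p')$, followed by Hölder in $j$ with exponents $(q,q')$, then gives $|\langle\lambda,\mu\rangle|\lesssim \|\lambda\|_{b^s_{p,q}}\,\|\mu\|_{b^{-s}_{p',q'}}$, hence a bounded injection $b^{-s}_{p',q'}\hookrightarrow (b^s_{p,q})'$. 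For surjectivity, given $L\in(b^s_{p,q})'$ I would evaluate $L$ at the unit sequences $\delta_k$ and $\delta^{(i)}_{j,k}$ and set $d_k = L(\delta_k)$, $e^{(i)}_{j,k} = 2^{dj}L(\delta^{(i)}_{j,k})$; since $p,q<\infty$ the finitely supported sequences are dense in $b^s_{p,q}$, so $L$ is determined by these values, and the scalar dualities $(\ell^p)'=\ell^{p'}$ and $(\ell^q)'=\ell^{q'}$, applied first within each level and then across levels, show that $\mu$ belongs to $b^{-s}_{p',q'}$ with $\|\mu\|_{b^{-s}_{p',q'}}\le\|L\|$ and represents $L$ through the pairing above.

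Composing the isomorphisms of the two previous steps yields $(\Bspq(\R^d))'=\B^{-s}_{p',q'}(\R^d)$. To identify the resulting duality with the expected one, I would invoke the Parseval identity in the orthonormal wavelet basis: for $f,g\in L^2(\R^d)$ one computes $\langle f,g\rangle_{L^2} = \sum_k\ck(f)\ck(g) + \sum_{j,i,k}2^{-dj}\cijk(f)\cijk(g)$, which is precisely the pairing above evaluated at the coefficient sequences of $f$ and $g$; taking $g$ to be a test function and expanding it in its own wavelet series then shows that the pairing agrees with the $\D'$–$\D$ pairing on $\Bspq(\R^d)\times\D(\R^d)$.

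I expect the genuine obstacle to be the first step — that every element of $b^s_{p,q}$ is the coefficient sequence of an honest distribution of order $\le r$ and that analysis and synthesis are mutually inverse on these spaces. This is the full force of the wavelet characterization of Besov spaces (Meyer \citep{ME92}), which we have however adopted as our definition, so here it may simply be quoted. Granting it, the rest is bookkeeping with $\ell^p$-spaces, the only load-bearing hypothesis being $p,q<\infty$, which is exactly what provides the density of finitely supported sequences needed for the surjective half of the duality.
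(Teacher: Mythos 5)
The paper does not prove this theorem at all: it is imported verbatim as Theorem~2.11.2 of Triebel's monograph \citep{Tri1}, so there is no internal argument to compare yours against. Your proof is the standard sequence-space argument and is essentially sound. The reduction to $b^s_{p,q}$ is legitimate here precisely because the paper \emph{defines} $\Bspq(\R^d)$ by the wavelet coefficients, so the analysis map is an isometry by fiat; the genuinely load-bearing ingredient, as you correctly flag, is that synthesis is inverse to analysis (convergence of $\sum_k c_k\phi_k+\sum_{j,i,k}\cijk\psiijk$ in $\D'$ to a distribution of order at most $r$ with the prescribed coefficients, and injectivity of the coefficient map on distributions of order at most $r$), which is the content of Meyer's wavelet characterization and may indeed be quoted. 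Your arithmetic is right: $(s-\tfrac dp)+(-s-\tfrac d{p'})=-d$ matches the factor $2^{-dj}$ coming from the normalization $\cijk=2^{dj}\langle f,\psiijk\rangle$, and the Parseval computation identifying your pairing with the $L^2$ (hence $\D'$--$\D$) pairing is correct. The hypothesis $p,q<\infty$ enters exactly where you say, in the density of finitely supported sequences needed for surjectivity of $b^{-s}_{p',q'}\to(b^s_{p,q})'$; note this covers the only case the paper actually uses, $p=q=1$ with $p'=q'=\infty$, yielding $(\B^{\gamma}_{1,1})'=\B^{-\gamma}_{\infty,\infty}=\C^{-\gamma}$. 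What your route buys over the paper's citation is a self-contained proof within the wavelet framework already set up in Section~2; what it costs is having to invoke (or prove) the full analysis--synthesis isomorphism, which the paper sidesteps by deferring the whole statement to Triebel.
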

For every Borel set $\Omega \in \B(\R^d)$ whose indicator function $\mathbb{1}_{\Omega}$ belongs to the space $\B^{\gamma}_{1,1}(\R^d),$ let
\begin{equation}
\label{eq : Int_Omega fdgd = < fdgd, 1_Omega> }
\int_{\Omega} \fdgd = \langle \fdgd, \mathbb{1}_{\Omega} \rangle.
\end{equation}
To justify this notation, we show that this integral coincides with Züst’s integral in the case where $\Omega$ is a rectangle.

\begin{prop}
Let $R =\prod_{i=1}^d [a_i, b_i]$ be a rectangle in $\R^d.$ Then, $\mathbb{1}_{R} \in \B^{\gamma}_{1,1}(\R^d)$ and 
$$\langle \fdgd , \mathbb{1}_{R} \rangle = \int_{R} \fdgd, $$
where the term on the right denotes the usual Züst integral defined in the setting of Theorem $\ref{theo : intégrale de Zust}$.
\end{prop}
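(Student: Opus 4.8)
The plan is to verify the claim in two movements: first that $\mathbb{1}_R \in \B^\gamma_{1,1}(\R^d)$, and then that the duality pairing $\langle \fdgd, \mathbb{1}_R\rangle$ equals Züst's integral $\int_R \fdgd$. For the first point, I would compute the wavelet coefficients of $\mathbb{1}_R$ directly. Since the wavelets $\psiijk$ have compact support in $\kNj$ and vanishing moments are irrelevant here (we only need the first, i.e.\ integration against a constant gives zero, which is property 2 of Theorem~\ref{theo : Base d'ondelettes de Daubechies} with $m=0$), the coefficient $\langle \mathbb{1}_R, \psiijk\rangle$ vanishes unless the support cube $\kNj$ meets the boundary $\partial R$, because on cubes entirely inside $R$ the integrand is $\psi^{(i)}$ times the constant $1$, whose integral is $0$, and on cubes entirely outside $R$ it is $0$. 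The number of relevant $k$ at scale $j$ is of order $2^{(d-1)j}$ (the boundary of a rectangle is $(d-1)$-dimensional), and each nonzero coefficient satisfies $|\langle \mathbb{1}_R,\psiijk\rangle| \le \|\psi^{(i)}\|_\infty\, 2^{-dj}$, so $|\cijk| = 2^{dj}|\langle \mathbb{1}_R,\psiijk\rangle| \le \|\psi^{(i)}\|_\infty$. Plugging into the $\B^\gamma_{1,1}$ norm from Definition~\ref{def : espace de Besov globaux} with $s = \gamma$, $p=q=1$, the scale-$j$ contribution is bounded by a constant times $2^{(\gamma - d)j} \cdot 2^{(d-1)j} = 2^{-j(d-\gamma+1-1)}\cdot 2^{?}$; more carefully, $2^{(\gamma - d)j}\cdot(\#k) \lesssim 2^{(\gamma-d)j}2^{(d-1)j} = 2^{(\gamma - 1)j}$, which is summable precisely when $\gamma < 1$, i.e.\ $\beta = d-\gamma > d-1$. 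This inequality follows from \eqref{eq : alpha + beta > d} since $\alpha < 1$ forces $\beta > d - \alpha > d-1$. The coarse-scale coefficients $c_k = \langle \mathbb{1}_R,\phi_k\rangle$ are uniformly bounded by $\|\phi\|_\infty N^d$ and nonzero for only finitely many $k$ since $R$ is bounded, so $\sum_k |c_k| < \infty$. Hence $\mathbb{1}_R \in \B^\gamma_{1,1}(\R^d)$.

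For the second movement, I would exhibit $\mathbb{1}_R$ as a limit, in $\B^\gamma_{1,1}(\R^d)$, of test functions $\phi_m \in \D(\R^d)$ for which both sides are easy to identify. Take $\phi_m$ supported in a fixed slightly larger rectangle $R'$, equal to $1$ on $R$, monotone, with $\phi_m \to \mathbb{1}_R$ pointwise and with wavelet coefficients controlled so that $\|\phi_m - \mathbb{1}_R\|_{\B^\gamma_{1,1}} \to 0$. By the duality pairing's continuity, $\langle \fdgd, \phi_m\rangle \to \langle \fdgd, \mathbb{1}_R\rangle$. On the other hand, by \eqref{eq : def de la distribution fdg1^...^dgd}, $\langle \fdgd, \phi_m\rangle = \int_{R'} (f\phi_m)\dgd$, and I would compare this to $\int_R \fdgd$ using the additivity of Züst's integral over rectangles (Proposition~\ref{prop : additivité sur les rectangles}) and the sewing inequality \eqref{eq : borne de couture dans le cas globalement hölder}: decompose $R'$ into $R$ and a controlled number of thin boundary rectangles whose widths shrink as $m\to\infty$; on $R$, $f\phi_m = f$; on each thin slab, the sewing bound gives a contribution of size $\lesssim \delta^{\alpha'+\beta'}\|f\phi_m\|_{\alpha',\text{slab}}$ with $\delta \to 0$, and one must check that the Hölder norms of $f\phi_m$ do not blow up faster than $\delta^{-(\alpha'+\beta')}$. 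Choosing $\phi_m$ with $\|\phi_m\|_{\C^{\alpha'}}$ growing like $\delta^{-\alpha'}$ (a smoothed indicator of width $\delta$) and taking $\alpha' + \beta'$ slightly above $d$ while keeping the slab count of order $\delta^{-(d-1)}$, the total slab contribution is $\lesssim \delta^{-(d-1)}\cdot \delta^{\alpha'+\beta'}\cdot \delta^{-\alpha'} = \delta^{\beta' - d + 1}$, which tends to $0$ since $\beta' > d-1$ can be arranged. Therefore $\int_{R'}(f\phi_m)\dgd \to \int_R \fdgd$, and comparing the two limits gives the equality.

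The main obstacle is the bookkeeping in this last comparison: one must simultaneously (i) choose the cutoff functions $\phi_m$ so that $\phi_m \to \mathbb{1}_R$ in the $\B^\gamma_{1,1}$ topology, which requires controlling wavelet coefficients of smoothed indicators at every scale, and (ii) control the Hölder norms $\|f\phi_m\|_{\alpha',\text{slab}}$ against the shrinking slab width in the sewing estimate, with the two requirements pulling in somewhat opposite directions (smoother cutoffs help (i) but worsen (ii), or vice versa). The reconciliation relies on the strict inequality $\beta > d-1$, which gives room to pick $\alpha', \beta_i'$ with $\alpha' + \beta' > d$ \emph{and} $\beta' > d-1$. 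An alternative, possibly cleaner route avoids cutoffs entirely: approximate instead the functions $g^i$ by Lipschitz functions $g^i_n$ bounded in $\C^{\beta_i'}$ (as in Lemma~\ref{lem : lemme d'approximation global}), so that by property~\eqref{eq : prop 2 theo distribution globale f dg1 ^....^ dgd} of Theorem~\ref{theo : distribution globale f dg1 ^....^ dgd} the distributions $f\,\d g_n^1\wedge\cdots\wedge\d g_n^d = f\det(\d g_n)$ converge to $\fdgd$ in $\C^{-\gamma'}(\R^d) = (\B^{\gamma'}_{1,1})'$, hence $\langle f\det(\d g_n), \mathbb{1}_R\rangle \to \langle \fdgd,\mathbb{1}_R\rangle$; simultaneously, in the Lipschitz case the left side is the ordinary Lebesgue integral $\int_R f\det(\d g_n)\,\d x$, which converges to Züst's integral $\int_R \fdgd$ by property~2 of Theorem~\ref{theo : intégrale de Zust} applied with the uniformly bounded sequences $(f)$, $(g^i_n)$. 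This second approach sidesteps the slab estimates and is the one I would write up.
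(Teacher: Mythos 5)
Your proposal is correct and the route you ultimately choose to write up — approximating the $g^i$ by smooth functions via Lemma~\ref{lem : lemme d'approximation global}, identifying both sides with the Lebesgue integral $\int_R f\det(\d g_n)\dx$ in the Lipschitz case, and passing to the limit using the continuity of Züst's integral on one side and the continuity of $\M$ in the $\C^{-\gamma'}$ norm on the other — is exactly the paper's proof. Your direct wavelet computation for $\mathbb{1}_R \in \B^{\gamma}_{1,1}(\R^d)$ is likewise just a specialization of the paper's Proposition~\ref{prop : Espace de Besov indicatrice en dim d} combined with Corollary~\ref{cor : dim boîte indicatrice appartient à B(gamma,1,1)}, which is how the paper handles that point.
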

To prove this proposition, and more precisely the fact that the indicator function of a rectangle $R$ belongs to the desired Besov space, we shall use a result established later (Proposition $\ref{cor : dim boîte indicatrice appartient à B(gamma,1,1)}$).

\begin{proof}
Recall that the upper box dimension (Definition $\ref{defi : dim boite sup}$) of the boundary of the rectangle $R$ is (less than or) equal to $d-1.$ Since $\beta > d-1,$ Proposition $\ref{cor : dim boîte indicatrice appartient à B(gamma,1,1)}$ implies that the function $\mathbb{1}_{R}$ indeed belongs to the space $\B^{\gamma}_{1,1}(\R^d).$

First assume that the functions $g^1, \dots, g^d$ are Lipschitz and bounded on $\R^d.$ The first property of Theorem $\ref{theo : intégrale de Zust}$ implies that Züst’s integral satisfies the following equality:
$$ \int_{R} \fdgd = \int_{R} f(x) \det(\dg(x)) \dx.$$ 
On the other hand, property \eqref{eq : prop 1 theo distribution globale f dg1 ^....^ dgd} of Theorem \ref{theo : distribution globale f dg1 ^....^ dgd} implies that the distribution $\fdgd$ identifies with the essentially bounded function $f\det(\dg).$ This yields the desired equality in the case where the functions $g^1, \dots, g^d$ are Lipschitz:
$$\langle \fdgd , \mathbb{1}_{R} \rangle = \int_{R} f(x) \det(\dg(x)) \dx = \int_{R} \fdgd.$$

Now assume that the functions $g^1, \dots, g^d$ are Hölder and bounded on $\R^d.$ For any integer $i \in \{1, \dots,d\},$ the approximation lemma $\ref{lem : lemme d'approximation global}$ allows us to approximate uniformly the function $g^i$ by a sequence of functions $(g^i_n)_{n\in \N}  \subset \C^{\infty}(\R^d)$ bounded in the space $C^{\beta_i}(\R^d).$ For fixed $n,$ we have shown that
\begin{equation}
\label{eq : preuve prop intégrale de zust coincide}
\langle f \dg^1_n \wedge \dots \wedge \dg^d_n , \mathbb{1}_{R} \rangle = \int_{R} f \dg^1_n \wedge \dots \wedge \dg^d_n.
\end{equation}
On the one hand, the continuity of Züst’s integral (Property $2$ of Theorem \ref{theo : intégrale de Zust}) implies that the right-hand side of this equality converges to the Züst integral $\int_{R} \fdgd.$ On the other hand,
condition $\eqref{eq : alpha + beta > d}$ implies that it is always possible to find real numbers 
$$ \beta_1'\in (0, \beta_1), \dots, \beta_d'\in (0, \beta_d) $$
so that the same condition is still satisfied with the indices $\alpha, \beta_1', \dots, \beta_d'.$ Set $\gamma ' = d - \sum_{i=1}^d \beta_i'.$ For each $i \in \{1, \dots, d \},$ the sequence of functions $(g^i_n)_{n\in\N}$ converges to the function $g^i$ in the space $\C^{\beta_i'}(\R^d),$ hence the continuity of the mapping $\M$ (Property \eqref{eq : prop 2 theo distribution globale f dg1 ^....^ dgd} of Theorem \ref{theo : distribution globale f dg1 ^....^ dgd}) implies that
the sequence of functions $(f \dg^1_n \wedge \dots \wedge \dg^d_n)_{n\in\N}$ converges in the space $(\C^{-\gamma}(\R^d), \|\cdot\|_{\C^{-\gamma'}})$ to the distribution $\fdgd.$ This implies that the left-hand side of equality \eqref{eq : preuve prop intégrale de zust coincide} converges to $\langle \fdgd, \mathbb{1}_{R} \rangle,$ and we deduce the desired result.
\end{proof}

Let us mention that Züst defines this integral in dimension $d$ when the domain of integration is a rectangle, or more generally, the image of a rectangle under a Lipschitz mapping (see \citep{Zu11}). Subsequently, Bouafia in \citep{BOU24} extended Züst’s integral to any bounded domain of finite perimeter. Condition \eqref{eq : condition besov pour l'intégral} is more general, which follows from Theorem 2 of \citep{Sic21}.  
Independently and using differents methods,  Alberti, Stepanov and Trevisan in \citep{AlSTTRE24} extended Züst’s integral, in dimension $2,$ to any bounded domain $\Omega \subset \R^2$ whose boundary has upper box dimension satisfying the condition
$$ \dimb(\partial \Omega) < \beta_1 + \beta_2.$$
Recall that in dimension $2,$ the boundary of a (non-degenerate) rectangle has dimension equal to $1,$ hence the boundary of its image under a (non-constant) Lipschitz mapping also has dimension $1.$ 
However, the real number $\beta_1 + \beta_2$ is strictly larger than $1,$ so the result of Alberti, Stepanov and Trevisan allows us to integrate $f \dg^1 \wedge \dg^2$ over sets with a fractal boundary of dimension larger than $1.$  
We shall show in Corollary \ref{cor : dim boîte indicatrice appartient à B(gamma,1,1)} that the geometric condition \eqref{eq : indicatrice appartient à B(gamma,1,1)} generalizes that of Alberti, Stepanov and Trevisan; thus, it concerns the Lebesgue boundary of the domain rather than the usual boundary (see Figure \ref{fig:objet_3}), and it is more precise than a condition involving only the upper box dimension (see the detailed example in Section \ref{section : graphe fonction hölder}).

\subsection{Regularity of an indicator function}
Let $\Omega\in \B(\R^d)$ be a bounded domain.
The aim of this section is to find a geometric condition, both simple and general, on the domain $\Omega$ ensuring that its indicator function belongs to a given Besov space. Let us mention that studying the regularity of an indicator function is an important problem in analysis; we refer to the papers by Sickel and coauthors \citep{Sic21} and \citep{Sic24} for recent results on this topic. 
Although Proposition \ref{prop : Espace de Besov indicatrice en dim d} can be deduced from a lemma of Sickel (Lemma 7 of \citep{Sic21}), we choose here to provide a simple proof by wavelets. This approach allowed us to introduce the notion of the Lebesgue boundary, which does not explicitly appear in the aforementioned papers.

The fact that the function $\mathbb{1}_{\Omega}$ and the wavelet $\phi$ have compact support implies that the function $\mathbb{1}_{\Omega}$ has only finitely many nonzero coefficients $c_k.$ Indeed, if the support of the wavelet $\phi_k$ does not intersect $\Omega,$ then the coefficient $c_k$ is zero. Fix $i \in \lbrace 1, \dots, 2^d-1 \rbrace.$ The function $\psi^{(i)}$ also has compact support but, moreover, has zero integral. If $j \ge 0$ is fixed, we therefore observe that the wavelet coefficient $\cijk$ is zero if the support of the wavelet $\psiijk$ does not intersect the boundary of the set $\Omega.$ In fact, in this case, the support of $\psiijk$ is contained either in $\Omega,$ or in $\Omega^{c}.$ We can make this argument more precise: this coefficient is zero as soon as 
$$ \lambda \big( \supp\big(\psiijk\big) \cap  \Omega \big) = 0 \quad \text{ or } \quad \lambda\big( \supp\big(\psiijk\big) \cap \Omega^c \big) = 0,$$
where $\lambda$ denotes the Lebesgue measure on $\R^d.$
This motivates the following definition. 
\begin{defi}
\label{def : frontière de Lebesgue}
Let $\Omega \in \B(\R^d).$ The Lebesgue boundary of $\Omega,$ denoted by $\partial^{\ast} \Omega,$ is the set of points $x \in \R^d$ such that
$$ \forall r > 0 , \quad  \lambda\big( B(x,r) \cap \Omega\big) > 0 \text{ and } \lambda\big( B(x,r) \cap \Omega^c\big) > 0.$$
\end{defi}

A natural question is whether the Lebesgue boundary of a domain is always the usual boundary of some other domain. We shall answer this question positively in Theorem \ref{theo : frontière de Lebesgue} and use this result in the proof of the following theorem.  
In the proposition below, the notation $N_j$ is the one introduced in $\eqref{eq : def de Nj(A)}.$ 

\begin{prop}
\label{prop : Espace de Besov indicatrice en dim d}
Let $\Omega \in \B(\R^d)$ be a bounded Borel set.  
The following statements hold:
\begin{enumerate}[label=\arabic*.]
\item The function $\mathbb{1}_{\Omega}$ belongs to the space $\B^{0}_{\infty,\infty}(\R^d)$.
\item Let $s \in \R$ and $p \in (0, \infty)$. If
\[
\sup_{j \in \N}\, 2^{(sp - d)j}\, N_j(\partial^{\ast} \Omega) < \infty,
\]
then $\mathbb{1}_{\Omega} \in \B^{s}_{p,\infty}(\R^d)$.
\item Let $s \in \R$ and $p, q \in (0, \infty)$. If
\[
\sum_{j = 0}^{\infty} \left( 2^{(sp - d)j} N_j(\partial^{\ast} \Omega) \right)^{\frac{q}{p}} < \infty,
\]
then $\mathbb{1}_{\Omega} \in \Bspq(\R^d)$.
\end{enumerate}
\end{prop}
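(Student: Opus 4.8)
The strategy is to estimate the wavelet coefficients of $\mathbb{1}_{\Omega}$ directly, using the key observation made just before Definition~\ref{def : frontière de Lebesgue}: the coefficient $\cijk$ of $\mathbb{1}_\Omega$ vanishes unless the support of $\psiijk$ meets the Lebesgue boundary $\partial^{\ast}\Omega$ in the measure-theoretic sense, so in particular unless the cube $\kNj$ intersects $\partial^{\ast}\Omega$. First I would record the trivial uniform bounds on the coefficients themselves. For any $i,j,k$,
$$
|\cijk| = 2^{dj}\left| \int_{\R^d} \mathbb{1}_\Omega(x)\, \psiijk(x)\dx \right| \le 2^{dj} \cdot \lambda\big(\supp(\psiijk)\big) \cdot \|\psi^{(i)}\|_\infty = N^d \|\psi^{(i)}\|_\infty,
$$
and similarly $|c_k| \le N^d \|\phi\|_\infty$, since $\supp(\psiijk)$ has measure $(N/2^j)^d$. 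Because $\Omega$ is bounded, only finitely many $c_k$ are nonzero, and each is bounded; this gives statement~1, namely $\mathbb{1}_\Omega \in \B^0_{\infty,\infty}(\R^d)$, directly from Definition~\ref{def : espace de Besov globaux} (or the characterization \eqref{eq : norme Cgamma}).

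For statements~2 and~3 the point is to count, at each scale $j$, how many coefficients $\cijk$ can be nonzero. By the discussion preceding Definition~\ref{def : frontière de Lebesgue}, $\cijk \neq 0$ forces $\lambda(\supp(\psiijk)\cap\Omega) > 0$ and $\lambda(\supp(\psiijk)\cap\Omega^c) > 0$; an elementary argument (if a point of $\supp(\psiijk)$ has a neighbourhood of full measure in $\Omega$ or in $\Omega^c$, one can cover $\supp(\psiijk)$ by finitely many such neighbourhoods and conclude one of the two intersections is null) shows that then $\supp(\psiijk)$ must contain a point of $\partial^{\ast}\Omega$. Since $\supp(\psiijk) = \kNj$ is the union of $N^d$ dyadic cubes of generation $j$, the cube $\kNj$ meets $\partial^{\ast}\Omega$ only if one of these $N^d$ dyadic subcubes meets $\partial^{\ast}\Omega$; hence the number of $k \in \Z^d$ for which $\cijk$ can be nonzero is at most $N^d\, N_j(\partial^{\ast}\Omega)$ (each dyadic cube of generation $j$ meeting $\partial^{\ast}\Omega$ is one of the $N^d$ subcubes of at most $N^d$ relevant translates). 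Summing over the $2^d - 1$ values of $i$, the number of nonzero coefficients at scale $j$ is $\lesssim_d N^d\, N_j(\partial^{\ast}\Omega)$.

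Now plug these two facts into the Besov norm from Definition~\ref{def : espace de Besov globaux}. For the $\ell^\infty$ part over $j$ (statement~2), using $|\cijk| \le N^d\|\psi^{(i)}\|_\infty =: K$ on the at most $\lesssim_d N^d N_j(\partial^{\ast}\Omega)$ nonzero coefficients,
$$
\sum_{k\in\Z^d}\sum_{i=1}^{2^d-1}\left|2^{(s-\frac dp)j}\cijk\right|^p \le K^p\, 2^{(sp-d)j}\cdot C_d N^d\, N_j(\partial^{\ast}\Omega),
$$
which is bounded uniformly in $j$ precisely under the hypothesis $\sup_j 2^{(sp-d)j}N_j(\partial^{\ast}\Omega) < \infty$; together with the finiteness of $\sum_k |c_k|^p$ (finitely many bounded terms) this gives $\mathbb{1}_\Omega \in \B^s_{p,\infty}(\R^d)$. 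For statement~3 one raises the same quantity to the power $q/p$ and sums over $j$:
$$
\sum_{j=0}^\infty\left(\sum_{k}\sum_{i}\left|2^{(s-\frac dp)j}\cijk\right|^p\right)^{q/p} \le (C_d N^d K^p)^{q/p}\sum_{j=0}^\infty\left(2^{(sp-d)j}N_j(\partial^{\ast}\Omega)\right)^{q/p},
$$
which is finite exactly under the stated summability hypothesis, and again the $c_k$ contribute a finite sum; hence $\mathbb{1}_\Omega \in \Bspq(\R^d)$.

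\textbf{Main obstacle.} The only genuinely non-routine point is the measure-theoretic step: upgrading the vanishing criterion ``$\lambda(\supp(\psiijk)\cap\Omega)=0$ or $\lambda(\supp(\psiijk)\cap\Omega^c)=0$'' to the cleaner geometric statement ``$\supp(\psiijk)\cap\partial^{\ast}\Omega=\varnothing$ implies $\cijk=0$''. This is where the definition of $\partial^{\ast}\Omega$ is used and where the covering/compactness argument sketched above is needed — everything after that is the standard wavelet-counting computation. One should also take a moment to note that $\partial^{\ast}\Omega$ is bounded (it is contained in $\overline{\Omega}$, which is compact since $\Omega$ is bounded), so $N_j(\partial^{\ast}\Omega)$ is finite for every $j$ and all the sums above make sense.
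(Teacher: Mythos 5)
Your proof is correct and reaches the same counting estimates as the paper, but it handles the key measure-theoretic step differently. The paper first proves Theorem~\ref{theo : frontière de Lebesgue} (existence of $\widetilde\Omega$ with $\lambda(\Omega\triangle\widetilde\Omega)=0$ and $\partial\widetilde\Omega=\partial^{\ast}\Omega$), observes that $\mathbb{1}_{\Omega}$ and $\mathbb{1}_{\widetilde\Omega}$ have the same wavelet coefficients, and then applies the standard topological criterion ($\cijk=0$ when $\kNj\cap\partial\widetilde\Omega=\varnothing$) to $\widetilde\Omega$. You instead upgrade the vanishing criterion directly: if $\kNj\cap\partial^{\ast}\Omega=\varnothing$, cover the compact cube by balls on which $\Omega$ or $\Omega^c$ is null and conclude one of the two intersections is null. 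This is a legitimate shortcut that makes the proposition self-contained and independent of Theorem~\ref{theo : frontière de Lebesgue}; what it loses is the structural statement about $\widetilde\Omega$, which the paper wants anyway for its discussion of the Lebesgue boundary. One point in your covering argument needs to be made explicit: a finite cover alone does not yield the conclusion, because the cover could mix balls of the two types. You must note that a ball $B_i$ with $\lambda(B_i\cap\Omega)=0$ and a ball $B_j$ with $\lambda(B_j\cap\Omega^c)=0$ have intersection of measure zero, hence empty (it is open); so the two unions $U_A$, $U_B$ are disjoint open sets covering the \emph{connected} cube $\kNj$, and connectedness forces the cube into one of them. Without connectedness of the support the implication is false (take a support made of two disjoint balls, one essentially inside $\Omega$ and one essentially outside). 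The rest — the uniform bound $|\cijk|\le N^d\|\psi^{(i)}\|_\infty$, the count $\le N^d N_j(\partial^{\ast}\Omega)$ of nonzero coefficients per scale, the boundedness of $\partial^{\ast}\Omega\subset\overline\Omega$, and the insertion into the Besov norms — matches the paper's computation.
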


\begin{proof}
Let $k = (k_1, \dots, k_d) \in \Z^d.$ Since the set $\Omega$ is bounded, there exists an integer $M \in \N$ such that
$$ \Omega \subset [-M,M]^d.$$
Set
$$ K =  \max_{i=1,\dots 2^d-1} (\|\phi\|_1, \|\psi^{(i)}\|_1).$$
Recall that the support of the functions $\phi, \psi^{(1)}, \dots, \psi^{(2^d-1)}$ is the cube $[0,N]^d,$ hence
if the cube $[k, k+N]$ does not intersect the cube $[-M,M]^d,$ then the coefficient $c_k$ is zero. We deduce that the coefficient $c_k$ is nonzero only if
$$ \forall l \in \{1, \dots, d\}, \,  [k_l, k_l +N] \cap [-M,M] \neq \varnothing.$$
We therefore obtain the bound
$$\Card \big( \lbrace k \in \Z^d : c_k \neq 0 \rbrace \big) \le \left( 2M+N+1 \right)^d.$$
From the inequality 
$$|c_k| =  \left|\int_{\R^d} \mathbb{1}_{\Omega}\left(x\right) \phi\left(x-k\right) \dx\right| \le K,$$
we deduce the bounds
$$ \sum_{k\in\Z^d} |c_k|^p \le K^p\, (2M + N +1)^d, \text{ if } p \in (0, \infty) $$
and 
$$ \underset{k\in\Z^d}{\sup} \, |c_k| \le K, \text{ if } p= \infty.$$

Let us now turn to the study of the coefficients $\cijk.$ If $p$ is infinite, for all $i,j \text{ and } k,$ we have 
$$ \big|\cijk\big| = \left| \int_{\R^d} \mathbb{1}_{\Omega}\left( \frac{u+k}{2^j} \right) \psi^{(i)}(u) \d u\right| \le K.$$
The function $\mathbb{1}_{\Omega}$ therefore belongs to the space $\B^{0}_{\infty,\infty}(\R^d),$ which proves the first statement.
Let us now turn to the second one.

Consider the case $p\in (0,\infty).$ Fix $j \in \N.$ The wavelet $\psiijk$ has zero integral and its support is the cube $\kNj,$ hence we deduce that if the intersection between $\partial \Omega$ and $ \kNj$ is empty, then 
$$ \kNj \subset \ring \Omega \qquad \text{ or } \qquad \kNj \cap \overline{\Omega} = \varnothing,$$
and in both cases the coefficient $\cijk$ is zero. The coefficient $\cijk$ is thus nonzero only if the condition 
\begin{equation}
\label{eq : preuve thm indicatrice cond cijk non nul}
\kNj \cap \partial \Omega \neq \varnothing
\end{equation} 
is satisfied.
However, we have noted (see the discussion preceding the definition of the Lebesgue boundary) that the coefficient $\cijk$ is also zero if  
$$\lambda\big( \supp\big(\psiijk\big) \cap \Omega^c \big) = 0 \quad \text{ or } \quad    \lambda \big( \supp\big(\psiijk\big) \cap  \Omega \big) = 0.$$
Now Theorem $\ref{theo : frontière de Lebesgue}$ implies the existence of a set $\widetilde{\Omega} \in \B(\R^d)$ satisfying the following properties 
$$ \lambda\big( \Omega \triangle \widetilde\Omega\big) = 0 \quad \text{ and } \quad \partial \widetilde\Omega = \partial^{\ast} \Omega.$$ 
The functions $\mathbb{1}_{\Omega}$ and $\mathbb{1}_{\widetilde\Omega}$ therefore have the same wavelet coefficients, and condition $\eqref{eq : preuve thm indicatrice cond cijk non nul}$ applied to $\widetilde \Omega$ yields a more precise result: the coefficient $\cijk$ is nonzero only if the condition 
$$ \kNj \cap \partial^{\ast} \Omega \neq \varnothing$$ is satisfied.
With the integers $i$ and $j$ still fixed, we deduce the bounds  
\begin{equation}
\label{eq : preuve theo indicatrice en dim d}
\Card\left(\big\lbrace k \in \Z^d : \cijk \neq 0 \big\rbrace\right) \le \Card\left( \big\lbrace k \in \Z^d : \partial^{\ast} \Omega \cap \kNj \neq \varnothing \big\rbrace\right) \le N^d N_j(\partial^{\ast}\Omega),
\end{equation}
the second bound coming from the fact that $N_j(\partial^{\ast}\Omega)$ counts the dyadic cubes of side $2^{-j}$ that intersect $\partial^{\ast} \Omega.$
From this bound and the same reasoning as before, we deduce the inequality 
$$\sum_{i=1}^{2^d-1} \sum_{k \in \Z^d} \big|\cijk\big|^p \le C N_j(\partial^{\ast}\Omega),$$
where $C = (2^d-1)N^d K^p.$ 
Let now $s\in \R.$ We can now conclude. If there exists a constant $C > 0$ such that for all $j \in \N$ we have
$$ 2^{(sp-d)j}N_j(\partial^{\ast}\Omega) \le C,$$
then $\mathbb{1}_{\Omega} \in \B^{s}_{p,\infty}(\R^d).$
Similarly, if $q \in (0, \infty)$ and if the condition 
$$ \sum_{j=0}^{\infty} \left(2^{(sp-d)j}N_j(\partial^{\ast}\Omega)\right) ^{\frac{q}{p}} < \infty$$
is satisfied, then $\mathbb{1}_{\Omega} \in \Bspq(\R^d).$
\end{proof}

By applying Proposition $\ref{prop : Espace de Besov indicatrice en dim d}$ in the case of interest to us, namely when the exponents $s, p, q$ are equal to $\gamma, 1, 1,$ we obtain the geometric criterion \eqref{eq : indicatrice appartient à B(gamma,1,1)}. 

\subsection{Lebesgue boundary}
Let $\Omega \in \B(\R^d).$ Informally, the Lebesgue boundary of the set $\Omega$ (Definition $\ref{def : frontière de Lebesgue}$) is the set of points that remain on the boundary after adding to or removing from $\Omega$ a set of measure zero. 
This condition is of course consistent with the wavelet definition of Besov spaces \ref{def : espace de Besov globaux}.

We now address the question mentioned after the definition of the Lebesgue boundary, which allowed us to prove the previous theorem, namely whether the Lebesgue boundary is also the usual boundary of some domain. 
Finally, we shall illustrate with an example the fact that the Lebesgue boundary is a more relevant notion for applying the criterion \eqref{eq : indicatrice appartient à B(gamma,1,1)}.

\begin{theo}
\label{theo : frontière de Lebesgue}
Let $\Omega \in \B(\R^d)$ be a Borel set with Lebesgue boundary $\partial^{\ast} \Omega.$ Then, there exists a Borel set $\widetilde\Omega \in \B(\R^d)$ satisfying the following properties: 
$$ \lambda \big( \Omega \triangle \widetilde\Omega \big) = 0  \quad \text{ and } \quad \partial^{\ast} \Omega = \partial \Omegatilde.$$
\end{theo}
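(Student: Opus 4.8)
The plan is to define $\widetilde\Omega$ directly in terms of Lebesgue density and then verify the two required properties. For $x \in \R^d$, consider the (upper or lower) density
\[
\Theta(x) = \limsup_{r \to 0^+} \frac{\lambda\bigl(B(x,r) \cap \Omega\bigr)}{\lambda\bigl(B(x,r)\bigr)},
\]
and set $\widetilde\Omega = \{x \in \R^d : \Theta(x) > 0\}$, or more robustly $\widetilde\Omega = \{x : \overline\Theta(x) = 1\}$ where $\overline\Theta$ is the upper density; the precise choice will be dictated by what makes the boundary computation cleanest, and I would experiment with both. First I would check measurability of $\widetilde\Omega$: the map $x \mapsto \lambda(B(x,r)\cap\Omega)$ is continuous for fixed $r$, so $\Theta$ is Borel measurable as a countable lim sup over rational radii, hence $\widetilde\Omega \in \B(\R^d)$.

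Next I would establish $\lambda(\Omega \triangle \widetilde\Omega) = 0$. This is exactly the Lebesgue density theorem: for $\lambda$-a.e.\ $x \in \Omega$ the density is $1$ (so $x \in \widetilde\Omega$), and for $\lambda$-a.e.\ $x \in \Omega^c$ the density of $\Omega$ at $x$ is $0$ (so $x \notin \widetilde\Omega$, provided $\widetilde\Omega$ is defined via a strictly positive density threshold). So $\Omega \setminus \widetilde\Omega$ and $\widetilde\Omega \setminus \Omega$ are both $\lambda$-null. A useful consequence to record immediately is that, because $\lambda(\Omega \triangle \widetilde\Omega) = 0$, for every $x$ and every $r > 0$ we have $\lambda(B(x,r) \cap \Omega) = \lambda(B(x,r) \cap \widetilde\Omega)$ and likewise for the complements; hence $\partial^{\ast}\Omega = \partial^{\ast}\widetilde\Omega$, and it suffices to prove $\partial^{\ast}\widetilde\Omega = \partial\widetilde\Omega$, i.e.\ that the set $\widetilde\Omega$ is its own ``Lebesgue-regularized'' version.

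The heart of the matter — and the step I expect to be the main obstacle — is the equality $\partial^{\ast}\Omega = \partial\widetilde\Omega$. The inclusion $\partial^{\ast}\Omega \subset \partial\widetilde\Omega$ should go as follows: if $x \in \partial^{\ast}\Omega$ then every ball $B(x,r)$ meets both $\Omega$ and $\Omega^c$ in positive measure, hence (by the density theorem applied inside that ball) contains points of density $1$ for $\Omega$ and points of density $0$; the former lie in $\widetilde\Omega$ and the latter in $\widetilde\Omega^{c}$, so $x \in \partial\widetilde\Omega$. For the reverse inclusion $\partial\widetilde\Omega \subset \partial^{\ast}\Omega$, suppose $x \notin \partial^{\ast}\Omega$; then there is $r_0 > 0$ with, say, $\lambda(B(x,r_0) \cap \Omega^c) = 0$ (the other case is symmetric). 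Then every point of $B(x,r_0)$ has density $1$ for $\Omega$, so $B(x,r_0) \subset \widetilde\Omega$ (at least for the threshold-type definition), whence $x \in \ring{\widetilde\Omega}$ and $x \notin \partial\widetilde\Omega$. The delicate point requiring care is the open-ness claim ``$B(x,r_0) \subset \widetilde\Omega$'': a point $y \in B(x,r_0)$ close to the sphere $\partial B(x,r_0)$ might see mass of $\Omega^c$ coming from outside $B(x,r_0)$, so one must instead argue that $y$ has a \emph{small} ball contained in $B(x,r_0)$ and conclude density $1$ from that; this forces the density definition and the topological bookkeeping to be set up compatibly, and is where I would spend the most effort. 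Once both inclusions are in hand, combining with $\partial^{\ast}\Omega = \partial^{\ast}\widetilde\Omega$ (or arguing directly) yields $\partial^{\ast}\Omega = \partial\widetilde\Omega$, completing the proof.
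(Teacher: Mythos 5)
Your proposal is correct, but it takes a genuinely different route from the paper. The paper does not touch the Lebesgue density theorem: it partitions $\partial\Omega$ into $\partial^{\ast}\Omega$, the set $\partialint\Omega$ of boundary points having a ball meeting $\Omega^c$ in a null set, and the symmetric set $\partialext\Omega$, then defines $\Omegatilde = (\Omega\cup\partialint\Omega)\setminus\partialext\Omega$ — i.e.\ it modifies $\Omega$ only on its topological boundary — and proves $\lambda(\partialint\Omega\cap\Omega^c)=0$ by covering this set with the countably many rational balls $B(x,r)$, $x\in\Q^d$, $r\in\Q_+^{\ast}$, satisfying $\lambda(B(x,r)\cap\Omega^c)=0$. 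The final step, that $\partialint\Omegatilde$ and $\partialext\Omegatilde$ are empty, is exactly the ``nested small ball'' argument you isolate as the delicate point, so the two proofs converge there. Your density-point definition ($\Omegatilde$ = points of positive, or full, upper density) is the canonical measure-theoretic representative of the class of $\Omega$ modulo null sets; it is choice-free and arguably cleaner, and both of your candidate thresholds do work (a.e.\ point of $\Omega$ has density $1$, a.e.\ point of $\Omega^c$ has density $0$, and these facts drive both the null symmetric difference and the inclusion $\partial^{\ast}\Omega\subset\partial\Omegatilde$), but the hedging between them should be resolved in a final write-up. What the paper's construction buys is self-containedness — it needs only the countability of $\Q^d\times\Q_+^{\ast}$ and countable subadditivity of $\lambda$, whereas yours imports the Lebesgue density theorem; what yours buys is a shorter, more conceptual definition of $\Omegatilde$ that makes the identity $\lambda(\Omega\triangle\Omegatilde)=0$ immediate.
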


\begin{proof}
The idea of the proof is as follows. We begin by partitioning the boundary of $\Omega$ into three sets 
$ \partial^{\ast} \Omega, \partialint\Omega \text{ and } \partialext \Omega$ defined as follows.
For every $x \in \R^d,$
\begin{enumerate}[label = \arabic*.]
\item $ x \in \partial^{\ast} \Omega \quad  \text{ if } \quad \forall r > 0 , \lambda \big( \B(x,r) \cap \Omega\ \big) > 0 \text{ and } \lambda \big( \B(x,r) \cap \Omega^c \big) > 0.$
\item $x \in  \partialint \Omega \quad  \text{ if } \quad x\in \partial \Omega$ and $\exists r > 0, \lambda \big( \B(x,r) \cap \Omega^c  \big) = 0.$
\item $x \in  \partialext \Omega \quad  \text{ if } \quad x\in \partial \Omega$ and $\exists r > 0, \lambda \big( \B(x,r) \cap \Omega  \big) = 0.$
\end{enumerate}
We then define the set $\Omegatilde$ by adding to $\Omega$ the points of $\partialint \Omega$ and removing the points of $\partialext \Omega :$ 
$$\Omegatilde = \left( \Omega \cup \partialint \Omega \right) \setminus \left( \partialext \Omega \right) 
= \left( \Omega \cup \left( \partialint \Omega \cap \Omega^c \right) \right) \setminus \left( \partialext \Omega \cap \Omega \right).$$
After showing that the sets $\partialint \Omega \cap \Omega^c$ and $\partialext \Omega \cap \Omega$ have measure zero, we observe that the symmetric difference of $\Omega$ and $\Omegatilde,$ which is the union of these two sets, has measure zero. The fact that two sets differing by a null set have the same Lebesgue boundary ensures the equality 
$$\partial^{\ast} \Omega = \partial^{\ast} \Omegatilde.$$
To conclude, we show that the two sets $\partialint \Omegatilde$ and $\partialext \Omegatilde$ are empty, which yields the desired equality
$$\partial \Omegatilde = \partial^{\ast} \Omegatilde  =  \partial^{\ast} \Omega.$$ 

We now prove the previous assertions.
First, note the equality
\begin{equation}
\label{eq : preuve frontière de Lebesgue égalités}
\partialext \Omega = \partialint \left(\Omega^c\right)
\end{equation}
highlighting the symmetry between the sets $\Omega$ and $\Omega^c.$ 
The three sets $\partial^{\ast} \Omega, \partialint \Omega$ and  $\partialext \Omega$ are pairwise disjoint subsets of $\partial \Omega,$ and every point belonging to $\partial \Omega$ necessarily belongs to one of these sets, so they form a partition of the boundary of $\Omega.$

We now show that the set $\partialint \Omega \cap \Omega^c$ has measure zero. To this end, introduce the set 
$$ \A = \big\lbrace \B(x,r) : x \in \Q^d, r \in \Q^{\ast}_{+}, \, \lambda\big( \B(x,r) \cap \Omega^c \big) = 0 \big\rbrace.$$
This set being countable, we simply denote by $\B_i,$ with $i \in \N,$ the balls that compose it.
The inclusion 
$$ \partialint \Omega \cap \Omega^c \subset \bigcup_{i \in \N}\left( \B_i \cap \Omega^c \right)$$
ensures that the set $\partialint \Omega \cap \Omega^c$ is included in a countable union of sets of measure zero.
Indeed, each set in this union has measure zero by definition of $\A.$ This therefore ensures that $\partialint \Omega \cap \Omega^c$ has measure zero.  By replacing the set $\Omega$ with $\Omega^c$ and using equality \eqref{eq : preuve frontière de Lebesgue égalités},
we also deduce that the set $\partialext \Omega \cap \Omega$ has measure zero.

The equality 
$$ \Omega \triangle \widetilde\Omega = \left( \partialint \Omega \cap \Omega^c \right) \cup \left(\partialext \Omega \cap \Omega \right) $$
thus implies that the set $\Omega \triangle \Omegatilde$ has measure zero.
We deduce the equality
$$ \lambda \big( \B(x,r) \cap \Omega \big) = \lambda \big( \B(x,r) \cap \Omegatilde \big),$$ for all $x \in \R^d$ and $ r > 0.$
The same equality holds if we replace $\Omega$ with $\Omega^c,$ and we deduce that the sets $\Omega$ and $\Omegatilde$ have the same Lebesgue boundary 
$$ \partial^{\ast} \Omega = \partial^{\ast} \Omegatilde.$$
It remains to show the equality 
$$ \partial^{\ast} \Omegatilde = \partial \Omegatilde,$$
which is equivalent to the fact that the sets
$\partialint \Omegatilde$ and $\partialext \Omegatilde$ are empty. 
To show that the set $\partialint \Omegatilde$ is empty, we prove that every point $x \in \R^d$ satisfying the following property
 $$\exists r > 0,\, \lambda\big( \B(x,r) \cap \Omega^c \big) = 0$$
 necessarily belongs to the interior of $\widetilde \Omega.$
So let $x \in \R^d$ be such a point.
We now show that the ball $\B(x,r)$ is contained in $\Omegatilde.$
Let $y \in \B(x,r).$ We first observe that $y \in \overline{\Omega}.$
We now show that $y \in \Omegatilde.$ 
The point $y$ does not belong to the Lebesgue boundary of the set $\Omega,$ hence it necessarily belongs to one of the three disjoint sets 
$$ \ring \Omega, \quad \partialint \Omega \quad \text{ or } \quad \partialext \Omega.$$
However, recall that the set $\Omegatilde$ is defined by 
$$ \Omegatilde = \left( \Omega \cup \partialint \Omega \right) \setminus \partialext \Omega,$$
so it suffices to show that the point $y$ does not belong to $\partialext \Omega$ in order to conclude that it belongs to $\Omegatilde.$ This follows from the fact that for $\epsilon$ small enough, if $y$ belonged to $\partialext \Omega,$ we would have both 
$$ \lambda\big( \B(y,\epsilon) \cap \Omega \big) = 0 \quad \text{ and } \quad  \lambda\big( \B(y,\epsilon) \cap \Omega^c \big) = 0, $$ 
which is absurd.
Thus, the ball $\B(x,r)$ is contained in the set $\Omegatilde,$ and we deduce 
that the point $x$ belongs to the interior of $\Omegatilde,$ and that the set $\partialint \Omegatilde$ is empty.
Finally, we deduce from the equality
$$  \left(\Omegatilde\right)^c = \widetilde{\big(\Omega^c\big)}$$
and from the symmetry between the sets $\widetilde \Omega$ and $\Omegatilde^c$ (see \eqref{eq : preuve frontière de Lebesgue égalités}),
the equality
$$ \partialext \widetilde \Omega = \partialint \big(\Omegatilde^c\big)$$
which implies that the set $\partialext \Omegatilde$ is also empty.
\end{proof}
We now illustrate the advantage of considering the Lebesgue boundary rather than the usual boundary.

Let $\Omega \subset \R^2$ be the bounded domain shown in Figure \ref{fig:objet_1}, delimited by the black curve, where the red curve is added and the purple curve is removed. Since the boundary of $\Omega$ is the union of the black, red and purple curves, its upper box dimension is strictly greater than $1$ (and can be chosen arbitrarily close to $2$). However, the Lebesgue boundary of the set $\Omega$ is the black curve, illustrated in Figure \ref{fig:objet_2}, of dimension $1.$

\begin{figure*}[htbp]
\centering
\begin{subfigure}[t]{0.48\linewidth}
  \centering
  \includegraphics[width=\linewidth]{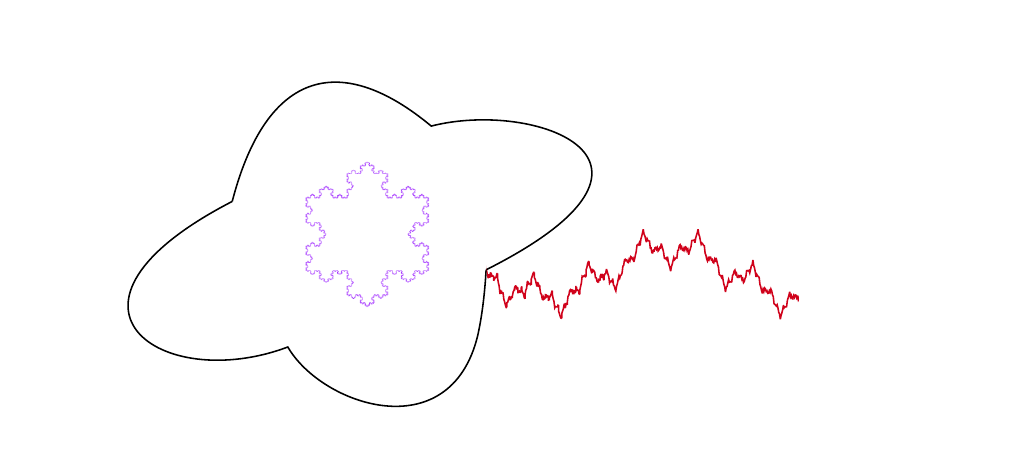}
  \caption{Boundary of $\Omega$.}
  \label{fig:objet_1}
\end{subfigure}\hfill
\begin{subfigure}[t]{0.48\linewidth}
  \centering
  \includegraphics[width=\linewidth]{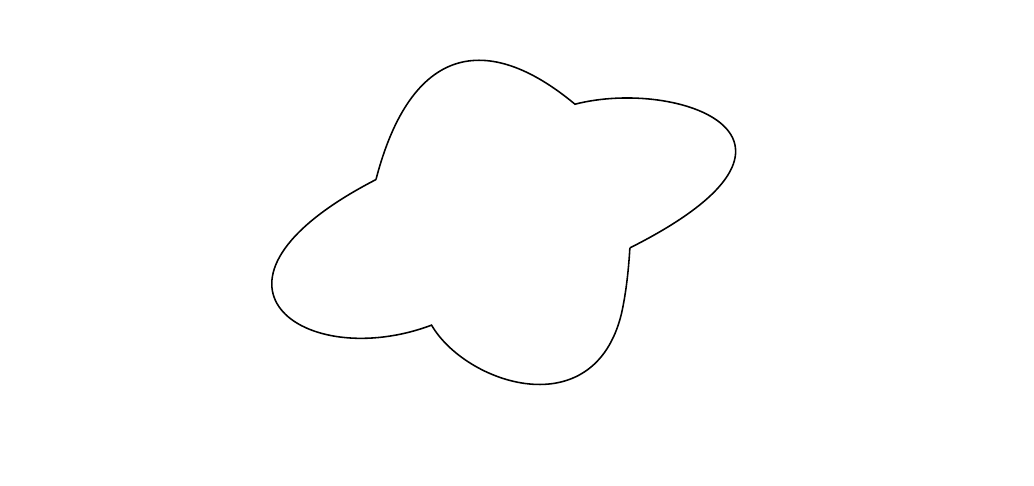}
  \caption{Lebesgue boundary of $\Omega$.}
  \label{fig:objet_2}
\end{subfigure}
\caption{Boundary and Lebesgue boundary of a fractal domain in dimension~2.}
\label{fig:objet_3}
\end{figure*}

\subsection{Upper box dimension and examples}
\label{section : graphe fonction hölder}
The purpose of this section is to illustrate the relevance of the criterion \eqref{eq : indicatrice appartient à B(gamma,1,1)}. We begin by comparing it with a condition involving the upper box dimension of the boundary. We then provide an example of a set showing that this criterion is more general.
 
For any subset $A$ of $\R^d,$ recall the notation $N_j(A)$ defined in $\eqref{eq : def de Nj(A)}.$
\begin{defi}[Upper box dimension]
\label{defi : dim boite sup}
For any nonempty bounded subset $A$ of $\R^d$, the \textit{upper box dimension} of $A$ is defined by
\[
\dimb(A) = \limsup_{j \to \infty} \; \frac{\log N_j(A)}{\log 2^j}.
\]
\end{defi}

Recall that the upper box dimension of a bounded set $A$ is a real number belonging to the interval $[0,d].$ When $A$ is a smooth submanifold of $\R^d$, it coincides with the usual notion of dimension. For instance, the upper box dimension of a non-degenerate rectangle in $\R^d$ is $d,$ while the upper box dimension of the sphere is $d-1.$ 

\begin{cor}
\label{cor : dim boîte indicatrice appartient à B(gamma,1,1)}
Let $\Omega \subset \R^d$ be a set with Lebesgue boundary $\partial^{\ast}\Omega$.  
Let $\gamma \in (0,1)$ and set $\beta = d - \gamma$. Then,
\[
\dimb(\partial^{\ast}\Omega) < \beta 
\implies 
\sum_{j=0}^{\infty} 2^{-\beta j} N_j(\partial^{\ast}\Omega) < \infty.
\]
\end{cor}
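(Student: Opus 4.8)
The statement is a purely analytic fact about the decay rate of the covering numbers $N_j(\partial^*\Omega)$: if $\dimb(\partial^*\Omega) < \beta$, then $\sum_j 2^{-\beta j} N_j(\partial^*\Omega) < \infty$. The natural approach is to exploit the definition of the upper box dimension to control $N_j$ by an exponential of a slightly smaller exponent, then compare against a convergent geometric series.

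First I would set $A = \partial^*\Omega$ and pick an intermediate real number $s$ with $\dimb(A) < s < \beta$; this is possible by hypothesis. By definition of the upper box dimension as a $\limsup$, the inequality $\frac{\log N_j(A)}{\log 2^j} < s$ holds for all but finitely many $j$, i.e. there exists $j_0 \in \N$ such that for all $j \ge j_0$ one has $N_j(A) \le 2^{sj}$ (one should be slightly careful if $A$ could be empty — then $N_j(A) = 0$ and the series is trivially convergent, so assume $A \ne \varnothing$ from now on, which is also needed for Definition~\ref{defi : dim boite sup} to apply). Then for $j \ge j_0$, the general term satisfies $2^{-\beta j} N_j(A) \le 2^{-\beta j} 2^{sj} = 2^{(s-\beta)j}$, and since $s - \beta < 0$ this is the general term of a convergent geometric series. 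The finitely many terms with $j < j_0$ contribute a finite sum, so the whole series converges.

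There is essentially no obstacle here; the only points requiring care are the boundary cases. One is the possibility $A = \varnothing$, already handled above. Another is making sure the passage from $\limsup \frac{\log N_j}{\log 2^j} < s$ to $N_j \le 2^{sj}$ for large $j$ is stated correctly: by definition of $\limsup$, for any $s > \dimb(A)$ there exists $j_0$ such that $\sup_{j \ge j_0} \frac{\log N_j(A)}{j\log 2} < s$, which gives $\log N_j(A) < s j \log 2$, i.e. $N_j(A) < 2^{sj}$, for every $j \ge j_0$. (Strictly, one should note $N_j(A) \ge 1$ so the logarithm is well defined.) Assembling these estimates gives
\[
\sum_{j=0}^{\infty} 2^{-\beta j} N_j(A)
= \sum_{j=0}^{j_0 - 1} 2^{-\beta j} N_j(A) + \sum_{j \ge j_0} 2^{-\beta j} N_j(A)
\le \sum_{j=0}^{j_0 - 1} 2^{-\beta j} N_j(A) + \sum_{j \ge j_0} 2^{(s-\beta)j} < \infty,
\]
which is the desired conclusion. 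The corollary is then immediate by combining this with the geometric criterion \eqref{eq : indicatrice appartient à B(gamma,1,1)} obtained from Proposition~\ref{prop : Espace de Besov indicatrice en dim d}.
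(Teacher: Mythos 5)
Your proof is correct and follows essentially the same route as the paper: choose an intermediate exponent $s$ (the paper calls it $\delta$) strictly between $\dimb(\partial^{\ast}\Omega)$ and $\beta$, use the definition of the upper box dimension to get $N_j(\partial^{\ast}\Omega) \le C\,2^{s j}$ for large $j$, and conclude by comparison with a convergent geometric series. The extra care you take with the cases $A = \varnothing$ and $N_j(A) \ge 1$ is reasonable but not substantively different from the paper's argument.
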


\begin{proof}
Let $\delta \in (\dimb(\partial^{\ast}\Omega), \beta)$.  
The condition $\dimb(\partial^{\ast}\Omega) < \delta$ implies the existence of a constant $C > 0$ such that, for all $j \in \N$,
\[
N_j(\partial^{\ast}\Omega) \le C\, 2^{\delta j}.
\]
We then deduce that the series $\sum_{j \in \N} 2^{-\beta j} N_j(\partial^{\ast}\Omega)$ converges geometrically.
\end{proof}
In particular, since the Lebesgue boundary is contained in the usual boundary, we recover the condition of Alberti, Stepanov and Trevisan in dimension $2$ stated in \citep{AlSTTRE24}.

We now show that criterion \eqref{eq : indicatrice appartient à B(gamma,1,1)} provides a more precise condition than the previous one supplied in Corollary \ref{cor : dim boîte indicatrice appartient à B(gamma,1,1)} which involves the upper box dimension. We restrict ourselves to the case of dimension $2.$  

Let $\beta \in (1,2).$ We will construct a set $\Omega \subset \R^2$ such that 
\begin{equation}
\label{eq : hyp graphe fonction}
 \dimb(\partial^{\ast} \Omega) = \beta \quad \text{ and } \quad \sum_{j=0}^{\infty} 2^{-\beta j} N_j(\partial^{\ast} \Omega) < \infty.
\end{equation}
The domain under consideration will be the epigraph of a Hölder function $g_{\beta}$ defined on $[0,1],$ which we will construct explicitly via its Schauder series. Let us begin with some reminders about this basis.

Let $\Lambda : [0,1] \rightarrow \R$ be the hat function defined by 
$$\ \Lambda(x)  = \begin{cases} x & \text{ if } x \in \left[0,\tfrac{1}{2}\right],
 \\ 1-x &\text{ if } x \in \left[\tfrac{1}{2},1\right]. \end{cases}$$
For all integers $j \ge 0 $ and $k \in \{0, \dots, 2^j -1\},$ we denote by $\Lambda_{j,k} : [0,1] \longrightarrow \R$ the function defined by 
 $$\ \Lambda_{j,k}(x)  = \begin{cases} \Lambda(2^jx-k) & \text{ if } x \in \left[\tfrac{k}{2^j},\tfrac{k+1}{2^j}\right],
 \\ 0 &\text{ otherwise}. \end{cases}$$
The Schauder basis of the space $\C([0,1], \R)$ is the family of functions
\[
\big\{\, 1,\, x,\, \Lambda_{j,k}(x) : j \ge 0,\; k \in \{0, \dots, 2^j - 1\} \,\big\}.
\]
Recall the following theorem.

\begin{theo}
\label{theo : Base de Schauder}
Let $f : [0,1] \longrightarrow \R$ be a continuous function.  
Then,
\[
f(x) = f(0) + (f(1) - f(0))\,x + \sum_{j=0}^{\infty} \sum_{k=0}^{2^j - 1} c_{j,k}\, \Lambda_{j,k}(x),
\]
where
\[
c_{j,k} = 2 f\!\left(\tfrac{k + 1/2}{2^j}\right)
- f\!\left(\tfrac{k + 1}{2^j}\right)
- f\!\left(\tfrac{k}{2^j}\right),
\]
and the series converges uniformly to $f$ on $[0,1]$.

Moreover, for any $\gamma \in (0,1)$, if there exists a constant $C > 0$ such that, for all $j \ge 0$ and $k \in \{0, \dots, 2^j - 1\}$, one has
\[
|c_{j,k}| \le C\, 2^{-\gamma j},
\]
then the function $f$ is $\gamma$-Hölder on $[0,1]$.
\end{theo}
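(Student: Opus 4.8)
The plan is to establish the classical characterization of the Faber--Schauder (Schauder) coefficients of a continuous function and then deduce the Hölder criterion from a telescoping/dyadic summation argument. I would organize the proof in three parts.

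\textbf{First, the reconstruction formula.} I would show that the partial sums
$$ S_J(x) = f(0) + (f(1)-f(0))\,x + \sum_{j=0}^{J-1}\sum_{k=0}^{2^j-1} c_{j,k}\,\Lambda_{j,k}(x) $$
coincide with the piecewise linear interpolation of $f$ at the dyadic points $\{\ell/2^J : 0 \le \ell \le 2^J\}$. This is proved by induction on $J$: the function $f(0)+(f(1)-f(0))x$ is the interpolation at level $0$, and passing from level $J$ to level $J+1$ amounts to correcting the interpolant at each new midpoint $(k+1/2)/2^J$. The correction needed at that midpoint is exactly $f\big((k+1/2)/2^J\big)$ minus the average $\tfrac12\big(f(k/2^J)+f((k+1)/2^J)\big)$, i.e. $\tfrac12 c_{j,k}$, and since $\Lambda_{j,k}$ takes the value $\tfrac12$ at its midpoint, vanishes at the surrounding dyadic points, and is supported in $[k/2^j,(k+1)/2^j]$, adding $c_{j,k}\Lambda_{j,k}$ realizes precisely this correction without disturbing the values at coarser dyadic points. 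Uniform convergence $S_J \to f$ then follows from uniform continuity of $f$ on the compact $[0,1]$: on each dyadic interval of length $2^{-J}$ the interpolant stays within the oscillation of $f$ on that interval, which tends to $0$ uniformly as $J \to \infty$. The formula for $c_{j,k}$ is read off from the computation just described.

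\textbf{Second, the Hölder estimate.} Assume $|c_{j,k}| \le C\,2^{-\gamma j}$ for all $j,k$. I would bound $|f(x)-f(y)|$ for $x \neq y$ in $[0,1]$ by splitting the series at the scale $J$ determined by $2^{-J-1} < |x-y| \le 2^{-J}$. For the low-frequency part $j < J$, I use that $\Lambda_{j,k}$ is Lipschitz with $\|\Lambda_{j,k}\|_{\Lip} = 2^j$ and that at each level $j$ at most two of the functions $\Lambda_{j,k}$ are nonzero on the segment between $x$ and $y$ (their supports being disjoint dyadic intervals), so
$$ \Big| \sum_{j=0}^{J-1}\sum_{k} c_{j,k}\big(\Lambda_{j,k}(x)-\Lambda_{j,k}(y)\big)\Big| \le \sum_{j=0}^{J-1} 2\,C\,2^{-\gamma j}\,2^{j}\,|x-y| \lesssim C\,2^{(1-\gamma)J}|x-y| \lesssim C\,|x-y|^{\gamma}, $$
using $2^{-J} \asymp |x-y|$ and $\gamma < 1$. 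Together with the affine term, which is Lipschitz hence $\gamma$-Hölder on the bounded interval, this handles $j < J$. For the high-frequency part $j \ge J$, I use the crude bound $\|\Lambda_{j,k}\|_\infty = 2^{-j-1}$ together with the fact that for fixed $j$ and any point at most one $k$ contributes, giving
$$ \Big| \sum_{j \ge J}\sum_k c_{j,k}\big(\Lambda_{j,k}(x)-\Lambda_{j,k}(y)\big)\Big| \le \sum_{j \ge J} 2\,C\,2^{-\gamma j}\,2^{-j} \le \sum_{j \ge J} 2\,C\,2^{-\gamma j} \lesssim C\,2^{-\gamma J} \lesssim C\,|x-y|^{\gamma}. $$
Summing the two contributions yields $|f(x)-f(y)| \le C'\,|x-y|^\gamma$ with $C'$ depending only on $C$ and $\gamma$.

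\textbf{Main obstacle.} The genuinely delicate point is the first part: proving cleanly that the Schauder partial sums are the dyadic interpolants and that the formula for $c_{j,k}$ comes out right, since one must keep careful track of which $\Lambda_{j,k}$ affect which dyadic nodes and verify the induction step does not perturb coarser values. Once that structural fact is in hand, the Hölder estimate is a routine two-scale (low/high frequency) splitting of exactly the type used for wavelet characterizations of $\C^\gamma$, the only inputs being the support disjointness at each level, the sup-norm $2^{-j-1}$, and the Lipschitz constant $2^j$ of the hat functions. I would also remark in passing that uniform convergence is what legitimizes manipulating the series termwise in the estimate, via the interpolation description.
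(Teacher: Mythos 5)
The paper states this as a classical recalled result and gives no proof of its own, so there is nothing to compare against; your argument is the standard one (dyadic interpolation for the reconstruction, low/high frequency splitting at the scale of $|x-y|$ for the Hölder bound) and it is correct. One small inconsistency: with the paper's normalization $\Lambda_{j,k}(x)=\Lambda(2^jx-k)$ one has $\|\Lambda_{j,k}\|_\infty=\tfrac12$, not $2^{-j-1}$ (you yourself use the value $\tfrac12$ at the midpoint in the first part); this does not affect your conclusion, since in the high-frequency sum you immediately discard the factor $2^{-j}\le 1$ and the surviving bound $\sum_{j\ge J}2C\,2^{-\gamma j}\lesssim C\,|x-y|^{\gamma}$ remains valid with the correct sup-norm.
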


We now introduce some notation.  
Let $f : [0,1] \longrightarrow \R$ be a continuous function, and let $j \ge 0$ be an integer.  
Denote by $N_j(f)$ the number of dyadic squares of generation $j$ required to cover the graph of $f$. Given a dyadic interval $\lambda$ of generation $j$, i.e.
\[
\lambda = \left[ \tfrac{k}{2^j}, \tfrac{k+1}{2^j} \right),
\qquad k \in \{0, \dots, 2^j - 1\},
\]
let $N_j^{\lambda}(f)$ denote the number of dyadic squares needed to cover the graph of $f$ restricted to $\lambda$, and let $\Osc(f, \lambda)$ be the oscillation of $f$ on $\lambda$, defined by
\[
\Osc(f, \lambda)
= \sup_{x \in \lambda} f(x)
- \inf_{x \in \lambda} f(x).
\]
We clearly have the inequality
\begin{equation}
\label{eq : encadrement de Nj par osc}
2^j \Osc(f,\lambda)
\le N_j^{\lambda}(f)
\le 2^j \Osc(f,\lambda) + 2.
\end{equation}

Let $\gamma \in (0,1)$ and $\delta \in \R$.  
Consider the function $f_{\gamma, \delta} : [0,1] \longrightarrow \R$ defined by
\[
f_{\gamma, \delta}(x)
= \sum_{j=1}^{\infty} \sum_{k=0}^{2^j - 1} 2^{-\gamma j}\, j^{\delta} \Lambda_{j,k}(x).
\]
The function $g_{\beta}$ that we aim to construct will be of this type.  
The function $f_{\gamma, \delta}$ is a continuous and positive function whose Schauder coefficients are given, for all $j \ge 1$ and $k \in \{0, \dots, 2^j - 1\}$, by
\begin{equation}
\label{eq : exemple fonction 1}
2^{-\gamma j} j^{\delta}
= 2 f_{\gamma, \delta}\!\left(\tfrac{k + 1/2}{2^j}\right)
- f_{\gamma, \delta}\!\left(\tfrac{k + 1}{2^j}\right)
- f_{\gamma, \delta}\!\left(\tfrac{k}{2^j}\right).
\end{equation}
Let $j \ge 1$, and let $\lambda$ be a dyadic interval of generation $j$.  
From the previous equality for the Schauder coefficients of $f_{\gamma, \delta}$, we deduce the lower bound
\begin{equation}
\label{eq : exemple fonction 2}
2^{-\gamma j} j^{\delta} \le 2\, \Osc(f_{\gamma, \delta}, \lambda).
\end{equation}
Inserting this inequality into \eqref{eq : encadrement de Nj par osc}, we obtain
\begin{equation}
\label{eq : exemple fonction 3}
2^{(1 - \gamma)j - 1} j^{\delta}
\le N_j^{\lambda}(f_{\gamma, \delta})
\le 2^j \Osc(f_{\gamma, \delta}, \lambda) + 2.
\end{equation}
We now estimate $\Osc(f_{\gamma, \delta}, \lambda)$.  
In what follows, $C$ denotes a strictly positive constant depending only on $\gamma$ and $\delta$, whose value may change from line to line.  
For $x, y \in \lambda$, we have
\begin{align*}
|f(x) - f(y)|
&\le \sum_{j' < j} \sum_{k'=0}^{2^{j'} - 1} 2^{-\gamma j'} {j'}^{\delta}
  \big| \Lambda_{j',k'}(x) - \Lambda_{j',k'}(y) \big| \\
&\quad + \sum_{j' \ge j} \sum_{k'=0}^{2^{j'} - 1} 2^{-\gamma j'} {j'}^{\delta}
  \big( \Lambda_{j',k'}(x) + \Lambda_{j',k'}(y) \big).
\end{align*}
We now bound each of the two terms on the right-hand side of this inequality.

We begin with the first term.  
Let $1 \le j' < j$.  
The interval $\lambda$ is contained in a unique dyadic interval of generation $j'$, so there exists a unique $k \in \{0, \dots, 2^{j'} - 1\}$ such that the quantity $|\Lambda_{j',k}(x) - \Lambda_{j',k}(y)|$ is nonzero.  
The inequality
\[
|\Lambda_{j',k}(x) - \Lambda_{j',k}(y)| \le 2^{j'} |x - y|
\]
therefore yields
\begin{align*}
\sum_{j' < j} \sum_{k' = 0}^{2^{j'} - 1} 2^{-\gamma j'} {j'}^{\delta}
  |\Lambda_{j',k'}(x) - \Lambda_{j',k'}(y)|
&= \sum_{j' < j} 2^{-\gamma j'} {j'}^{\delta}
  |\Lambda_{j',k}(x) - \Lambda_{j',k}(y)| \\
&\le \sum_{j' < j} 2^{(1 - \gamma) j'} {j'}^{\delta} |x - y| \\
&\le C\, 2^{(1 - \gamma) j} j^{\delta} |x - y| \\
&\le C\, 2^{-\gamma j} j^{\delta},
\end{align*}
where the last inequality follows from the fact that $|x - y| \le 2^{-j}$.

We now bound the second term.  
For every $j' \ge j$, there exists a unique dyadic interval of generation $j'$ containing $x$.  
Since the function $\Lambda$ is bounded by $\tfrac{1}{2}$, we obtain
\[
\sum_{j' \ge j} \sum_{k' = 0}^{2^{j'} - 1} 2^{-\gamma j'} {j'}^{\delta}
  \Lambda_{j',k'}(x)
\le \frac{1}{2} \sum_{j' \ge j} 2^{-\gamma j'} {j'}^{\delta},
\]
and the same bound holds for $y$.  
We deduce
\begin{align*}
\sum_{j' \ge j} \sum_{k' = 0}^{2^{j'} - 1} 2^{-\gamma j'} {j'}^{\delta}
  \big( \Lambda_{j',k'}(x) + \Lambda_{j',k'}(y) \big)
&\le \sum_{j' \ge j} 2^{-\gamma j'} {j'}^{\delta} \\
&\le C\, 2^{-\gamma j} j^{\delta}.
\end{align*}
From these two bounds, we obtain the desired inequality
\[
\Osc(f_{\gamma, \delta}, \lambda)
\le C\, 2^{-\gamma j} j^{\delta}.
\]
Inserting this inequality into \eqref{eq : exemple fonction 3}, and using the fact that $1 - \gamma$ is strictly positive, we obtain
\[
2^{(1 - \gamma) j - 1} j^{\delta}
\le N_j^{\lambda}(f_{\gamma, \delta})
\le C\, 2^{(1 - \gamma) j} j^{\delta}.
\]
From the equality
\[
N_j(f_{\gamma, \delta})
= \sum_{k = 0}^{2^j - 1}
  N_j^{\lambda_{j,k}}(f_{\gamma, \delta}),
\qquad
\lambda_{j,k}
= \left[\tfrac{k}{2^j}, \tfrac{k + 1}{2^j}\right),
\]
we deduce
\begin{equation}
\label{eq : exemple fonction encadrement Njlambda}
2^{(2 - \gamma) j - 1} j^{\delta}
\le N_j(f_{\gamma, \delta})
\le C\, 2^{(2 - \gamma) j} j^{\delta}.
\end{equation}

We now set $g_{\beta} = f_{2 - \beta, -2}$.  
We have shown the existence of constants $C_1, C_2 > 0$ such that, for every $j \in \N^{\ast}$,
\[
\frac{C_1}{j^2}\, 2^{\beta j}
\le N_j(g_{\beta})
\le \frac{C_2}{j^2}\, 2^{\beta j}.
\]
We deduce that
\[
\sum_{j = 1}^{\infty} 2^{-\beta j} N_j(g_{\beta}) < \infty,
\]
and that the upper box dimension of the graph of the function $g_{\beta}$ is equal to $\beta$.  
Finally, we note that the function $g_{\beta}$ is $(2 - \beta)$-Hölder by Theorem~\ref{theo : Base de Schauder}, and it is easily checked that the Lebesgue boundary of the epigraph of a Hölder function coincides with the graph of such a function.  
We therefore deduce that the set
\[
\Omega = \big\{ (x, y) : x \in [0,1],\ 0 \le y \le g_{\beta}(x) \big\}
\]
satisfies the desired properties~\eqref{eq : hyp graphe fonction}.

\begin{figure*}[htbp]
\centering
\includegraphics[scale=0.7]{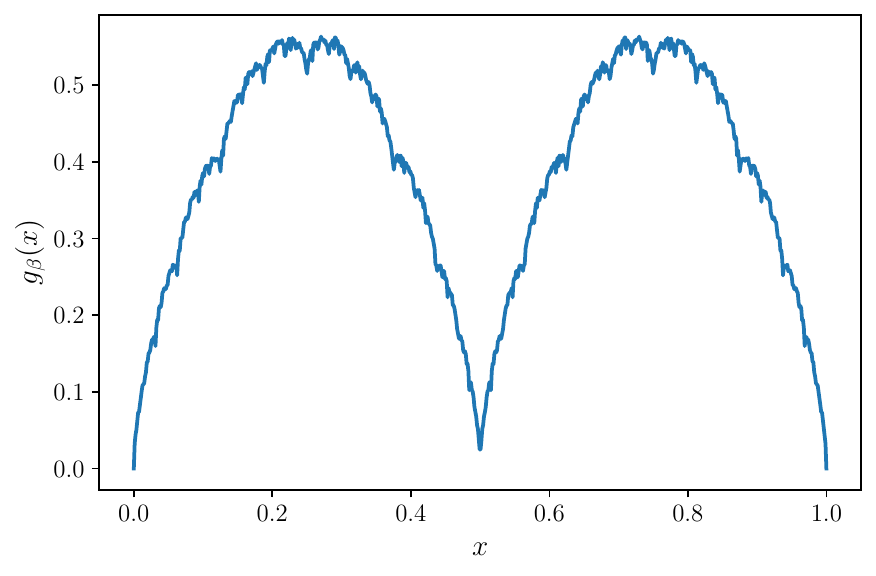}
\caption{Graph of the function $g_{\beta}$ for $\beta = 1.99$.}
\label{fig:graphe holder}
\end{figure*}

\section{Towards more general assumptions}
\label{section : Towards more general assumptions}
Let $\alpha, \beta_1, \dots, \beta_d$ be real numbers satisfying \eqref{eq : alpha + beta > d}, and let $\beta, \gamma$ be the real numbers defined in \eqref{eq : def de beta et gamma}.
The purpose of this section is to generalize Theorem $\ref{theo : distribution globale f dg1 ^....^ dgd}$ by considering weaker Hölder regularity assumptions. 

We first focus on the case where the functions are still globally Hölder continuous, and we emphasize the fact that the function $f$ and the functions $g^1, \dots, g^d$ play different roles in the construction of the distribution $\fdgd.$ More precisely, we show that it suffices to assume that the functions $g^1, \dots, g^d$ belong to the homogeneous Hölder spaces $\dot\C^{\beta_1}(\R^d), \dots, \dot\C^{\beta_d}(\R^d),$ defined at the beginning of Section \ref{section : espace de hölder homogène}, in order for the distribution $\fdgd$ to be well-defined in the space of globally Hölder distributions $\C^{-\gamma}(\R^d).$
Since the continuity of $\M$ in Theorem~\ref{theo : distribution globale f dg1 ^....^ dgd} 
with respect to the functions $g^1, \dots, g^d$ is expressed exclusively in terms of their 
Hölder seminorms, we show that the mapping constructed on the space 
\[
\Calpha(\R^d) 
\times \dot\C^{\beta_1}(\R^d) 
\times \dots 
\times \dot\C^{\beta_d}(\R^d)
\]
is also continuous. Moreover, it is still possible to define, by 
\eqref{eq : Int_Omega fdgd = < fdgd, 1_Omega> }, the integral 
\[
\int_{\Omega} \fdgd,
\]
for any Borel set $\Omega$ such that 
$\mathbb{1}_{\Omega} \in \B^{\gamma}_{1,1}(\R^d).$

Next, we consider the case where the functions $f, g^1, \dots, g^d$ are only assumed to be locally Hölder continuous on $\R^d:$ 
$$f\in \Calpha_{\loc}(\R^d), g^1 \in \C^{\beta_1}_{\loc}(\R^d), \dots, g^d \in \C^{\beta_d}_{\loc}(\R^d).$$
We refer to Section \ref{section : Construction et étude de la distribution : cas localement höldérien} for a precise definition of these spaces.
Under these assumptions, it is still possible to define the distribution $\fdgd,$ but this distribution now belongs to a space of locally Hölder distributions (see the characterization \eqref{eq : distribution localement hölder}). 
We then prove Theorem \ref{theo : distribution locale f dg1 ^....^ dgd}, which is a local version of Theorem \ref{theo : distribution globale f dg1 ^....^ dgd}. Finally, we use a standard localization argument with cut-off function to define the integral 
$\int_{\Omega} \fdgd,$
by \eqref{eq : Int_Omega fdgd = < fdgd, 1_Omega> cas local}, 
for any bounded Borel set $\Omega \in \B(\R^d)$ satisfying condition \eqref{eq : condition besov pour l'intégral}.
\subsection{Globally Hölder case} 
\label{section : espace de hölder homogène}
We begin by recalling some definitions and notations regarding homogeneous Hölder spaces.
Let 
$$ \Halpha(\R^d) = \lbrace f: \R^d\longrightarrow \R : \|f\|_{\alpha} < \infty \rbrace$$
denote the space of real $\alpha$-Hölder functions defined on $\R^d.$
Let 
$$ \Calphap(\R^d) = \Halpha(\R^d) / \R$$
denote the quotient space of $\Halpha(\R^d)$ by the subspace of constant functions. With a slight abuse of terminology, we shall still refer to the elements of $\Calphap(\R^d)$ as functions.
For any function $\dot f\in \Calphap(\R^d),$ we may define  
$$ \|\dot f\|_{\Calphap} = \|f\|_{\alpha},$$
where $f\in \Halpha(\R^d)$ is any representative of $\dot f.$  
We recall that the space $\left(\Calphap(\R^d), \|\cdot\|_{\Calphap} \right)$ is a Banach space. We say that a function $\dot f$ is Lipschitz if it admits a Lipschitz representative. Since the partial derivatives of such a function are always well-defined, we can freely refer to the Jacobian of Lipschitz functions $g^1 \in \dot\C^{\beta_1}(\R^d) , \dots, g^d \in \C^{\beta_d}(\R^d).$

We now have all the tools to derive a more general version of Theorem $\ref{theo : distribution globale f dg1 ^....^ dgd}$ involving homogeneous Hölder spaces.
In the remainder of this section, we fix functions  
 $$ f\in \Calpha(\R^d),  g^1 \in \H^{\beta_1}(\R^d), \dots,\,  g^d \in \H^{\beta_d}(\R^d).$$
For any rectangle $R \subset \R^d,$ Theorem $\ref{theo : intégrale de Zust}$ allows us to define the Züst integral: 
$$\int_{R} \fdgd.$$
Except for the fact that a Lipschitz function on $\R^d$ is not necessarily Hölder on $\R^d,$ everything works exactly as in the case where the functions $g^1, \dots, g^d$ belong to the inhomogeneous Hölder spaces $\C^{\beta_1}(\R^d), \dots, \C^{\beta_d}(\R^d).$ We can therefore still define the distribution $\fdgd$ by \eqref{eq : def de la distribution fdg1^...^dgd}, and, by the same argument as in Sections \ref{section : Notations globales} and \ref{section : régularité de la distribution globale}, construct the $(d+1)$-linear map  
\begin{equation}
\label{eq : def de l'appli M sur holder homogène}
\begin{array}{rrcl}
\M :  &\Calpha(\R^d) \times  \H^{\beta_1}(\R^d) \times \dots \times \H^{\beta_d}(\R^d) & \longrightarrow & \C^{-\gamma}(\R^d)  \\
& (f, g^1, \dots, g^d) & \longmapsto &  \fdgd
\end{array}
\end{equation}
satisfying the following properties:
\begin{enumerate}[label=\arabic*.]
\item Let $g^1 \in \H^{\beta_1}(\R^d) , \dots, g^d \in \H^{\beta_d}(\R^d).$ If the functions $g^1, \dots, g^d$ are Lipschitz, then 
$$\fdgd = f\det(\dg).$$
\item Let $\alpha' \le \alpha$ and $\beta_i'\le \beta_i,$ $i \in \{1,\dots,d\},$ satisfying \eqref{eq : alpha + beta > d}. Set $\gamma' = d-\sum_{i=1}^d \beta_i'.$ 
Then there exists a constant $C >0$ such that, for all functions $f \in \C^{\alpha}(\R^d),$ $ g^i \in \H^{\beta_i}(\R^d),$ we have 
$$\|\fdgd\|_{\C^{-\gamma'}} \le C\, \|f\|_{\C^{\alpha'}} \prod_{i=1}^d\|g^i\|_{\beta_i'}.$$
\end{enumerate}
Note that the restriction of this application to the space 
$\Calpha(\R^d) \times  \C^{\beta_1}(\R^d) \times \dots \times \C^{\beta_d}(\R^d)$ coincides with the application defined in Theorem \ref{theo : distribution globale f dg1 ^....^ dgd}. We have therefore chosen to give them the same name. 
The uniqueness of this application is proved in the same way as in Theorem \ref{theo : distribution globale f dg1 ^....^ dgd} (see the end of Section \ref{section : Unicité cas global}), using a variant of the approximation lemma \ref{lem : lemme d'approximation global}.
\begin{lemme}[Global approximation lemma $2$]
\label{lem : lemme d'approximation global 2}
Let $\alpha, \alpha' \in (0,1) $ with $\alpha' < \alpha.$ For any function $f\in \H^{\alpha}(\R^d),$ there exists a sequence of real functions $(f_n)_{n\in\N}$ defined on $\R^d$ satisfying the following properties.
\begin{enumerate}[label=\arabic*.]
\item For every integer $n\in \N^{\ast},$ the function $f_n$ is of class $\C^{\infty}$ on $\R^d$ and we have
$$ \|f_n\|_{\alpha} \le \|f\|_{\alpha}.$$
\item The sequence $(f_n)_{n\in\N}$ converges uniformly to the function $f$ and
$$\underset{n \to \infty}{\lim} \|f-f_n\|_{\alpha'}  = 0.$$
\end{enumerate}
\end{lemme}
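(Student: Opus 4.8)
The plan is to construct the sequence $(f_n)$ by mollification, exactly as in the proof of Lemma \ref{lem : lemme d'approximation global}, the only new point being that a function in $\H^{\alpha}(\R^d)$ need not be bounded. Fix once and for all a mollifier $\rho \in \D(\R^d)$ with $\rho \ge 0$, $\int_{\R^d} \rho = 1$ and $\supp \rho \subset B(0,1)$, and set $\rho_n(x) = n^d \rho(nx)$ for $n \in \N^{\ast}$, so that $\int_{\R^d}\rho_n = 1$ and $\supp \rho_n \subset B(0, 1/n)$. Any $f \in \H^{\alpha}(\R^d)$ satisfies $|f(x)| \le |f(0)| + \|f\|_{\alpha}\,|x|^{\alpha}$, hence is locally integrable, so the convolution $f_n := f * \rho_n$ is well defined and of class $\C^{\infty}$ on $\R^d$ by differentiation under the integral sign. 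Using moreover that $\int_{\R^d}\partial^{k}\rho_n = 0$ for every multi-index $k$ with $|k| \ge 1$, together with the Hölder bound $|f(x-z)-f(x)| \le \|f\|_{\alpha}|z|^{\alpha}$, one checks that each derivative $\partial^{k} f_n$ with $|k| \ge 1$ is bounded on $\R^d$; in particular each $f_n$ is globally Lipschitz, which is the regularity needed when this lemma is applied to the functions $g^1, \dots, g^d$.

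First I would check the seminorm estimate. For $x, y \in \R^d$,
$$
f_n(x) - f_n(y) = \int_{\R^d} \rho_n(z)\,\big(f(x - z) - f(y - z)\big)\, \d z,
$$
so $|f_n(x) - f_n(y)| \le \|f\|_{\alpha}\, |x - y|^{\alpha} \int_{\R^d} \rho_n(z)\, \d z = \|f\|_{\alpha}\, |x - y|^{\alpha}$, which gives $\|f_n\|_{\alpha} \le \|f\|_{\alpha}$. Next, since $\supp \rho_n \subset B(0,1/n)$,
$$
|f_n(x) - f(x)| = \left| \int_{\R^d} \rho_n(z)\,\big(f(x - z) - f(x)\big)\, \d z \right| \le \|f\|_{\alpha} \int_{\R^d} \rho_n(z)\, |z|^{\alpha}\, \d z \le \|f\|_{\alpha}\, n^{-\alpha},
$$
uniformly in $x$, so the sequence $(f_n)$ converges uniformly to $f$.

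It then remains to deduce that $\|f - f_n\|_{\alpha'} \to 0$. Write $g_n = f - f_n$, so that $\|g_n\|_{\alpha} \le 2 \|f\|_{\alpha}$ and $\|g_n\|_{\infty} \to 0$. For $\delta > 0$ and $x \neq y$ with $|x - y| \le \delta$, one has $\tfrac{|g_n(x) - g_n(y)|}{|x-y|^{\alpha'}} \le \|g_n\|_{\alpha}\, |x - y|^{\alpha - \alpha'} \le 2\|f\|_{\alpha}\, \delta^{\alpha - \alpha'}$, while for $|x - y| > \delta$ one has $\tfrac{|g_n(x) - g_n(y)|}{|x-y|^{\alpha'}} \le 2\|g_n\|_{\infty}\, \delta^{-\alpha'}$; hence
$$
\|g_n\|_{\alpha'} \le \max\!\big( 2\|f\|_{\alpha}\, \delta^{\alpha - \alpha'},\; 2\|g_n\|_{\infty}\, \delta^{-\alpha'} \big).
$$
Given $\epsilon > 0$, I would first choose $\delta$ small enough that $2\|f\|_{\alpha}\, \delta^{\alpha - \alpha'} < \epsilon$ (possible because $\alpha > \alpha'$), and then $n$ large enough that $2\|g_n\|_{\infty}\, \delta^{-\alpha'} < \epsilon$; this yields $\|f - f_n\|_{\alpha'} \le \epsilon$ for $n$ large, proving the second property.

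I do not expect any serious obstacle here: the argument is the standard mollification estimate followed by an interpolation between the uniform bound and the $\alpha$-seminorm bound. The only subtlety, and the reason a statement separate from Lemma \ref{lem : lemme d'approximation global} is needed, is that elements of the homogeneous space $\H^{\alpha}(\R^d)$ may be unbounded, so one must justify that $f * \rho_n$ makes sense (via the polynomial growth bound) and accept that the approximants $f_n$ are no longer bounded in general — only of class $\C^{\infty}$ with all derivatives of positive order bounded, which already suffices to make them Lipschitz.
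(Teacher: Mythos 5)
Your proof is correct: the mollification construction gives the seminorm bound $\|f_n\|_{\alpha}\le\|f\|_{\alpha}$ and uniform convergence at rate $n^{-\alpha}$, and the interpolation between $\|\cdot\|_{\infty}$ and $\|\cdot\|_{\alpha}$ correctly yields $\|f-f_n\|_{\alpha'}\to 0$; the paper in fact states this lemma without proof, and your argument is the standard one the author presumably intends. Your remark that the approximants are unbounded in general but globally Lipschitz (all derivatives of positive order bounded, via $\int\partial^{k}\rho_n=0$) is exactly the point that matters for the application to the functions $g^1,\dots,g^d$ in the homogeneous setting.
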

Finally, let us note that the properties stated in Proposition \ref{prop : prop de l'appli M global} are also satisfied by the application $\M.$  
This shows that this application is compatible with the quotient, and we thus have proved the desired theorem.

\begin{theo}
\label{theo : distribution globale f dg1 ^....^ dgd homogène}
There exists a unique $(d+1)$-linear map 
$$\begin{array}{rrcl}
\M :  &\Calpha(\R^d) \times  \dot\C^{\beta_1}(\R^d) \times \dots \times \dot \C^{\beta_d}(\R^d) & \longrightarrow & \C^{-\gamma}(\R^d)  \\
& (f, g^1, \dots, g^d) & \longmapsto &  \fdgd
\end{array}$$
satisfying the following properties:
\begin{enumerate}[label=\arabic*.]
\item Let $g^1 \in \dot\C^{\beta_1}(\R^d) , \dots, g^d \in \dot\C^{\beta_d}(\R^d).$  
If the functions $g^1, \dots, g^d$ are Lipschitz, then
$$ \fdgd = f\det(\dg).$$

\item Let $\alpha' \le \alpha$ and $\beta_i'\le \beta_i,$ $i \in \{1,\dots,d\},$ satisfying \eqref{eq : alpha + beta > d}.  
Set $\gamma' = d-\sum_{i=1}^d \beta_i'.$  
Then, there exists a constant $C >0$ such that, for all functions $f \in \C^{\alpha}(\R^d)$ and $ g^i \in \dot \C^{\beta_i}(\R^d),$ we have 
$$\|\fdgd\|_{\C^{-\gamma'}} \le C\, \|f\|_{\C^{\alpha'}}\prod_{i=1}^d\|g^i\|_{\dot\C^{\beta_i'}}.$$
\end{enumerate}
\end{theo}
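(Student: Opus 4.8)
The plan is to deduce the theorem from the $(d+1)$-linear map $\M$ of \eqref{eq : def de l'appli M sur holder homogène}, which is already constructed on $\Calpha(\R^d) \times \H^{\beta_1}(\R^d) \times \dots \times \H^{\beta_d}(\R^d)$ together with its Lipschitz identity and its continuity estimate, by factoring it through the quotient by constants in each of its last $d$ arguments. The work is then to check that this factorization is legitimate, that the induced map on $\Calpha(\R^d) \times \dot\C^{\beta_1}(\R^d) \times \dots \times \dot\C^{\beta_d}(\R^d)$ still satisfies properties~1 and~2, and that it is the unique such map.

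For the descent, I would first record that $\M$ enjoys, on the non-quotiented spaces, the two properties of Proposition~\ref{prop : prop de l'appli M global}: antisymmetry in the $g^i$, and $\fdgd = 0$ as soon as one of the $g^i$ is constant. Both are obtained exactly as at the end of Section~\ref{section : Unicité cas global}: using Lemma~\ref{lem : lemme d'approximation global 2} one approximates each $g^i$ by $\C^\infty$ functions $g^i_n$ — which are in particular Lipschitz, since the seminorm control furnished by the lemma forces the gradient to be bounded — with $\|g^i_n\|_{\beta_i} \le \|g^i\|_{\beta_i}$ and $g^i_n \to g^i$ in $\H^{\beta_i'}$; one then invokes the smooth identity $\M(f,g^1_n,\dots,g^d_n) = f\det(\dg_n)$ and elementary properties of the determinant, and passes to the limit by means of the continuity estimate of \eqref{eq : def de l'appli M sur holder homogène}, which involves the $g^i$ only through their Hölder seminorms. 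Combined with $(d+1)$-linearity, the vanishing property yields $\M(f,\dots,g^i+c,\dots) = \M(f,\dots,g^i,\dots)$ for every constant $c$, so $\M$ descends to a well-defined $(d+1)$-linear map on the quotient. Property~1 survives because a Lipschitz class has a Lipschitz representative and $\det(\dg)$ — a bounded function by Rademacher's theorem, hence a distribution — is insensitive to adding constants to the $g^i$; property~2 survives because its right-hand side depends on the $g^i$ only through their Hölder seminorms, which are precisely the quotient norms $\|g^i\|_{\dot\C^{\beta_i'}}$.

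Uniqueness follows the same pattern as at the end of Section~\ref{section : Unicité cas global}. Given another $(d+1)$-linear map $\B$ on $\Calpha(\R^d) \times \dot\C^{\beta_1}(\R^d) \times \dots \times \dot\C^{\beta_d}(\R^d)$ satisfying~1 and~2, I would fix $f$ and classes $g^1,\dots,g^d$, choose representatives, select $\alpha' < \alpha$ and $\beta_i' < \beta_i$ still obeying \eqref{eq : alpha + beta > d}, and approximate each $g^i$ by Lipschitz $\C^\infty$ functions $g^i_n$ via Lemma~\ref{lem : lemme d'approximation global 2}. Property~1 forces $\M(f,g^1_n,\dots,g^d_n) = f\det(\dg_n) = \B(f,g^1_n,\dots,g^d_n)$, and since $g^i_n \to g^i$ in $\dot\C^{\beta_i'}$, property~2 together with $(d+1)$-linearity shows that $\M(f,g^1,\dots,g^d)$ and $\B(f,g^1,\dots,g^d)$ are both the $\C^{-\gamma'}$-limit of $(f\det(\dg_n))_n$, hence they coincide.

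The only genuinely delicate point — which is bookkeeping rather than a deep difficulty — is the descent step: one must make sure that $\M$ really factors through the quotient, which hinges entirely on the vanishing-when-constant assertion of Proposition~\ref{prop : prop de l'appli M global}, and that the approximants of Lemma~\ref{lem : lemme d'approximation global 2} are simultaneously Lipschitz (so that the Lipschitz identity applies to them) and convergent in the weaker seminorm $\H^{\beta_i'}$ (so that the continuity estimate can be used to pass to the limit). Once these are in place, the rest is a transcription of the arguments already carried out for Theorem~\ref{theo : distribution globale f dg1 ^....^ dgd}.
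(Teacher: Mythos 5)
Your proposal is correct and follows essentially the same route as the paper: the map is first built on $\Calpha(\R^d)\times\H^{\beta_1}(\R^d)\times\dots\times\H^{\beta_d}(\R^d)$ as in \eqref{eq : def de l'appli M sur holder homogène}, shown to vanish when some $g^i$ is constant (hence to descend to the quotient $\dot\C^{\beta_i}$), and uniqueness is obtained by the approximation argument of Section~\ref{section : Unicité cas global} using Lemma~\ref{lem : lemme d'approximation global 2}. The only slip is your parenthetical justification that the approximants are Lipschitz ``since the seminorm control forces the gradient to be bounded'': a bound on the $\beta_i$-Hölder seminorm with $\beta_i<1$ does \emph{not} bound the gradient; the Lipschitz property of the $g^i_n$ comes instead from the construction in the approximation lemma (mollification at a fixed scale yields $\|\nabla g^i_n\|_\infty \lesssim n^{1-\beta_i}\|g^i\|_{\beta_i}$), which is what the paper implicitly relies on as well.
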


\subsection{Locally Hölder case}
\label{section : Construction et étude de la distribution : cas localement höldérien}
The goal of this section is to study the case where the functions are only assumed to be locally Hölder on $\R^d$.  
We begin with a few reminders on local Hölder spaces.  

For any subset $A \subset \R^d$, we set
\[
\|f\|_{\alpha, A}
= \sup_{\substack{x, y \in A \\ x \neq y}}\;
  \frac{|f(y) - f(x)|}{|y - x|^{\alpha}},
\qquad
\|f\|_{\infty, A}
= \sup_{x \in A}\; |f(x)|,
\]
and we define the seminorm
\begin{equation}
\label{eq : semi-norme fonction hölder}
\|f\|_{\Calpha(A)} = \|f\|_{\infty, A} + \|f\|_{\alpha, A}.
\end{equation}
We denote
$$\Calphaloc(\R^d) = \lbrace f : \R^d \longrightarrow \R : \forall n \in \N, \, \|f\|_{\alpha,\nn} < \infty \rbrace,$$
the space of locally $\alpha$-Hölder functions on $\R^d.$  
This space, endowed with the family of seminorms 
$\{ \|\cdot\|_{\Calpha(\nn)} : n \in \N \}$, is a Fréchet space.

We now recall the general classical method for defining a local function space (it is straightforward to check that this method coincides with the previous definition in the case of Hölder functions).
\begin{defi}
\label{def : espace de Besov locaux}
Let $f \in \D'(\R^d).$ We say that the distribution $f$ belongs to the local Besov space $\Bspqloc(\R^d)$ if
$$ \forall \phi \in \D(\R^d), \qquad f\cdot\phi \in \Bspq(\R^d).$$
\end{defi}

It is easy to check that local Besov spaces can be characterized through the following condition on wavelet coefficients.
\begin{prop}
Let $s \in \R$ and $ p, q \in (0,\infty].$  
A distribution $f \in \D'(\R^d),$ of order at most $r,$ belongs to the space $\Bspqloc(\R^d)$ if, for every integer $n \in \N,$ its wavelet coefficients satisfy
$$\sum_{|k| \le n} |c_k|^p < \infty \quad  \text{and} \qquad 
\sum_{j=0}^{\infty} \left(\sum_{|k2^{-j}| \le n} \sum_{i=1}^{2^d-1} 
 \left| 2^{(s-\frac{d}{p})j}\cijk\right|^p \right)^{\frac{q}{p}} <\infty,$$
with the usual modifications if $p$ or $q$ is infinite.
\end{prop}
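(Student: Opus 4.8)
The plan is to establish the equivalence announced before the statement: $f$ lies in $\Bspqloc(\R^d)$ if and only if the listed conditions hold for every $n$. The argument compares $f$ with finite truncations of its wavelet expansion, using that each $\psiijk$ is supported in the cube $\kNj$ of side $N2^{-j}$, so that a wavelet whose cube meets $\nn$ has support contained in $[-n-N,n+N]^d$. Recall from Definition~\ref{def : espace de Besov locaux} that $f\in\Bspqloc(\R^d)$ means $f\cdot\phi\in\Bspq(\R^d)$ for every $\phi\in\D(\R^d)$; note also that for fixed $n$ the first condition $\sum_{|k|\le n}|\ck|^p<\infty$ is a finite sum and hence automatic, so all the content sits in the second condition, where at each scale only finitely many $k$ survive but all scales $j\ge 0$ are summed.

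For \emph{necessity}, I would fix $n$, choose $\chi\in\D(\R^d)$ with $\chi\equiv 1$ on $[-n-N,n+N]^d$, and use $f\chi\in\Bspq(\R^d)$. Whenever $\kNj$ meets $\nn$ one has $\supp\psiijk\subset\{\chi=1\}$, hence $\cijk(f\chi)=2^{dj}\langle f,\psiijk\rangle=\cijk(f)$, and likewise $\ck(f\chi)=\ck(f)$ for the finitely many $k$ with $\supp\phik$ meeting $\nn$. Restricting the (finite) sums that define $\|f\chi\|_{\Bspq}$ to these indices then reproduces exactly the conditions of the proposition for that $n$; the cases $p=\infty$ or $q=\infty$ are identical.

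For \emph{sufficiency}, I would fix $\phi\in\D(\R^d)$ with $\supp\phi\subset[-n_0,n_0]^d$. Since $f$ has order at most $r$, its wavelet expansion converges to $f$ in $\D'(\R^d)$ (the coefficients $\cijk$ grow at most polynomially in $2^j$ on bounded sets, which suffices); let $f_m$ denote the partial sum over those wavelets whose support meets $[-m,m]^d$. For $m\ge n_0$, every wavelet in the tail $f-f_m$ is supported away from $\supp\phi$, so by continuity of multiplication by $\phi$ on $\D'(\R^d)$ we get $(f-f_m)\phi=0$, i.e. $f\phi=f_m\phi$. On the other hand the hypothesis, applied with $n=m$ (up to enlarging $m$ by $N$ to pass between ``$|k2^{-j}|\le m$'' and ``$\kNj\cap[-m,m]^d\neq\varnothing$'', which changes only finitely many indices per scale), says precisely that $f_m\in\Bspq(\R^d)$. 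The remaining point is to conclude $f_m\phi\in\Bspq(\R^d)$.

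The \textbf{main obstacle} is exactly this last step. I would either invoke the classical pointwise multiplier theorem — multiplication by a function in $\D(\R^d)$ is bounded on $\Bspq(\R^d)$ for all $s\in\R$ and $p,q\in(0,\infty]$, see \citep{Tri1,Tri3} — or, to remain within the paper's wavelet framework, prove the needed bound directly: one writes $\cijk(f_m\phi)=2^{dj}\langle f_m,\phi\psiijk\rangle$, re-expands the smooth, $2^{-j}$-localized function $\phi\psiijk$ in the wavelet basis, and controls the resulting double sum by almost-orthogonality estimates of the type $|\langle\psi^{(i')}_{j',k'},\phi\psiijk\rangle|\lesssim 2^{-|j-j'|(r+d/2)}$ combined with the $O(2^{d(j'-j)_+})$ indices $k'$ that overlap at each scale — routine, but the only place where genuine work is required. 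A minor point also worth checking carefully is the reproduction of $f$ by its wavelet series in $\D'$, which is standard for Daubechies wavelets of smoothness $r$.
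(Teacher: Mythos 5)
The paper offers no proof of this proposition --- it is introduced with ``it is easy to check'' --- so there is no official argument to compare yours against; judged on its own, your plan is sound and correctly isolates where the real work lies. Two remarks. First, the statement only asserts the ``if'' direction, so your necessity half is extra (though correct, and it does justify the word ``characterized'' in the surrounding text). Second, and more substantively, your parenthetical justification of the $\D'$-convergence of the wavelet series is too quick: for a distribution of order $r$ one only gets $|\cijk|\lesssim 2^{(d+r)j}$ on bounded sets, while the vanishing moments give $|\langle \psiijk,\phi\rangle|\lesssim 2^{-(d+r)j}$, so each term is merely $O(1)$ and there are $O(2^{dj})$ relevant terms per scale --- term-by-term summation does not close. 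You must either use the telescoping identity (the partial sums over $j\le J$ are the projections $P_{V_J}f$, and $\langle f,P_{V_J}\phi-\phi\rangle\to 0$ since $\phi\in\C^{\infty}$ and $r$ is taken large enough, per the paper's standing convention), or observe that only the wavelets meeting $\supp\phi$ contribute when pairing with $\phi$, and for those the hypothesis itself supplies the much better bound $|\cijk|\lesssim 2^{(d/p-s)j}$. Once that is repaired, the reduction $f\phi=f_m\phi$ followed by the pointwise-multiplier theorem for $\C^{\infty}$ compactly supported functions on $\Bspq(\R^d)$ (valid for all $s\in\R$ and $p,q\in(0,\infty]$; Triebel, Theorem 2.8.2) finishes the proof, and your support bookkeeping (the shift by $N$ between ``$|k2^{-j}|\le n$'' and ``$\kNj$ meets $\nn$'') is handled correctly. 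A slightly shorter route avoids $f_m$ and the convergence issue altogether: estimate the coefficients of $f\phi$ directly via $\cijk(f\phi)=2^{dj}\langle f,\phi\,\psiijk\rangle$, expand the smooth, $2^{-j}$-localized function $\phi\,\psiijk$ in the wavelet basis (only indices within distance $O(N)$ of $\supp\phi$ occur, so the hypothesis controls the coefficients of $f$ that appear), and conclude with the same almost-orthogonality estimates you describe; mind that with the paper's normalization $\psiijk=\psi^{(i)}(2^j\cdot-k)$ the wavelets are not $L^2$-normalized, so the exponents in those estimates must be adjusted accordingly.
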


For every integer $n \in \N,$ we denote by $\|f\|_{\Bspq(\nn)}$ the seminorm
\begin{equation}
\label{eq : semi-norme Bspq}
\left( \sum_{|k|\le n} |c_k|^p \right)^{\frac{1}{p}} +  \left( \sum_{j=0}^{\infty} \left( \sum_{|k2^{-j}| \le n} \sum_{i=1}^{2^d-1} 
 \left| 2^{(s-\frac{d}{p})j} \cijk \right|^p \right)^{\frac{q}{p}} \right)^{\frac{1}{q}}.
\end{equation}
Let $\gamma \in \R \setminus \N$.  
In particular, by taking $p = q = \infty$, we obtain the following characterization of the space $\Cgammaloc(\R^d)$:
\begin{equation}
\label{eq : distribution localement hölder}
\forall n \in \N, \; \exists C_n > 0 \text{ such that } 
\quad
\sup_{|k| \le n} |c_k| \le C_n
\quad \text{and} \quad
\sup_{j \ge 0} \,
\sup_{\substack{|k 2^{-j}| \le n \\ i \in \{0, \dots, 2^d - 1\}}}
|\cijk| \le C_n.
\end{equation}

We now have all the tools needed to prove Theorem \ref{theo : distribution locale f dg1 ^....^ dgd}. 
Recall that this theorem is more general than Theorem \ref{theo : distribution globale f dg1 ^....^ dgd}. Indeed, let $\M$ denote the mapping defined in Theorem \ref{theo : distribution globale f dg1 ^....^ dgd} and temporarily let $\M_{\loc}$ denote that of Theorem \ref{theo : distribution locale f dg1 ^....^ dgd}. Then the restriction of the mapping $\M_{\loc}$ to the space $\Calpha(\R^d) \times \C^{\beta_1}(\R^d) \times \dots \times \C^{\beta_d}(\R^d)$ coincides with the mapping $\M.$ We therefore choose to give them the same name. 
To prove this theorem, we follow the main steps of the proof of Theorem \ref{theo : distribution globale f dg1 ^....^ dgd} and detail only the arguments that differ from the globally Hölder case.

Let $\alpha, \beta_1, \dots, \beta_d$ be real numbers satisfying condition $\eqref{eq : alpha + beta > d}.$ In what follows, we fix locally Hölder functions
$$f\in \Calpha_{\loc}(\R^d), g^1 \in \C^{\beta_1}_{\loc}(\R^d), \dots, g^d \in \C^{\beta_d}_{\loc}(\R^d).$$

Let $R = [a_1, b_1] \times \dots \times [a_d, b_d]$ be a rectangle in $\R^d.$  By Theorem $\ref{theo : intégrale de Zust},$ it is possible under these assumptions to define Züst’s integral: 
$$\int_{R} \fdgd.$$
We begin by recalling the sewing inequality \eqref{eq : inégalité de couture} in this context. For all $\alpha' \in (0, \alpha]$,  $\beta_1'\in (0, \beta_1]$, $\dots$, $\beta_d'\in (0, \beta_d]$ satisfying $\eqref{eq : alpha + beta > d},$ we have 
\begin{equation}
\label{eq : borne de couture dans le cas localement hölder}
\left| \int_{R} \fdgd - f(a_1, \dots, a_d) \int_{R} \dgd \right| \le C\,  \delta(R)^{\alpha' + \beta'}\, \|f\|_{\alpha', R} \prod_{i=1}^d \|g^i\|_{\beta_i', R},
\end{equation}
where $\beta' = \sum_{i=1}^d \beta_i'$, and  the constant $C$ depends only on $d, \alpha', \beta_1', \dots, \beta_d'.$
Since Lemma \ref{lem : prolongement de l'intégrale quand la fonction vaut 0} and Proposition \ref{prop : fdg forme linéaire sur Calpha} adapt without difficulty to the locally Hölder case, we can still define the distribution $\fdgd$ by \eqref{eq : def de la distribution fdg1^...^dgd} in this setting.
Let us mention that  we can only deduce from \eqref{eq : borne de couture dans le cas localement hölder} the following inequality: for any compact $K \subset \R^d$ and any test function $\phi \in \D(K),$ we have
$$ | \langle \fdgd, \phi \rangle | \le C_K\, \|f\|_{\Calpha(K)} \prod_{i=1}^d \|g_i\|_{\beta_i,K}\; \|\phi\|_{\Calpha}.$$
When the functions $g^1, \dots, g^d$ are locally Hölder, an argument similar to that of Lemma \ref{lem : f dg1 ^...^dgd = fdet(dg) globalement hölder} shows that the distribution $\fdgd$ coincides with the locally essentially bounded function $f\det(\dg).$ We now turn to the study of the regularity of this distribution.

\begin{prop}
\label{prop : régularité de la distribution localement höldérien}
Let $\alpha' \in (0, \alpha]$ and $\beta_1', \dots, \beta_d' \in (0, \beta_1], \dots, (0, \beta_d]$
be real numbers satisfying~\eqref{eq : alpha + beta > d}.  
Set $\gamma' = d - \sum_{i=1}^d \beta_i'.$  
Then the distribution $\fdgd$ belongs to the space $\C^{-\gamma'}_{\loc}(\R^d)$.  
Moreover, there exists a constant $C > 0$ such that, for every $n \in \N$, we have
\[
\|\fdgd\|_{\C^{-\gamma'}(\nn)}
\le C\, \|f\|_{\C^{\alpha'}([-n, n + N]^d)}
   \prod_{i=1}^d \|g^i\|_{\beta_i', [-n, n + N]^d}.
\]
\end{prop}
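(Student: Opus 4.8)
The plan is to follow the proof of Proposition~\ref{prop : régularité de la distribution globalement höldérien} almost verbatim, localising each global Hölder norm to the cube $[-n,n+N]^d$ and keeping careful track of the supports of the wavelets. The object $\fdgd$ has already been defined as a distribution of order at most $1$ through the local analogue of \eqref{eq : def de la distribution fdg1^...^dgd}; to exhibit it as an element of $\C^{-\gamma'}_{\loc}(\R^d)$ it suffices, by the wavelet description of local Besov spaces recalled before \eqref{eq : semi-norme Bspq} (see also \eqref{eq : distribution localement hölder}), to bound, uniformly in $n$, the quantities $\sup_{|k|\le n}|\ck|$ and $\sup_{j\ge 0}\,\sup_{|k2^{-j}|\le n,\,i}2^{-\gamma' j}|\cijk|$ by the right-hand side of the stated inequality.

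The first step is a support/localisation observation. Since $j\ge 0$, the condition $|k2^{-j}|\le n$ forces $\kNj\subset[-n,n+N]^d$, and the condition $|k|\le n$ forces $[k,k+N]\subset[-n,n+N]^d$; this is precisely what dictates the cube occurring in the statement, and it guarantees that every Hölder seminorm over $\kNj$ (resp.\ over $[k,k+N]$) appearing below is dominated by the corresponding seminorm over $[-n,n+N]^d$. Moreover, as $f\in\Calphaloc(\R^d)$ and $g^i\in\C^{\beta_i}_{\loc}(\R^d)$, their restrictions to the compact rectangle $\kNj$ belong to $\Calpha(\kNj)$ and $\C^{\beta_i}(\kNj)$, so Züst's integral over $\kNj$ is well defined and $\cijk=2^{dj}\int_{\kNj}(f\psiijk)\dgd$.

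Next I would reproduce the estimate \eqref{eq : preuve régularité distribution global}. Fix $j,i$ and $k$ with $|k2^{-j}|\le n$, and apply the local sewing inequality \eqref{eq : borne de couture dans le cas localement hölder} on $R=\kNj$ to the tuple $(f\psiijk,g^1,\dots,g^d)$. Since $\psiijk$ vanishes on $\partial(\kNj)$, the boundary term disappears; with $\delta(\kNj)=N2^{-j}$ and $\gamma'=d-\beta'$, where $\beta'=\sum_{i=1}^d\beta_i'$, this gives
$$\big|\cijk\big|\le C\,N^{\alpha'+\beta'}\,2^{(\gamma'-\alpha')j}\,\|f\psiijk\|_{\alpha',R}\prod_{m=1}^d\|g^m\|_{\beta_m',R}.$$
The product rule $\|f\psiijk\|_{\alpha',R}\le\|f\|_{\infty,R}\|\psiijk\|_{\alpha',R}+\|\psiijk\|_{\infty}\|f\|_{\alpha',R}$, combined with $\|\psiijk\|_{\infty}=\|\psi^{(i)}\|_{\infty}$, $\|\psiijk\|_{\alpha',R}\le 2^{\alpha' j}\|\psi^{(i)}\|_{\alpha'}$ and the inclusion $R\subset[-n,n+N]^d$, yields $\|f\psiijk\|_{\alpha',R}\le 2^{\alpha' j}\,\|f\|_{\C^{\alpha'}([-n,n+N]^d)}\,\|\psi^{(i)}\|_{\C^{\alpha'}}$, and hence
$$2^{-\gamma' j}\big|\cijk\big|\le C\,K\,\|f\|_{\C^{\alpha'}([-n,n+N]^d)}\prod_{m=1}^d\|g^m\|_{\beta_m',[-n,n+N]^d},$$
with $K=N^{\alpha'+\beta'}\max_i\bigl(\|\phi\|_{\C^{\alpha'}},\|\psi^{(i)}\|_{\C^{\alpha'}}\bigr)$ as in Proposition~\ref{prop : régularité de la distribution globalement höldérien}. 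The coefficients $\ck$ with $|k|\le n$ are treated identically, now with $R=[k,k+N]$ and no dyadic factor (the case $j=0$), giving the same bound without the prefactor $2^{-\gamma' j}$. Taking the supremum over all admissible $j,k,i$ then produces $\|\fdgd\|_{\C^{-\gamma'}(\nn)}\le 2CK\,\|f\|_{\C^{\alpha'}([-n,n+N]^d)}\prod_{m=1}^d\|g^m\|_{\beta_m',[-n,n+N]^d}$; since $C$ and $K$ do not depend on $n$ and this bound is finite for every $n$, it simultaneously proves $\fdgd\in\C^{-\gamma'}_{\loc}(\R^d)$ and the announced estimate.

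I do not anticipate a genuine difficulty: the argument is purely a localisation of the one carried out in the globally Hölder case, with the sole subtlety being the support bookkeeping — verifying that $|k|\le n$ and $|k2^{-j}|\le n$ confine the supports of $\phi_k$ and $\psiijk$ to $[-n,n+N]^d$, that these wavelets vanish on the boundary of the dyadic cube that carries them (so that the leading term of the sewing inequality drops out), and that, in contrast with the global statement, the constants now legitimately depend on the data only through their restrictions to this slightly enlarged cube.
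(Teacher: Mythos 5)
Your proposal is correct and follows essentially the same route as the paper: the paper's proof likewise reduces to the global argument, noting that $|k2^{-j}|\le n$ forces $\kNj\subset[-n,n+N]^d$, and then reruns the sewing estimate with all Hölder seminorms taken over that enlarged cube. Your additional bookkeeping (the product rule for $\|f\psiijk\|_{\alpha',R}$, the vanishing of the wavelet on $\partial(\kNj)$, the treatment of the $\ck$ via $R=[k,k+N]$) matches what the paper imports verbatim from the global case.
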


\begin{proof}
Since the proof is similar to that of Proposition~\ref{prop : régularité de la distribution globalement höldérien},
we only recall the inequalities that differ.  
Recall the notations
\[
\beta' = \sum_{i=1}^{d} \beta_i'
\quad \text{and} \quad
K = N^{\alpha' + \beta'}
    \max_{i = 1, \dots, 2^d - 1}
    \left(
      \|\phi\|_{\C^{\alpha'}(\R^d)},
      \|\psi^{(i)}\|_{\C^{\alpha'}(\R^d)}
    \right).
\]

Let $n \in \N$.  
We shall use the characterization~\eqref{eq : distribution localement hölder}.  
Let $i \in \{0, \dots, 2^d - 1\}$, $j \in \N$, and $k \in \Z^d$ such that $|k 2^{-j}| \le n$.  
We have the inclusion
\begin{equation}
\label{eq : preuve régularité locale inclusion}
\kNj \subset [-n, n + N]^d.
\end{equation}

Using the same argument as in the proof of Proposition~\ref{prop : régularité de la distribution globalement höldérien}
and the above inclusion of supports, we deduce from the sewing inequality~\eqref{eq : borne de couture dans le cas localement hölder} that
\[
|\cijk|
\le C\, K\, 2^{\gamma' j}
   \|f\|_{\C^{\alpha'}([-n, n + N]^d)}
   \prod_{m=1}^d \|g^m\|_{\beta_m', [-n, n + N]^d},
\]
where the constant $C > 0$ depends only on $d, \alpha', \beta_1', \dots, \beta_d'$.  
A similar reasoning yields the inequality
\[
|\ck| \le C\, K,
\]
which completes the proof of the proposition.
\end{proof}

At this stage, we can define the mapping $\M$ of Theorem~\ref{theo : distribution locale f dg1 ^....^ dgd}:
\[
\begin{array}{rrcl}
\M :  & \Calphaloc(\R^d) \times \C^{\beta_1}_{\loc}(\R^d) \times \dots \times \C^{\beta_d}_{\loc}(\R^d)
& \longrightarrow & \C^{-\gamma}_{\loc}(\R^d)  \\
& (f, g^1, \dots, g^d) & \longmapsto & \fdgd.
\end{array}
\]
We have shown that this mapping satisfies properties~(1) and~(2) of Theorem~\ref{theo : distribution locale f dg1 ^....^ dgd}.  
Moreover, by definition, it coincides with the mapping introduced in the framework of Theorem~\ref{theo : distribution globale f dg1 ^....^ dgd} when the functions $f, g^1, \dots, g^d$ are globally Hölder.  
The following lemma is a local version of the approximation lemma~\ref{lem : lemme d'approximation global}.  
It allows us, using the same arguments as in the globally Hölder case, to prove the uniqueness of the mapping~$\M$.

\begin{lemme}
\label{lem : lemme d'approximation local}
Let $\alpha, \alpha' \in (0,1)$ with $\alpha' < \alpha.$  
For every function $f \in \Calphaloc(\R^d),$ there exists a sequence of functions $f_n : \R^d \rightarrow \R, \; n\in\N,$ satisfying the following properties.
\begin{enumerate}[label=\arabic*.]
\item For every integer $n\in \N^{\ast},$ the function $f_n$ is of class $\C^{\infty}$ on $\R^d,$ and for every compact set $K \subset \R^d,$ we have
$$ \|f_n\|_{\Calpha(K)} \le \|f\|_{\Calpha(K_1)},$$
where $K_1 = K + \overline{\B}(0,1).$
\item For every compact set $K \subset \R^d,$ 
$$ \lim_{n \to \infty} \|f-f_n\|_{\C^{\alpha'}(K)} = 0.$$
\end{enumerate}
\end{lemme}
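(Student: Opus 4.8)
The plan is to mollify and then run the standard interpolation argument, compact by compact. Fix once and for all a function $\rho \in \D(\R^d)$ with $\rho \ge 0$, $\supp \rho \subset \overline{\B}(0,1)$ and $\int_{\R^d}\rho(x)\dx = 1$, and for every integer $n \ge 1$ set $\rho_n(x) = n^d\rho(nx)$ and $f_n = f * \rho_n$. Then $\rho_n \ge 0$, $\int_{\R^d}\rho_n = 1$ and $\supp\rho_n \subset \overline{\B}(0,1/n) \subset \overline{\B}(0,1)$; since a locally $\alpha$-Hölder function is continuous and locally bounded, hence locally integrable, the convolution is well defined, and $f_n \in \C^{\infty}(\R^d)$. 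I claim this single sequence works for every compact set simultaneously.

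\emph{First property.} Fix a compact $K \subset \R^d$ and put $K_1 = K + \overline{\B}(0,1)$. For $x \in K$ one has $f_n(x) = \int_{\overline{\B}(0,1/n)} f(x-y)\,\rho_n(y)\dy$, and $x-y \in K_1$ for each $y$ in the domain of integration; as $\rho_n \ge 0$ and $\int\rho_n = 1$ this yields $\|f_n\|_{\infty,K} \le \|f\|_{\infty,K_1}$. Likewise, for $x,x' \in K$,
\[
|f_n(x) - f_n(x')| \le \int_{\overline{\B}(0,1/n)} |f(x-y) - f(x'-y)|\,\rho_n(y)\dy \le \|f\|_{\alpha,K_1}\,|x - x'|^{\alpha},
\]
since $x - y,\, x' - y \in K_1$; hence $\|f_n\|_{\alpha,K} \le \|f\|_{\alpha,K_1}$, and adding the two estimates gives $\|f_n\|_{\Calpha(K)} \le \|f\|_{\Calpha(K_1)}$.

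\emph{Second property.} Keep $K$ and $K_1$ as above and set $g_n = f - f_n$. Uniform convergence on $K$ is classical: for $x \in K$, $|g_n(x)| \le \int_{\overline{\B}(0,1/n)} |f(x) - f(x-y)|\,\rho_n(y)\dy \le \|f\|_{\alpha,K_1}\,n^{-\alpha} \to 0$. The first property, applied with the exponent $\alpha$, gives $\|g_n\|_{\alpha,K} \le 2\|f\|_{\alpha,K_1}$ for all $n$, so for $x \ne x'$ in $K$
\[
|g_n(x) - g_n(x')| \le \min\!\big(2\|g_n\|_{\infty,K},\ 2\|f\|_{\alpha,K_1}\,|x-x'|^{\alpha}\big).
\]
Splitting the quotient $|g_n(x)-g_n(x')|\,|x-x'|^{-\alpha'}$ according to whether $|x - x'| \le \delta$ or $|x - x'| > \delta$ and using $\alpha' < \alpha$ yields, for every $\delta > 0$,
\[
\|g_n\|_{\alpha',K} \le 2\|f\|_{\alpha,K_1}\,\delta^{\alpha-\alpha'} + 2\,\delta^{-\alpha'}\,\|g_n\|_{\infty,K};
\]
choosing $\delta$ small and then $n$ large makes the right-hand side as small as desired, so $\|g_n\|_{\alpha',K} \to 0$, and together with $\|g_n\|_{\infty,K}\to 0$ this gives $\|f - f_n\|_{\C^{\alpha'}(K)} \to 0$.

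There is no serious obstacle here: this is simply the local counterpart of the argument behind Lemma \ref{lem : lemme d'approximation global}, the passage from the global to the local statement being handled compact by compact with the one sequence $(f_n)$. The only point requiring attention is that the constant in the first property must be exactly $1$ (not merely a universal constant): this forces the mollifier to be nonnegative, of total mass $1$, and supported in $\overline{\B}(0,1/n) \subset \overline{\B}(0,1)$, which is precisely what makes the bound $\|f_n\|_{\Calpha(K)} \le \|f\|_{\Calpha(K_1)}$ hold with the enlarged compact $K_1 = K + \overline{\B}(0,1)$.
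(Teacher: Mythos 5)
Your proof is correct: the mollification $f_n = f * \rho_n$ with a nonnegative, unit-mass kernel supported in $\overline{\B}(0,1/n)$ gives the norm bound with constant exactly $1$ on the enlarged compact $K_1$, and the interpolation between the uniform bound and the $\alpha$-Hölder bound yields convergence in $\C^{\alpha'}(K)$ for every compact $K$ with a single sequence. The paper states this lemma without proof, but your argument is precisely the standard one its global counterpart (Lemma~\ref{lem : lemme d'approximation global}) rests on, transported compact by compact; no gap.
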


We can now define Züst’s integral in this setting.  
Let $\Omega \in \B(\R^d)$ be a bounded Borel set such that the function $\mathbb{1}_{\Omega}$ belongs to the space $\B^{\gamma}_{1,1}(\R^d)$.  

We consider a function $\psi = \psi_{\Omega} \in \D(\R^d)$ satisfying the following properties:
\begin{enumerate}[label=\arabic*.]
\item $\psi = 1$ on a ball $\B_1$ containing $\Omega$;
\item $\psi = 0$ outside a ball $\B_2$ strictly containing $\B_1$.
\end{enumerate}
The distribution $\fdgd \cdot \psi$ then belongs to the space $\C^{-\gamma}(\R^d)$, and we define
\begin{equation}
\label{eq : Int_Omega fdgd = < fdgd, 1_Omega> cas local}
\int_{\Omega} \fdgd = \langle \fdgd \cdot \psi, \mathbb{1}_{\Omega} \rangle,
\end{equation}
which does not depend on the choice of the function~$\psi$.

\bigskip

\noindent\textbf{Acknowledgements.} The author thanks his advisors Thierry Lévy and Lorenzo Zambotti for posing the questions considered in this paper, for many enlightening discussions and for their constant encouragements. The author also thanks Elias Nohra and Stéphane Seuret for helpful remarks on the redaction of this paper.

\nocite{*}
\bibliographystyle{plain}
\bibliography{biblio}
\end{document}